\newcommand{\figref}[1]{{Figure~\ref{#1}}}
\newcommand{\secref}[1]{{Section~\ref{#1}}}
\renewcommand{\eqref}[1]{{(\ref{#1})}}
\newtheorem{assumption}[theorem]{Assumption}
\newcommand{\thmref}[1]{{Theorem~\ref{#1}}}
 \newcommand{\lemref}[1]{{Lemma~\ref{#1}}}
 \newcommand{\assref}[1]{{Assumption~\ref{#1}}}
\newcommand{\propref}[1]{{Proposition~\ref{#1}}}
\newcommand{\rmref}[1]{{Remark ~\ref{#1}}}
\begin{document}

\title{Convergence analysis of the Magnus-Rosenbrock type method for the finite element discretization of semilinear non-autonomous  parabolic PDEs with nonsmooth initial data}
%\thanks{Grants or other notes
%about the article that should go on the front page should be
%placed here. General acknowledgments should be placed at the end of the article.}

%\subtitle{Do you have a subtitle?\\ If so, write it here}

\titlerunning{The Magnus-Rosenbrock type method for non-autonomous  parabolic PDE}        % if too long for running head

\author{Antoine Tambue, Jean Daniel Mukam}
\authorrunning{A. Tambue, J. D. Mukam} % if too long for running head

\institute{A. Tambue (Corresponding author) \at
Department of Computing Mathematics and Physics, Western Norway University of Applied Sciences, Inndelsveien 28, 5063 Bergen, Norway\\
            %Fax: +47-555-89672\\
            Center for Research in Computational and Applied Mechanics (CERECAM), and Department of Mathematics and Applied Mathematics, University of Cape Town, 7701 Rondebosch, South Africa.\\
            The African Institute for Mathematical Sciences(AIMS) of South Africa and Stellenbosh University,\\
             Tel.: +27-785580321\\
   \email{antonio@aims.ac.za, antoine.tambue@vl.no} \\
   \and
   J. D. Mukam \at
           Fakult\"{a}t f\"{u}r Mathematik, Technische Universit\"{a}t Chemnitz, 09126 Chemnitz, Germany.  \\
            Tel.: +49-15213471370 \\
          \email{ jean.d.mukam@aims-senegal.org \\
          \hspace*{1cm}  jean-daniel.mukam@mathematik.tu-chemnitz.de}          
}

\date{Received: date / Accepted: date}
% The correct dates will be entered by the editor

\maketitle

\begin{abstract}
This paper aims  to investigate a full numerical approximation of non-autonomous semilnear parabolic partial differential equations (PDEs) with nonsmooth initial data.
Our main interest is on such PDEs where the nonlinear part  is stronger than the linear part, also called reactive dominated transport equations.
For such equations, many classical  numerical methods lose their stability properties.
We perform  the space and time discretizations  respectively by the finite element method and an exponential integrator. 
We  obtain a novel explicit, stable  and  efficient scheme for such problems called Magnus-Rosenbrock 
method. We prove the convergence of the fully discrete scheme toward the exact solution. The result shows how 
the convergence orders in both space and time depend on the regularity of the initial data. In particular, when the initial data belongs 
to the domain of the family of the linear operator, we achieve convergence orders $\mathcal{O}\left(h^{2}+\Delta t^{2-\epsilon}\right)$, 
for an arbitrarily small $\epsilon>0$. Numerical simulations to illustrate our theoretical result are provided.
\keywords{Non autonomous parabolic partial differential equation \and Magnus integrator\and Rosenbrock-type methods\and Finite element method \and Errors estimate\and Nonsmooth data.}
  \subclass{MSC 65C30  \and MSC 74S05 \and MSC 74S60  }
  
% \PACS{PACS code1 \and PACS code2 \and more}
% \subclass{MSC code1 \and MSC code2 \and more}
\end{abstract}

\section{Introduction}
\label{intro}
We consider the following abstract Cauchy problem with boundary conditions 
\begin{eqnarray}
\label{model}
u'(t)= A(t)u(t)+F(t,u(t)),  \quad u(0)=u_0, \quad t\in(0,T], \quad T>0,
\end{eqnarray}
on the Hilbert  space $H=L^2(\Lambda)$, where $\Lambda$ is an open bounded subset of $\mathbb{R}^d$ $(d=1,2,3)$.  The family of unbounded linear operators $A(t)$
is assumed to generate an analytic semigroup $S_s(t):=e^{A(s)t}$. Suitable assumptions on the nonlinear function $F$ and the linear operator $A(t)$
to ensure the existence of a unique mild solution of \eqref{model} are given in the following section. Equation of type \eqref{model} 
finds applications in many fields such as quantum fields theory, electromagnetism, nuclear physics, see e.g. \cite{Blanes}.
Since analytic solutions of \eqref{model} are usually not available, numerical algorithms are  the only tools to provide good approximations.
Numerical schemes for \eqref{model} with constant linear operator $A(t)=A$ are widely investigated in the scientific 
literature, see e.g. \cite{Stig1,Ostermann5,Hochbruck2,Ostermann3} and the references therein. If we turn our attention 
to the  non-autonomous case, the list of references becomes remarkably short. In the linear case, \eqref{model} has been 
investigated in \cite{Hochbruck1}, where the authors examined the convergence analysis of the Magnus integrator to Schr\"{o}dinger equation.
The Magnus integrator was further investigated in \cite{Ostermann1} for PDE \eqref{model} with $F$ independent of $u$,
where the authors applied the mid-point rule to approximate the Magnus expansion in order to achieve a second order approximation in time.
Numercal scheme for semilinear PDEs \eqref{model} was investigated in \cite{Ostermann4} 
and  the  convergence in time has been proved. In \cite{Ostermann4}, the authors used the backward Euler method.
Although  backward Euler method has good stability properties, it is computationally expensive as  nonlinear systems  need  to be solved at
each time step. Our goal here is to  provide a novel efficient scheme to solve \eqref{model} by upgrading
the  scheme  for linear PDEs in \cite{Ostermann1} and providing a mathematical rigorous convergence proof in space and in time. 
A standard direction to upgrade  the  Magnus integrator  \cite{Ostermann1} 
to semilinear PDEs consists to  keep the linear  structure  of \eqref{model} at each time step.
%In \cite{Mich}, the author combined the second-order Magnus-integrator with the exponential mid-point rule.
%This yields an explicit two steps approximation, which is  second order in time. 
However, when the  linear part  of \eqref{model}  
is stronger than its  nonlinear part, the PDE \eqref{model} is driven by the linear part and the good 
stability properties of a scheme from such approach it is not guaranteed. 
%beside this two steps approximation is usually less efficient as many exponential functions need to be computed 
%at every time step.
Indeed when the   nonlinear part  of a PDE is stronger than its linear part,
the PDE is driven by its nonlinear part.
For such problems, keeping the linear  structure  of \eqref{model} at each step yields  schemes behaving  like the unstable explicit Euler 
method.

In this paper, we propose a novel numerical scheme  by applying the Rosenbrock-Type method 
\cite{Ostermann3,finance,Hochbruck2,Hairer,Antjd2} to the semi-discrete problem \eqref{semi1} 
combining with the Magnus-integrator to the linearized problem. This combination  yields an explicit efficient numerical method for such problems.
The linearization technique weakens the nonlinear part such that the linearized semi-discrete problem is driven by its new linear part.
In contrast to \cite{Ostermann4}, the linearization technique is done at every time step.
%To the best of our knowledge,
Note  that the Rosenbrock method was  investigated in the scientific literature only for autonomous problems, 
see e.g. \cite{Hochbruck2,Antjd2} for deterministic problem and recently  in \cite{Antjd1} 
for stochastic parabolic PDEs to the best of our knowledge. 
Moreover, the convergence analyses  in \cite{Ostermann1,Ostermann2,Ostermann4} are only in time.
% % It is worth to mention that the space approximation via the finite element method was done in \cite{Luskin,Dimitri}, but only for problem of 
% % type \eqref{model} with $F=F(t)$ independent of $u$ and with Dirichlet boundary conditions. In this work we extend the 
% % space approximation in \cite{Luskin,Dimitri} to the case of $F=F(t,u)$ with more general boundary conditions using the finite element method.  
% %  Moreover, the works in \cite{Ostermann4,Mich,Ostermann1,Ostermann2} considered only problems with smooth  initial data.
% %  However, in many applications the initial data are nonsmooth. Typical examples are reaction diffusion problems, option 
% %  pricing models \cite{finance}, etc. To the best of our knowledge, convergence analysis of  numerical methods for \eqref{model}
% %  with nonsmooth data is not yet investigated in the scientific literature. 
% %  
 Furthermore,  we examine the space and time convergence with non smooth initial data where the space discretization 
is performed using the finite element method. 
Comparing with scheme in \cite{Antjd2}, the analysis here is extremely complicated due to the complexity of $A(t)$ and its semigroup $S_s(t)=e^{A(s)t}$.
This complexity is broken through novel rigorous mathematical results obtained in Section \ref{preliminaires}.
Furthermore, in contrast to the scheme  in \cite{Antjd2,Hochbruck},  the new scheme is second order accuracy in time  for non-autonomous PDEs \eqref{model} 
 with constant linear operator $A$  without the extra  matrix  exponential function  $\varphi_2$.
 %Moreover, our convergence analysis is done under more relaxed conditions. In fact, our assumptions are less restrictive than \cite[Assumption 2.4]{Julia1,Julia2}.
Our final convergence result shows how  the convergence orders in both space and time depend on the regularity of the initial data. In particular, when the initial data belongs 
to the domain of the family of the linear operator, we achieve convergence orders $\mathcal{O}\left(h^{2}+\Delta t^{2-\epsilon}\right)$, 
for an arbitrarily small $\epsilon>0$.
 %As we are dealing with  fully discrete scheme, careful estimates need 
 %to be done in order to make the constant appearing in the convergence estimate independent of  the space discretization parameter $h$.
 
  The  paper is organized as follows. In  Section \ref{nummethod},  results about the  well posedness are provided along 
  with the Magnus-Rosenbrock scheme (MAGROS) and the main result. The proof of  the main result  
  is presented in Section \ref{proof1}. In Section \ref{numericalexperiment}, we present some numerical simulations to sustain our theoretical result.
 %  Numerical simulations also  show that  the efficiency of the new scheme compared to the two step scheme developed in \cite{Mich}.

\section{Mathematical setting and numerical method}
 \label{nummethod}
 \subsection{Notations, settings and well posedness}
Let us start by presenting briefly  notations, the main function
spaces and norms that  will be used in this paper. 
We denote by $\Vert \cdot \Vert$ the norm associated to
the inner product $\langle\cdot ,\cdot \rangle_H$ of the Hilbert space $H=L^{2}(\Lambda)$. The norm in the Sobolev space $H^m(\Lambda)$, $ m \geq 0$ will be denoted by
$\Vert. \Vert_m$. For a Hilbert space $U$ we denote by $\Vert\cdot\Vert_{U}$
the norm of $U$,
$L(U, H)$  the set of bounded linear operators  from
$U$ to $H$.  For ease of notation, we use $L(U,U)=:L(U)$.
%In the rest of this paper, we consider $H=L^2(\Lambda)$.

To guarantee the existence of a unique mild solution of \eqref{model}, and for the purpose of the  convergence analysis, we make the following  assumptions.
\begin{assumption}
\label{assumption2}
\begin{itemize}
\item[(i)]
As in \cite{Ostermann1,Ostermann2,Gonz}, we assume that $\mathcal{D}\left(A(t)\right)=D$, $0\leq t\leq T$ and the family of linear operators $A(t) : D\subset H\longrightarrow H$ to be uniformly sectorial on $0\leq t\leq T$, i.e. there exist constants $c>0$ and $\theta\in\left(\frac{1}{2}\pi, \pi\right)$ such that
\begin{eqnarray}
\left\Vert \left(\lambda\mathbf{I}-A(t)\right)^{-1}\right\Vert_{L(L^2(\Lambda))}\leq \frac{c}{\vert \lambda\vert},\quad \lambda\in S_{\theta},
\end{eqnarray}
where $S_{\theta}:=\left\{\lambda\in\mathbb{C} : \lambda=\rho e^{i \phi}, \rho>0, 0\leq \vert \phi\vert\leq \theta\right\}$. As in \cite{Ostermann2}, by a standard scaling argument, we assume $-A(t)$ to be invertible with bounded inverse.
\item[(ii)]
 Similarly to \cite{Ostermann2,Ostermann1,Praha,Gonz}, we require the following Lipschitz conditions:  there exists a positive constant $K_1$ such that
 {\small
\begin{eqnarray}
\label{conditionB}
\left\Vert \left(A(t)-A(s)\right)(-A(0))^{-1}\right\Vert_{L(H)}&\leq& K_1\vert t-s\vert,\quad s,t\in[0, T],\\
\left\Vert  (-A(0))^{-1}\left(A(t)-A(s)\right)\right\Vert_{L(D,H)}&\leq& K_1\vert t-s\vert,\quad s,t\in[0, T].
\end{eqnarray}
}
\item[(iii)]
 Since we are dealing with non smooth data, we follow \cite{Praha} and  assume that 
\begin{eqnarray}
\label{domaine}
\mathcal{D}\left(\left(-A(t)\right)^{\alpha}\right)=\mathcal{D}\left(\left(-A(0)\right)^{\alpha}\right),\quad 0\leq t\leq T,\quad 0\leq \alpha\leq 1
\end{eqnarray}
and there exists a positive constant $K_2$ such that  the following estimate holds uniformly for $t\in[0,T]$
{\small
\begin{eqnarray}
\label{equivnorme1}
K_2^{-1}\left\Vert \left(-A(0)\right)^{\alpha}u\right\Vert\leq \left\Vert (-A(t))^{\alpha}u\right\Vert\leq K_2\left\Vert (-A(0))^{\alpha}u\right\Vert,\quad u\in \mathcal{D}\left(\left(-A(0)\right)^{\alpha}\right).
\end{eqnarray} 
}
\item[(iv)]
Similarly to \cite[(3.17)]{Ostermann2} and \cite{Gonz,Ostermann4}, we assume that the map $t \longmapsto A(t)$ is twice differentiable  and for any $\alpha_1,\alpha_2\in[0,1]$ such that $\alpha_1+\alpha_2=1$,   the following estimates are satisfied
{\small
\begin{eqnarray*}
\Vert (-A(s))^{-\alpha_1}A''(t)A(s)^{-\alpha_2}\Vert_{L\left(\left(-A(0)\right)^{1-\alpha_2}, H\right)}&\leq& C_0,\quad s,t\in[0,T],\\
\Vert (-A(0))^{-\alpha_1}(A(t)-A(s))(-A(0))^{-\alpha_2}\Vert_{L\left(\left(-A(0)\right)^{1-\alpha_2}, H\right)}&\leq& C_0\vert t-s\vert,\quad s,t\in[0,T],
\end{eqnarray*}
}
where $C_0$ is a positive constant independent of $t_1$ and $t_2$. 
\end{itemize}
\end{assumption}
\begin{remark}
\label{remark1}
%Note from \assref{assumption2} (i) the half plane $\{\lambda\in\mathbb{C}: Re \lambda>0\}$ is contained in the resolvent set $\rho(A(t))$ of the generator $A(t)$, $t\in[0, T]$. Therefore from the Gearhart-Pr\"{u}ss theorem \cite[Theorem 1.11, Chapter V, Page 302]{Engel}, it follows that $e^{sA(t)}$ is uniformly exponentially stable, $t\in[0, T]$, i.e. there exist two a postive constants $L_1$ and $\omega_0$ such that
%\begin{eqnarray}
%\label{pruss1}
% \left\Vert e^{sA(t)}\right\Vert_{L(H)}\leq L_1e^{-\omega_0s},\quad t\in[0, T],\quad s\geq 0.
%\end{eqnarray}
From \assref{assumption2} (i) and (iii), it follows that for all $\alpha\geq 0$ and $\delta\in[0,1]$, 
there exists a constant $C_1>0$ such that the following estimates hold uniformly for  $t\in[0,T]$
\begin{eqnarray}
\label{smooth}
\left\Vert (-A(t))^{\alpha}e^{sA(t)}\right\Vert_{L(H)}\leq C_1s^{-\alpha},\quad  \quad s>0,\\
\label{smootha}
 \left\Vert(-A(t))^{-\delta}\left(\mathbf{I}-e^{sA(t)}\right)\right\Vert_{L(H)}\leq C_1s^{\delta}, \quad\quad\quad s\geq 0,
\end{eqnarray}
see e.g. \cite[(2.1)]{Ostermann2}.
\end{remark}
\begin{remark}
\label{remark2}
%Let $\Delta(T):=\left\{(t,s) : 0\leq s\leq t\leq T\right\}$.
%It is well known that under \assref{assumption2} there exists a unique evolution system
%  $U : \Delta(T)\longrightarrow L(H)$ satisfying \cite[Theorem 6.1, Chapter 5, Page 150]{Pazy}.
Let $\Delta(T):=\{(t,s) : 0\leq s\leq t\leq T\}$.
It is well known that \cite[Theorem 6.1, Chapter 5]{Pazy}  under \assref{assumption2} there exists a unique evolution system \cite[Definition 5.3, Chapter 5]{Pazy} $U : \Delta(T)\longrightarrow L(H)$ such that
\begin{itemize}
\item[(i)] There exists a positive constant $K_0$ such that
\begin{eqnarray}
\Vert U(t,s)\Vert_{L(H)}\leq K_0,\quad 0\leq s\leq t\leq T.
\end{eqnarray}
\item[(ii)] $U(.,s)\in C^1(]s,T] ; L(H))$, $0\leq s\leq T$,
\begin{eqnarray}
\frac{\partial U}{\partial t}(t,s)=-A(t)U(t,s), \quad 0\leq s\leq t\leq T, \\
\Vert A(t)U(t,s)\Vert_{L(H)}\leq \frac{K_0}{t-s},\quad 0\leq s<t\leq T.
\end{eqnarray}
\item[(iii)] $U(t,.)v\in C^1([0,t[ ; H)$, $0<t\leq T$, $v\in\mathcal{D}(A(0))$ and 
\begin{eqnarray}
\frac{\partial U}{\partial s}(t,s)v=-U(t,s)A(s)v,\quad 0\leq s\leq t\leq T,\\
 \Vert A(t)U(t,s)A(s)^{-1}\Vert_{L(H)}\leq K_0, \quad 0\leq s\leq t\leq T.
\end{eqnarray}
\end{itemize}
\end{remark}
 We equip $V_{\alpha}(t) : = \mathcal{D}\left(\left(-A(t)\right)^{\alpha/2}\right)$, $\alpha\in \mathbb{R}$ with the norm $\Vert u\Vert_{\alpha,t} := \Vert (-A(t))^{\alpha/2}u\Vert$. Due to \eqref{domaine}-\eqref{equivnorme1} and for the seek of  ease notations, we simply write $V_{\alpha}$ and $\Vert .\Vert_{\alpha}$ instead of $V_{\alpha}(t)$ and $\Vert .\Vert_{\alpha,t}$ respectively.
 
 \begin{assumption}
 \label{assumption1}
The initial data $u_0 : \Lambda\longrightarrow H$ is assumed to  satisfy $u_0\in \mathcal{D}\left(\left(-A(0)\right)^{\beta/2}\right)$, $0\leq \beta\leq 2$.
 \end{assumption}
 
 Similarly to \cite[(8.1.1)]{Lunardi}, \cite{Ostermann4} and \cite[(5.3)]{Stig2}, we make the following assumption on the nonlinear function.
\begin{assumption}
\label{assumption3}
The function $F : [0,T]\times H\longrightarrow H$ is assumed to be twice differentiable with respect to the first and second variables and with bounded partial derivatives, i.e. there exists $K_3\geq 0$ such that for $k=\{1,2\}$ we have
{\small
\begin{eqnarray}
\left\Vert\frac{\partial^2F}{\partial t\partial u}(t,u)\right\Vert_{L(H)}&\leq& K_3,\quad\left\Vert\frac{\partial^kF}{\partial t^k} (t,u)\right\Vert\leq K_3(1+\Vert u\Vert),\quad t\in[0,T],\quad u\in H,\\
 \left\Vert\frac{\partial F}{\partial u} (t,u)\right\Vert_{L(H)}&\leq& K_3,\quad \left\Vert\frac{\partial^2 F}{\partial u^2} (t,u)\right\Vert_{L(H\times H, H)}\leq K_3,\quad t\in[0,T],\quad u\in H.
\end{eqnarray} 
}

Moreover, we assume assume $F'(t, u)$ to be coercive for $t\in[0, T]$ and $u\in H$, i.e.  there exists $\kappa > -b_0$ such that
\begin{eqnarray}
\label{prix}
-\left\langle F'(t,u)v, v\right\rangle_H\geq \kappa \Vert v\Vert^2,\quad t\in[0, T],\quad v, u\in H,\\
b_0= \underset{ t \geq 0} {\inf}  \{ \text{Re} (\lambda (t)),\, \lambda (t) \in \sigma (A(t)) \,\,(\text{spectrum  of }  A(t))\}\,
\end{eqnarray}
where $F'(t,u):=\frac{\partial F}{\partial u}(t,u)$.  We also assume   the nonlinear function  $F$ to satisfy the  Lipschitz condition, i.e. there exists a constant $K_4\geq 0$ such that
\begin{eqnarray}
\label{Lipschitz}
\Vert F(t,u)-F(s,v)\Vert\leq K_4(\vert t-s\vert+\Vert u-v\Vert),\quad s,t\in[0,T],\quad u,v\in H.
\end{eqnarray}
\end{assumption}
Indeed from the coercivity \eqref{ellip2}, we can take  $b_0= \lambda_0$.

% % \begin{remark}
% % Note that \assref{assumption3} is less restrictive than the assumptions made in \cite{Julia1,Julia2} for autonomous problems. 
% % \end{remark}
The following theorem provides the well posedness of problem \eqref{model}.
\begin{theorem}
\label{theorem1}
Let \assref{assumption1}, \assref{assumption2} and \assref{assumption3} be fulfilled. Then the initial value problem \eqref{model} has a unique mild solution $u(t)$ given by
\begin{eqnarray}
\label{mild0}
u(t)=U(t,0)u_0+\int_0^tU(t,s)F(s,u(s))ds,\quad t\in(0,T],
\end{eqnarray}
where $U(t,s)$ is the evolution system defined in \rmref{remark2}. Moreover, the following space regularity holds
\begin{eqnarray}
\label{spacereg1}
\Vert (-A(0))^{\beta/2}u(t)\Vert\leq C\left(1+\Vert (-A(0))^{\beta/2}u_0\Vert\right),\quad \beta\in[0,2),\quad t\in[0,T].
\end{eqnarray}
\begin{proof}
\thmref{theorem1} is an extension of \cite[Chapter 5, Theorem 7.1]{Pazy} to the full semilinear problem.
Its  proof can be done  using arguments based  on a fixed point theorem and the Gronwall's lemma as  of \cite[Chpater 6, Theorem 1.2]{Pazy}.
The proof of \eqref{spacereg1} follows from  the regularities estimates of the evolution parameter $U(t,s)$.
\end{proof}
\end{theorem}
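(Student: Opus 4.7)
The plan is to split the theorem into its two claims, existence/uniqueness of the mild solution and the space regularity bound \eqref{spacereg1}, and handle them in that order.

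For existence and uniqueness, I would set up a standard Picard argument on $C([0,\tau];H)$ for a small $\tau>0$. Define the map
\begin{equation*}
\Phi(u)(t)=U(t,0)u_0+\int_0^t U(t,s)F(s,u(s))\,ds,\qquad t\in[0,\tau].
\end{equation*}
Using the uniform bound $\|U(t,s)\|_{L(H)}\le K_0$ from \rmref{remark2}(i) together with the Lipschitz estimate \eqref{Lipschitz}, one checks that $\Phi$ maps a suitable closed ball into itself and satisfies $\|\Phi(u)-\Phi(v)\|_{C([0,\tau];H)}\le K_0 K_4\tau\,\|u-v\|_{C([0,\tau];H)}$. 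Choosing $\tau$ small enough makes $\Phi$ a strict contraction, and Banach's fixed point theorem provides a unique local mild solution. Since $\tau$ depends only on global constants (not on the size of the initial data or the solution), the standard concatenation/continuation argument extends the solution uniquely to $[0,T]$. Uniqueness on the whole interval then follows from a direct Grönwall argument applied to the difference of two mild solutions using \eqref{Lipschitz} and the bound on $U(t,s)$.

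For the space regularity \eqref{spacereg1}, I would apply $(-A(0))^{\beta/2}$ to \eqref{mild0} and bound each term separately. The first term $(-A(0))^{\beta/2}U(t,0)u_0$ is handled by the smoothing-type property of the evolution system: using the equivalence of fractional power norms from \assref{assumption2}(iii) and the commutator-type estimate $\|A(t)U(t,s)A(s)^{-1}\|_{L(H)}\le K_0$ from \rmref{remark2}(iii), one obtains
\begin{equation*}
\|(-A(0))^{\beta/2}U(t,0)u_0\|\le C\,\|(-A(0))^{\beta/2}u_0\|.
\end{equation*}
For the convolution term, the analogous smoothing property together with the bound \eqref{smooth} adapted to the evolution system yields $\|(-A(0))^{\beta/2}U(t,s)\|_{L(H)}\le C(t-s)^{-\beta/2}$ for $\beta\in[0,2)$. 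Combining this with $\|F(s,u(s))\|\le K_4(1+\|u(s)\|)+\|F(s,0)\|$ from \assref{assumption3} gives
\begin{equation*}
\|(-A(0))^{\beta/2}u(t)\|\le C\|(-A(0))^{\beta/2}u_0\|+C\int_0^t(t-s)^{-\beta/2}\bigl(1+\|u(s)\|\bigr)\,ds.
\end{equation*}
An a priori $L^\infty([0,T];H)$ bound on $u$, obtained from the mild formulation with Grönwall (using only $\|U(t,s)\|_{L(H)}\le K_0$ and \eqref{Lipschitz}), then controls the integrand, and the singular Grönwall lemma (applicable since $\beta/2<1$) yields \eqref{spacereg1}.

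The main obstacle I expect is establishing the smoothing bound $\|(-A(0))^{\beta/2}U(t,s)\|_{L(H)}\le C(t-s)^{-\beta/2}$ in the non-autonomous setting. For a single semigroup this is immediate from \eqref{smooth}, but for the two-parameter evolution family $U(t,s)$ one must combine \eqref{smooth} (applied to the frozen semigroup $e^{(t-s)A(s)}$), the Hölder-in-time bound \eqref{conditionB} on $A(t)-A(s)$, and the domain/norm equivalence \eqref{equivnorme1} to transfer $(-A(0))^{\beta/2}$ to $(-A(t))^{\beta/2}$. This is precisely the delicate analytic step in Pazy's construction of $U(t,s)$, and once it is available the remainder of the argument is a routine Grönwall estimate.
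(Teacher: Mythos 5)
Your proposal is correct and follows essentially the same route the paper indicates: a Banach fixed-point/Picard iteration with Gronwall for existence and uniqueness (the argument of Pazy, Ch.~6, Thm.~1.2, which the authors cite), and the regularity \eqref{spacereg1} deduced from the smoothing and commutator estimates of the evolution system $U(t,s)$ combined with the norm equivalences in \assref{assumption2}(iii). The paper only sketches this; your write-up supplies the details, including the correct identification of the fractional-power smoothing bound for the two-parameter family $U(t,s)$ as the one genuinely delicate step.
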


\subsection{Finite element discretization}
\label{numscheme}
For the seek of simplicity, we assume the family of linear operators $A(t)$ to be of second order and has the following form
\begin{eqnarray}
\label{family}
A(t)u=\sum_{i,j=1}^d\frac{\partial}{\partial x_i}\left(q_{ij}(x,t)\frac{\partial u}{\partial x_j}\right)-\sum_{j=1}^dq_j(x,t)\frac{\partial u}{\partial x_j}.
\end{eqnarray}
We require the coefficients $q_{i,j}$ and $q_j$ to be smooth functions of the variable $x\in\overline{\Lambda}$ and H\"{o}lder-continuous with respect to $t\in[0,T]$. We further assume that there exists a positive constant $c$ such that the following  ellipticity condition holds
\begin{eqnarray}
\label{ellip}
\sum_{i,j=1}^dq_{ij}(x,t)\xi_i\xi_j\geq c\vert \xi\vert^2, \quad (x,t)\in\overline{\Lambda}\times [0,T].
\end{eqnarray}
 Under the above assumptions on $q_{ij}$ and $q_j$, it is  well known  that  the family of linear operators defined by \eqref{family} fulfills  \assref{assumption2} (i)-(ii)  with $D=H^2(\Lambda)\cap H^1_0(\Lambda)$, see \cite[Section 7.6]{Pazy} or \cite[Section 5.2]{Tanabe}. The above assumptions on $q_{ij}$ and $q_j$ also imply that \assref{assumption2} (iii) is fulfilled, see e.g. \cite[Example 6.1]{Praha} or \cite{Amann,Seely}. 
 
 As in \cite{Suzuki,Antonio1}, we introduce two spaces $\mathbb{H}$ and $V$, such that $\mathbb{H}\subset V$,  depending on the boundary conditions for the domain of the operator $-A(t)$ and the corresponding bilinear form. For  Dirichlet  boundary conditions we take 
\begin{eqnarray}
V=\mathbb{H}=H^1_0(\Lambda)=\{v\in H^1(\Lambda) : v=0\quad \text{on}\quad \partial \Lambda\}.
\end{eqnarray}
For Robin  boundary condition and  Neumann  boundary condition, which is a special case of Robin boundary condition ($\alpha_0=0$), we take $V=H^1(\Lambda)$ and
\begin{eqnarray}
\mathbb{H}=\{v\in H^2(\Lambda) : \partial v/\partial v_A+\alpha_0v=0,\quad \text{on}\quad \partial \Lambda\}, \quad \alpha_0\in\mathbb{R}.
\end{eqnarray}
Using  Green's formula and the boundary conditions, we obtain the corresponding bilinear form associated to $-A(t)$  
\begin{eqnarray*}
a(t)(u,v)=\int_{\Lambda}\left(\sum_{i,j=1}^dq_{ij}(x,t)\dfrac{\partial u}{\partial x_i}\dfrac{\partial v}{\partial x_j}+\sum_{i=1}^dq_i(x,t)\dfrac{\partial u}{\partial x_i}v\right)dx, \quad u,v\in V,
\end{eqnarray*}
for Dirichlet boundary conditions and  
\begin{eqnarray*}
a(t)(u,v)=\int_{\Lambda}\left(\sum_{i,j=1}^dq_{ij}(x,t)\dfrac{\partial u}{\partial x_i}\dfrac{\partial v}{\partial x_j}+\sum_{i=1}^dq_i(x,t)\dfrac{\partial u}{\partial x_i}v\right)dx+\int_{\partial\Lambda}\alpha_0uvdx.
\end{eqnarray*}
for Robin  and Neumann boundary conditions. 
Using  G\aa rding's inequality, it holds that there exist two constants $\lambda_0$ and $c_0$ such that
\begin{eqnarray}
a(t)(v,v)\geq \lambda_0\Vert v \Vert^2_{1}-c_0\Vert v\Vert^2, \quad  v\in V,\quad t\in[0,T].
\end{eqnarray}
By adding and subtracting $c_{0}u $ on the right hand side of (\ref{model}), we obtain a new family of linear operators that we still denote by  $A(t)$. Therefore the  new corresponding   bilinear form associated to $-A(t)$ still denoted by $a(t)$ satisfies the following coercivity property
\begin{eqnarray}
\label{ellip2}
a(t)(v,v)\geq \; \lambda_0\Vert v\Vert_{1}^{2},\;\;\;\;\; v \in V,\quad t\in[0,T].
\end{eqnarray}
Note that the expression of the nonlinear term $F$ has changed as we included the term $-c_{0}u$
in a new nonlinear term that we still denote by $F$.

The coercivity property (\ref{ellip2}) implies that $A(t)$ is sectorial on $L^2(\Lambda)$, see e.g. \cite{Stig2}. Therefore   $A(t)$ generates an analytic semigroup   $S_t(s)=e^{s A(t)}$  on $L^{2}(\Lambda)$  such that \cite{Henry}
\begin{eqnarray}
S_t(s)= e^{s A(t)}=\dfrac{1}{2 \pi i}\int_{\mathcal{C}} e^{ s\lambda}(\lambda I - A(t))^{-1}d \lambda,\;\;\;\;\;\;\;
\;s>0,
\end{eqnarray}
where $\mathcal{C}$  denotes a path that surrounds the spectrum of $A(t)$.
The coercivity  property \eqref{ellip2} also implies that $-A(t)$ is a positive operator and its fractional powers are well defined and 
  for any $\alpha>0$ we have
\begin{equation}
\label{fractional}
 \left\{\begin{array}{rcl}
         (-A(t))^{-\alpha} & =& \frac{1}{\Gamma(\alpha)}\displaystyle\int_0^\infty  s^{\alpha-1}{\rm e}^{sA(t)}ds,\\
         (-A(t))^{\alpha} & = & ((-A(t))^{-\alpha})^{-1},
        \end{array}\right.
\end{equation}
where $\Gamma(\alpha)$ is the Gamma function (see \cite{Henry}).  
 The domain  of $(-A(t))^{\alpha/2}$  are  characterized in \cite{Suzuki,Stig1,Stig2}  for $1\leq \alpha\leq 2$  with equivalence of norms as follows.
\begin{eqnarray}
\mathcal{D}((-A(t))^{\alpha/2})&=&H^1_0(\Lambda)\cap H^{\alpha}(\Lambda)\hspace{1cm} 
\text{(for Dirichlet boundary condition)}\nonumber\\
\mathcal{D}(-A(t))&=&\mathbb{H},\quad \mathcal{D}((-A(t))^{1/2})=H^1(\Lambda)\hspace{0.5cm} \text{(for Robin boundary condition)}\nonumber\\
\Vert v\Vert_{H^{\alpha}(\Lambda)}&\equiv& \Vert ((-A(t))^{\alpha/2}v\Vert:=\Vert v\Vert_{\alpha},\quad  v\in \mathcal{D}((-A(t))^{\alpha/2}).\nonumber
\end{eqnarray}
The characterization of $\mathcal{D}((-A(t))^{\alpha/2})$ for $0\leq \alpha<1$ can be found in  \cite[Theorem 2.1 \& Theorem 2.2]{Nambu}.

Let us now move to the space  approximation of  problem \eqref{model}. We start with the discretization  of our domain $\Lambda$ by a finite triangulation.
Let $\mathcal{T}_h$ be a triangulation with maximal length $h$. Let $V_h \subset V$ denotes the space of continuous and piecewise 
linear functions over the triangulation $\mathcal{T}_h$. As in \cite[(1.6)]{Luskin}, we assume that
\begin{eqnarray}
\label{ritz0}
\inf_{\phi_h\in V_h}\Vert v-\phi_h\Vert_j\leq Ch^{r-j}\Vert v\Vert_r,\quad v\in V\cap H^r(\Lambda),\quad r\in\{1,2\},
\end{eqnarray}
for all $j\in\{0,1\}$. Moreover, we assume that
\begin{eqnarray}
\label{ritz0a}
\inf_{\phi_h\in V_h}\Vert v-\phi_h\Vert_2\leq C\Vert v\Vert_2,\quad v\in V\cap H^2(\Lambda).
\end{eqnarray}
 We consider the projection $P_h$  defined  from  $H=L^2(\Lambda)$ to $V_h$ by 
\begin{eqnarray}
\label{discrete1}
(P_hu,\chi)=(u,\chi), \quad  \chi\in V_h,\,  u\in H.
\end{eqnarray}
For all $t\in[0, T]$, the discrete operator $A_h(t) : V_h\longrightarrow V_h$ is defined by 
\begin{eqnarray}
\label{discrete2}
(A_h(t)\phi,\chi)=(A(t)\phi,\chi)=-a(t)(\phi,\chi),\quad  \phi,\chi\in V_h.
\end{eqnarray}
The coercivity property \eqref{ellip2}  implies that there exist two constants $C_2>0$ and $\theta\in(\frac{1}{2}\pi,\pi)$ such that (see e.g. \cite[(2.9)]{Stig2} or \cite{Suzuki,Henry})
 \begin{eqnarray}
 \label{sectorial1}
 \Vert (\lambda\mathbf{I}-A_h(t))^{-1}\Vert_{L(H)}\leq \frac{C_2}{\vert \lambda\vert},\quad \lambda \in S_{\theta}
 \end{eqnarray}
 holds uniformly for $h>0$ and $t\in[0,T]$. The coercivity condition \eqref{ellip2} implies that for any $t\in[0,T]$, $A_h(t)$ generates an analytic semigroup $S^h_t(s):=e^{sA_h(t)}$, $s\in[0,T]$. The coercivity property \eqref{ellip2} also implies that the smooth properties \eqref{smooth}  and \eqref{smootha} hold for $A_h$ uniformly for $h>0$ and $t\in[0,T]$, i.e. for all $\alpha\geq 0$ and $\delta\in[0,1]$, there exists a positive constant $C_3$ such that the following estimates hold uniformly for $h>0$ and $t\in[0,T]$, see e.g. \cite{Suzuki,Henry}
 \begin{eqnarray}
 \label{smooth2}
 \left\Vert(-A_h(t))^{\alpha}e^{sA_h(t)}\right\Vert_{L(H)}&\leq& C_3s^{-\alpha}, \quad s>0, \\
 \label{smooth1}
  \left\Vert (-A_h(t))^{-\delta}\left(\mathbf{I}-e^{sA_h(t)}\right)\right\Vert_{L(H)}&\leq& C_3s^{\delta}, \quad s\geq 0.
 \end{eqnarray}
% Moreover, the semigroup $e^{sA_h(t)}$ is uniformly exponentially stable, i.e.
% \begin{eqnarray}
% \label{pruss2}
%  \left\Vert e^{sA_h(t)}\right\Vert_{L(H)}\leq L_1e^{-\omega_0s},\quad s\geq 0.
% \end{eqnarray}
The semi-discrete in space of problem \eqref{model} consists of  finding $u^h(t)\in V_h$ such that 
\begin{eqnarray}
\label{semi1}
\dfrac{du^h(t)}{dt}=A_h(t)u^h(t)+P_hF(t,u^h(t)), \quad u^h(0)=P_hu_0,\quad t\in(0,T].
\end{eqnarray}
\subsection{Fully discrete scheme and main result}
Throughout this paper, without loss of generality, we use  a  fixed time step  $\Delta t=T/M$, $M\in\mathbb{N}$   and we set  $t_m=m\Delta t$, $m\in \mathbb{N}$. 
The time discretization consists  of  computing the  numerical approximation $u^h_m$ of $u^h(t_m)$ at discrete times $t_m=m\Delta t \in (0,T]$,  $\Delta t >0$, $m=0,\cdots,M$. 
Let us build an explicit scheme, efficient to solve \eqref{model}. 
The method is based on the following linearisation of  \eqref{semi1} at each time step, aiming to weaken the nonlinear part
\begin{eqnarray}
\label{semi}
\dfrac{du^h(t)}{dt}=\left[A_h(t)+J_m^h\right]u^h(t)+a^h_mt+G^h_m(t,u^h(t)), \quad t_m\leq t\leq t_{m+1},
\end{eqnarray}
for $m=0,\cdots, M-1$, 
where the derivatives $J_m^h$ and $a^h_m$  are  respectively the partial derivatives of $F$ at $\left(t_m+\frac{\Delta t}{2}, u^h_m\right)$ with respect to $u$ and $t$, given by
\begin{eqnarray}
\label{remainder1}
J^h_m :=P_h\frac{\partial F}{\partial u}\left(t_m+\frac{\Delta t}{2},u^h_m\right)\quad \text{and}\quad a^h_m:=P_h\frac{\partial F}{\partial t}\left(t_m+\frac{\Delta t}{2}, u^h_m\right)
\end{eqnarray}
and the remainder  $G^h_m$ is given by 
\begin{eqnarray}
\label{remainder2}
 G^h_m(t,u^h(t)) :=P_hF(t,u^h(t))-J_m^hu^h(t)-a^h_mt.
\end{eqnarray}
Note that using \assref{assumption3} the following estimate holds
\begin{eqnarray}
\label{remainder3}
\Vert J^h_mu-J^h_mv\Vert_{L(H)}\leq K_3\Vert u-v\Vert,\quad u, v\in H, \quad h>0,\quad m=0,\cdots,M.
\end{eqnarray}
It follows therefore from \eqref{remainder3}, \eqref{Lipschitz} and \eqref{remainder2} that the remainder $G^h_m$ satisfies the following Lipschitz estimate
\begin{eqnarray}
\label{remainder4}
\Vert G^h_m(t,u)-G^h_m(t,v)\Vert \leq (K_3+K_4)\Vert u-v\Vert,\quad u,v \in H,\quad t\in[0,T].
\end{eqnarray}
 Applying the  exponential-like Euler and Midpoint integrators \cite{LEM} to \eqref{semi} gives the following numerical scheme, called Magnus-Rosenbrock method (MAGROS)
 {\small
\begin{eqnarray}
\label{erem}
u^h_{m+1}&=& e^{\Delta t\left(A_{h,m}+J^h_m\right)}u^h_m+\Delta t\varphi_1\left(\Delta t(A_{h,m}+J^h_m)\right)a^h_m\left(t_m+\frac{\Delta t}{2}\right)\nonumber\\
&+&\Delta t\varphi_1\left(\Delta t(A_{h,m}+J^h_m)\right)G^h_m\left(t_m+\frac{\Delta t}{2},u^h_m\right),\quad m=0,\cdots,M-1,
\end{eqnarray}
}
where the linear  operator $A_{h,m}$ is given by
%\eqref{magmich1}.
\begin{eqnarray}
\label{defA}
A_{h,m}:=A_h\left(t_m+\frac{\Delta t}{2}\right)
\end{eqnarray} 
and the linear function $\varphi_1$ is given by
\begin{eqnarray}
\label{phi1}
\quad\varphi_1\left(\Delta t\left(A_{h,m}+J^h_m\right)\right)&:=&\frac{1}{\Delta t}\int_0^{\Delta t}e^{\left(A_{h,m}+J^h_m\right)(\Delta t-s)}ds. 
\end{eqnarray}
%and the function  $\varphi_2$ is given by
%\begin{eqnarray}
%\label{phi}
%\quad\varphi_2\left(\Delta t\left(A_{h,m}+J^h_m\right)\right)&:=&\frac{1}{\Delta t^2}\int_0^{\Delta t}e^{\left(A_{h,m}+J^h_m\right)(\Delta t-s)}sds. 
%\end{eqnarray}
%The functions $\varphi_1$ and $\varphi_2$ satisfy the following relation
%\begin{eqnarray}
%\label{relation}
%\varphi_2(z)=\frac{\varphi_1(z)-1}{z}.
%\end{eqnarray}
Note that the numerical scheme \eqref{erem} can be written in the following form, efficient for simulation
\begin{eqnarray}
\label{erem1}
u^h_{m+1}=u^h_m+\Delta t\varphi_1\left(\Delta t(A_{h,m}+J^h_m)\right)\left[A_{h,m}u^h_m+P_hF\left(t_m+\frac{\Delta t}{2},u^h_m\right)\right].
\end{eqnarray}
The numerical scheme \eqref{erem} can also be written in the following integral form, useful for the error analysis
\begin{eqnarray}
\label{semi2}
u^h_{m+1}&=&e^{\Delta t\left(A_{h,m}+J^h_m\right)}u^h_m+\int_0^{\Delta t}e^{\left(A_{h,m}+J^h_m\right)(\Delta t-s)}a^h_m\left(t_m+\frac{\Delta t}{2}\right)ds\nonumber\\
&+&\int_0^{\Delta t}e^{\left(A_{h,m}+J^h_m\right)(\Delta t-s)}G^h_m\left(t_m+\frac{\Delta t}{2},u^h_m\right)ds.
\end{eqnarray}
 We will need the following further assumption on the nonlinearity, useful to achieve full convergence order $2$  
 in space without any logarithmic perturbation  when $u_0\in \mathcal{D}(-A(0))$. This assumption was also used in \cite[Remark 2.9]{Antonio1}.
\begin{assumption}
\label{assumption4}
We assume that $ F:[0,T]\times H\longrightarrow H$ satisfies the following estimate
\begin{eqnarray}
\Vert (-A(s))^{\gamma}F(t, u(r))\Vert \leq C(\gamma)\left(1+\Vert (-A(s))^{\gamma}u(r)\Vert\right),\quad s,r,t\in[0,T],
\end{eqnarray}
for any $\gamma>0$ small enough.
\end{assumption}
%Similarly to \cite[Assumption 2.4]{Jentzen} for stochastic problems and \cite[Assumption 1]{Julia1,Julia2} for deterministic problems,  we require the nonlinearity to satisfy the following additional condition.
%\begin{assumption}
%\label{assumption5}
%We assume that for any $\gamma\in[0, 1]$ the following estimate holds
%\begin{eqnarray}
%\left\Vert (-A(0))^{-\gamma}\frac{\partial F}{\partial u}(t,u)(-A(0))^{\gamma}\right\Vert_{L(H)}\leq C,\quad t\in[0,T],\quad u\in H.
%\end{eqnarray}
%\end{assumption}
%Note that \assref{assumption5} is less restrictive than \cite[Assumption 2.4]{Jentzen} and \cite[Assumption 1]{Julia1,Julia2}. This is due to the fact that $\mathcal{D}((-A(0))^{\gamma})\hookrightarrow H$ (continuous embedding). In fact nonlinear functions such as $F(v)=\frac{(1-v)}{(1+v^2)}$ ($v\in H$) does not fulfil \cite[Assumption 2.4]{Jentzen} and \cite[Assumption 1]{Julia1,Julia2}, see e.g. the discussion in the introduction of \cite{Jentzen1}

We can now state our convergence result, which is in fact the main result of this paper.
\begin{theorem}\textbf{[Main result]}
\label{mainresult1}
Let \assref{assumption2}, \assref{assumption1} and  \assref{assumption3}  be fulfilled.
\begin{itemize}
\item[(i)] If $0<\beta<2$, then the following error estimate holds
\begin{eqnarray}
\Vert u(t_m)-u^h_m\Vert\leq C\left(h^{\beta}+\Delta t^{1+\beta/2-\epsilon}\right),
\end{eqnarray}
where $\epsilon>0$ is a positive constant small enough.  
\item[(ii)]
If $\beta=2$, then the following error estimate holds
\begin{eqnarray}
\Vert u(t_m)-u^h_m\Vert\leq C\left(h^{2}\left(1+\max\left(0, \ln(t_m/h)\right)\right)+\Delta t^{2-\epsilon}\right).
\end{eqnarray}
\item[(iii)]
If $\beta=2$ and moreover if \assref{assumption4} is fulfilled then the following error estimate holds
\begin{eqnarray}
\Vert u(t_m)-u^h_m\Vert\leq C\left(h^{2}+\Delta t^{2-\epsilon}\right).
\end{eqnarray}
\end{itemize}
\end{theorem}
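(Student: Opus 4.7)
The overall plan is to split the full error through the semi-discrete solution $u^h(t_m)$ of \eqref{semi1}, writing
\[\Vert u(t_m)-u^h_m\Vert \;\le\; \Vert u(t_m)-u^h(t_m)\Vert + \Vert u^h(t_m)-u^h_m\Vert,\]
and to treat the spatial and temporal halves separately. The spatial term is controlled by a Ritz-projection argument for the non-autonomous parabolic finite element semi-discretisation, built on \eqref{ritz0}--\eqref{ritz0a}, the sectorial smoothing \eqref{smooth}--\eqref{smootha}, and the mild-solution regularity \eqref{spacereg1}. The three regimes in the statement reflect three standard situations: for $0<\beta<2$ one gains $h^{\beta}$ by exploiting the partial regularity $u(t)\in V_{\beta}$; at $\beta=2$ a single factor $\ln(t_m/h)$ arises when integrating the $t^{-1}$ blow-up of $(-A(t))e^{sA(t)}$ against data of full regularity; and under the extra \assref{assumption4} one can absorb $F(s,u(s))$ into a negative fractional power to erase the logarithm and produce the clean $h^{2}$ bound.

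For the temporal error I would derive a variation-of-constants representation that mirrors the scheme \eqref{semi2}. Rewriting \eqref{semi1} on $[t_m,t_{m+1}]$ through the splitting
\[\tfrac{d u^h}{dt}(t)=(A_{h,m}+J^h_m)u^h(t)+\bigl(A_h(t)-A_{h,m}\bigr)u^h(t)+a^h_m t+G^h_m(t,u^h(t)),\]
Duhamel's formula for the perturbed generator $A_{h,m}+J^h_m$ yields an integral identity for $u^h(t_{m+1})$ whose first three terms coincide with $u^h_{m+1}$ in \eqref{semi2}, and whose residual defines a local truncation error $L_m$ split into three pieces. Piece (a) is the frozen-operator contribution $(A_h(t_m+s)-A_{h,m})u^h(t_m+s)$, which I would Taylor-expand to first order in $s$ around $\Delta t/2$ using \assref{assumption2}(iv); the linear part, after factoring out the value at $s=\Delta t/2$, integrates to zero because $\int_0^{\Delta t}(s-\Delta t/2)\,ds=0$, leaving only a quadratic remainder. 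Piece (b) is the affine-in-time term $a^h_m(s-\Delta t/2)$, which enjoys the same midpoint cancellation by construction. Piece (c) is the linearisation remainder $G^h_m(t_m+s,u^h(t_m+s))-G^h_m(t_m+\Delta t/2,u^h_m)$; since $a^h_m$ and $J^h_m$ were chosen exactly as the partial derivatives of $F$ at $(t_m+\Delta t/2,u^h_m)$, the first derivatives of $G^h_m$ vanish there, and a second-order Taylor expansion combined with \assref{assumption3} gives a quadratic estimate in $(s-\Delta t/2)$ and $\Vert u^h(t_m+s)-u^h_m\Vert$.

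Bounding each contribution in $H$ via the sectorial smoothing of the perturbed generator, which remains sectorial uniformly in $h$ and $m$ because $J^h_m$ is a $K_3$-bounded perturbation of $A_{h,m}$ by \eqref{remainder3}, and inserting the regularity of $u^h$ inherited from \eqref{spacereg1}, one obtains a local order $\Delta t^{2+\beta/2-\epsilon}$ for $0<\beta<2$ and $\Delta t^{3-\epsilon}$ for $\beta=2$. The recursion $e^h_{m+1}=e^{\Delta t(A_{h,m}+J^h_m)}e^h_m+L_m$, with $e^h_m:=u^h(t_m)-u^h_m$, is then closed by a discrete Gronwall argument in the spirit of \cite{Antjd2}, summing the $M=T/\Delta t$ local errors into the announced global orders $\Delta t^{1+\beta/2-\epsilon}$ and $\Delta t^{2-\epsilon}$.

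The main obstacle, in my view, is twofold. First, the smoothing estimates needed throughout must be established for the perturbed semigroup $e^{s(A_{h,m}+J^h_m)}$ together with its $\varphi_1$-companion and shown to hold uniformly in $h$, $m$ and $\Delta t$; this is precisely the purpose of the preparatory lemmas in Section \ref{preliminaires} and requires careful control of the interaction between the bounded perturbation $J^h_m$ and fractional powers of $-A_{h,m}$. Second, the negative-fractional-power norm of $u^h(t_m+s)$ near $t=0$ degenerates for rough initial data, and this is where the $\epsilon$-loss is born: it originates in a $\ln(1/\Delta t)$ factor produced when integrating the singular kernels $s^{-\alpha}$ coming from \eqref{smooth2} against the only regularity guaranteed by \eqref{spacereg1}, and it is this logarithmic deficit that forces the arbitrarily small $\epsilon$ in the final bounds.
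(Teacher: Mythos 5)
Your overall architecture matches the paper's: split the error at $u^h(t_m)$, control the spatial half via the Ritz projection and the nonsmooth-data estimate for $T_h(t,\tau)=U(t,\tau)-U_h(t,\tau)P_h$ (\lemref{spaceerrorlemma}, \thmref{proposition2}), and control the temporal half by a Duhamel representation with the frozen linearized generator $A_{h,m}+J^h_m$, a midpoint Taylor expansion of the residual, and a discrete Gronwall closure. However, your key cancellation step is wrong as stated. The linear term of the Taylor expansion does \emph{not} integrate to zero: the defect is $\delta^h_{m+1}=\int_0^{\Delta t}e^{(A_{h,m}+J^h_m)(\Delta t-s)}L^h_m(t_m+s)\,ds$, so the factor $(s-\Delta t/2)(L^h_m)'(t_m+\Delta t/2)$ is weighted by the $s$-dependent propagator $e^{(A_{h,m}+J^h_m)(\Delta t-s)}$, and $\int_0^{\Delta t}e^{(A_{h,m}+J^h_m)(\Delta t-s)}(s-\Delta t/2)\,ds=\Delta t^2\bigl(\varphi_2-\tfrac12\varphi_1\bigr)\bigl(\Delta t(A_{h,m}+J^h_m)\bigr)\neq 0$. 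The paper's argument hinges on the identity $\varphi_2(z)-\tfrac12\varphi_1(z)=z\chi(z)$ with $\chi$ bounded (\eqref{Oster}--\eqref{gonza2}), which converts this term into $\Delta t^3(A_{h,m}+J^h_m)\chi(\cdot)(L^h_m)'(t_m+\Delta t/2)$; the gain of one power of $\Delta t$ is only realized after the unbounded factor $(A_{h,m}+J^h_m)$ is absorbed by the smoothing of the propagator products $S^h_{m-1,k+1}$ composed with fractional powers (\lemref{stabilite}) and by the negative-fractional-power bounds $\Vert(-(A_{h,k}+J^h_k))^{-\epsilon}(L^h_k)'(t_k+\Delta t/2)\Vert\leq Ct_k^{-1+\beta/2}$ of \lemref{utilemma}. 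Without this mechanism your local error estimate loses a full order.

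Two further points. First, your claim that the $G^h_m$-piece is quadratic because the first derivatives of $G^h_m$ vanish at the linearization point is too strong: the expansion point in the solution variable is $u^h_m$ (respectively $u^h(t_m)$), not $u^h(t_m+s)$, so $(L^h_k)'$ does not vanish at the midpoint; the paper only shows it is $O(t_k^{-1+\beta/2})$ in a weak norm, and the order gain comes from the $\varphi$-identity above, not from cancellation of $(L^h_k)'$. Second, the local errors are not uniformly $O(\Delta t^{2+\beta/2-\epsilon})$ in $k$ for rough data: they carry singular weights $t_k^{-1+\beta/2}$ and $t_k^{-2+\beta/2}$ (from \lemref{ancien} and \lemref{utilemma}), so the summation must be done with the discrete convolution estimate of \lemref{lemmastig} rather than by multiplying a uniform local bound by $M$. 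The $\epsilon$-loss indeed originates where you place it, namely in the impossibility of taking the full power $\gamma=1$ in the stability estimate of \lemref{stabilite} and in $\Vert\varphi_1(\cdot)(-(A_{h,k}+J^h_k))^{1+\epsilon/2}\Vert\lesssim\Delta t^{-1-\epsilon/2}$.
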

\begin{remark}
\thmref{mainresult1} extends the result in \cite{Ostermann1} to a fully semilinear problem with nonsmooth initial data. 
Note that the linearisation technique  allows to achieve convergence order almost $2$ when $u_0\in\mathcal{D}(-A(0))$. 
%The convergence is not expected without a linearisation technique as we are dealing with fully nonlinear problem with non smooth data. 
\end{remark}

\section{Proof of the main result}
\label{proof1}
\subsection{Preliminaries results}
\label{preliminaires}

The following lemma will be useful in our convergence proof.
\begin{lemma}
\label{lemma0}
Let \assref{assumption2} be fulfilled. Then for any $\gamma\in[0,1]$ the following estimates hold
{\small
\begin{eqnarray}
\label{equidiscrete1}
K^{-1}\Vert (-(A_h(0))^{-\gamma}v\Vert&\leq& \Vert ((-A_h(t))^{-\gamma}v\Vert\leq K\Vert ((-A_h(0))^{-\gamma}v\Vert,\quad v\in V_h,\\
\label{equidiscrete2}
K^{-1}\Vert (-(A_h(0))^{\gamma}v\Vert&\leq& \Vert ((-A_h(t))^{\gamma}v\Vert\leq K\Vert ((A_h(0))^{\gamma}v\Vert,\quad v\in V_h,
\end{eqnarray}
}
 uniformly in $h>0$ and $t\in[0,T]$, where $K$ is a positive constant independent of $t$ and $h$.
\end{lemma}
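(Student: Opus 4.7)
My plan is to reduce \lemref{lemma0} to the continuous equivalence \eqref{equivnorme1} (\assref{assumption2} (iii)) by working at the Galerkin level, handling the endpoints $\gamma \in \{0, 1/2, 1\}$ separately, and closing the gap via interpolation. Both \eqref{equidiscrete1} and \eqref{equidiscrete2} reduce to one another: substituting $v \mapsto (-A_h(0))^{\gamma} v$ or $v \mapsto (-A_h(t))^{\gamma} v$ converts each inequality into its counterpart, so it suffices to prove uniform boundedness of $(-A_h(t))^{\gamma}(-A_h(0))^{-\gamma}$ in $L(H)$, and the lower bound follows by swapping the roles of $0$ and $t$.

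The cases $\gamma = 0$ and $\gamma = 1/2$ are immediate. For $\gamma = 1/2$, the Galerkin definition \eqref{discrete2} yields, for every $v \in V_h$,
\[
\Vert (-A_h(t))^{1/2} v\Vert^{2} = -(A_h(t)v,v) = a(t)(v,v),
\]
and the coefficients $q_{ij}, q_j$ of \eqref{family} are smooth in $x$ and H\"older-continuous in $t$, so the bilinear forms $a(t)$ and $a(0)$ are equivalent on $V \times V$ with constants independent of $t$; this transfers to $V_h \subset V$ and yields the $\gamma = 1/2$ equivalence uniformly in $h$ and $t$.

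For the critical case $\gamma = 1$, I would establish $\Vert A_h(t) A_h(0)^{-1}\Vert_{L(H)} \leq K$ uniformly in $h, t$. Writing $A_h(t) A_h(0)^{-1} = \mathbf{I} + (A_h(t) - A_h(0)) A_h(0)^{-1}$, fixing $v \in V_h$, and setting $w := -A_h(0)^{-1} v \in V_h$, we have
\[
((A_h(t) - A_h(0)) w, \chi) = -\bigl(a(t) - a(0)\bigr)(w, \chi), \quad \chi \in V_h,
\]
and the right-hand side is bounded by $C|t|\, \Vert A_h(0) w\Vert\, \Vert \chi \Vert = C|t|\, \Vert v\Vert\, \Vert \chi \Vert$ via the Lipschitz-in-$t$ structure of the coefficients, a discrete analog of \assref{assumption2} (iv); taking the supremum over $\chi \in V_h$ with $\Vert \chi \Vert = 1$ concludes. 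With uniform bounds at $\gamma \in \{0, 1/2, 1\}$, the intermediate range follows by interpolation for fractional powers of the uniformly sectorial family $\{-A_h(t)\}$ (cf. \eqref{sectorial1}).

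The main obstacle is the $\gamma = 1$ step. Although the continuous statement \assref{assumption2} (iv) would seem to apply, one cannot invoke it directly because $w = A_h(0)^{-1} v \in V_h$ is piecewise linear and hence does not lie in $D(A(0)) = H^{2}(\Lambda) \cap V$. The workaround is to stay entirely at the level of the bilinear form: the difference $(a(t) - a(0))(w, \chi)$ only involves first-order derivatives of $w$ and $\chi$, so the estimate closes using the uniform discrete elliptic regularity $\Vert w \Vert_1 \leq C \Vert A_h(0) w \Vert$ that follows from \eqref{ellip2} restricted to $V_h$, bypassing any need for $H^2$-control of $w$.
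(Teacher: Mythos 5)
Your overall architecture (endpoint cases plus interpolation) is reasonable, but the step you yourself identify as the main obstacle does not close, and the reduction you use to dispose of \eqref{equidiscrete1} is invalid. For $\gamma=1$: with $w=-A_h(0)^{-1}v$ you must bound $\sup\{\vert(a(t)-a(0))(w,\chi)\vert:\chi\in V_h,\ \Vert\chi\Vert=1\}$, and the leading part of $(a(t)-a(0))(w,\chi)$ is $\int\sum(q_{ij}(\cdot,t)-q_{ij}(\cdot,0))\partial_iw\,\partial_j\chi$, which carries a first derivative of the \emph{test} function. Cauchy--Schwarz gives at best $C\vert t\vert\,\Vert w\Vert_1\Vert\chi\Vert_1$, and $\Vert\chi\Vert_1$ is not controlled by $\Vert\chi\Vert$ uniformly in $h$ (the inverse inequality costs $h^{-1}$). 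Your workaround $\Vert w\Vert_1\leq C\Vert A_h(0)w\Vert$ acts on the wrong argument: the obstruction is the derivative on $\chi$, which can only be removed by integration by parts, and that produces second derivatives of the piecewise linear $w$, which do not exist in $L^2$. The paper's \lemref{lemma0a} circumvents exactly this by using $A_h(r)R_h(r)=P_hA(r)$ to rewrite $(-A_h(r))^{-1}u^h=R_h(r)(-A(r))^{-1}u^h$ as the Ritz projection of a genuine $H^2$ function, so that the continuous hypothesis \assref{assumption2}~(ii) and the $H^2$-stability \eqref{ritz0b} of $R_h$ can be invoked, with the companion bound \eqref{ref2} obtained by a separate duality argument against $(-A_h^*(r))^{-1}$.

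The second gap is the claim that \eqref{equidiscrete1} and \eqref{equidiscrete2} reduce to one another by substitution. The substitutions produce the two operator bounds $\Vert(-A_h(t))^{-\gamma}(-A_h(0))^{\gamma}\Vert_{L(H)}\leq K$ and $\Vert(-A_h(t))^{\gamma}(-A_h(0))^{-\gamma}\Vert_{L(H)}\leq K$, which are products of the same factors in opposite order; since $A_h(t)$ and $A_h(0)$ neither commute nor are self-adjoint (the family \eqref{family} contains the convection terms $q_j\partial/\partial x_j$), neither bound follows from the other or from swapping $0$ and $t$. This is precisely why the paper proves \eqref{equidiscrete1} separately, dualizing against the adjoint family $A_h^*(t)$ (which satisfies the same hypotheses) through the formula $\Vert(-A_h(t))^{-1/2}v\Vert=\sup_{w_h\in V_h}\vert\langle v,w_h\rangle_H\vert/\Vert(-A_h^*(t))^{1/2}w_h\Vert$, and disposes of \eqref{equidiscrete2} by the argument of \cite[Lemma 1]{Antjd1}. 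The same non-self-adjointness invalidates your $\gamma=1/2$ identity: $\Vert(-A_h(t))^{1/2}v\Vert^2=\langle(-A_h^*(t))^{1/2}(-A_h(t))^{1/2}v,v\rangle_H\neq a(t)(v,v)$ in general, so one must instead pass through the square-root norm equivalence $\Vert(-A_h(t))^{1/2}v\Vert\approx\Vert v\Vert_{H^1(\Lambda)}$ as in \eqref{adjoint2ad}. Each of these defects is repairable, but the repairs are essentially the paper's proof.
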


\begin{proof}
We only prove \eqref{equidiscrete1} since the proof of \eqref{equidiscrete2} is similar to \cite[Lemma 1]{Antjd1} by using \assref{assumption2} (iii). 
 For relatively smooth coefficients ($q_j\in C^1(\Lambda)$), the formal adjoint of $A(t)$ denoted by $A^*(t)$ is given by (see e.g. \cite[Section 6.2.3]{Evans})
 {\small
 \begin{eqnarray}
 \label{adjoint1}
 A^*(t)=\sum_{i,j=1}^d\frac{\partial}{\partial x_j}\left(q_{ij}(x,t)\frac{\partial}{\partial x_i}\right)+\sum_{j=1}^dq_j(x,t)\frac{\partial}{\partial x_j}+\left(\sum_{j=1}^d\frac{\partial q_j}{\partial x_j}(x,t)\right)\mathbf{I},
 \end{eqnarray}
 }
 for any $t\in[0,T]$.
 It follows therefore from \eqref{adjoint1} that $\mathcal{D}(-A^*(t))=\mathcal{D}(-A(t))$ for all $t\in[0,T]$.
%  If we assume $\frac{\partial q_j}{\partial x_j}$, $j=1,\cdots d$ to be H\"{o}lder continuous with respect to $t$, 
It also follows from \eqref{adjoint1}  that  the coefficients of $A^*(t)$ satisfy the same assumptions as that of $A(t)$.  Therefore  from \cite[Example 6.1]{Praha} or \cite{Amann,Seely} it holds that $A^*(t)$ satisfies  \assref{assumption2} (iii). More precisely, for all $\alpha\in[0,1]$ and $t\in[0,1]$,  $\mathcal{D}((-A^*(t))^{\alpha})=\mathcal{D}((-A^*(0))^{\alpha})$  and for all $v\in\mathcal{D}((-A^*(0))^{\alpha})$ it holds that
\begin{eqnarray}
 \label{adjoint2}
 C^{-1}\Vert (-A^*(0))^{\alpha}v\Vert\leq \Vert (-A^*(t))^{\alpha}v\Vert\leq C\Vert (-A^*(0))^{\alpha}v\Vert,\quad t\in[0,T].
 \end{eqnarray} 
 Note that for all $t\in[0,T]$,  $(A^*(t))_h=A_h^*(t)$, where $(A^*(t))_h$ stands for the discrete operator associated to $A^*(t)$ and $A_h^*(t)$ is the adjoint of $A_h(t)$. Indeed using \eqref{discrete2}, it holds that
 \begin{eqnarray}
 \label{adjoint2b}
 \langle (A^*(t))_hv,\chi\rangle_H&=&\langle A^*(t)v,\chi\rangle_H=\langle v, A(t)\chi\rangle_H=\langle A(t)\chi, v\rangle_H\nonumber\\
 &=&\langle A_h(t)\chi,v\rangle_H=\langle \chi, A_h^*(t)v\rangle_H\nonumber\\
 &=&\langle A_h^*(t)v,\chi\rangle_H,\quad \chi, v\in V_h, \quad t\in[0,T],
 \end{eqnarray}
 and therefore  $(A^*(t))_h=A_h^*(t)$ for all $t\in[0,T]$.  Let us recall the following  equivalence of norms \cite[(2.12)]{Stig2}, where we replace  $A$ by $A^*(t)$
\begin{eqnarray}
\label{adjoint2ad}
\Vert (-A_h^*(t))^{1/2}v\Vert\approx \Vert(- A^*(t))^{1/2}v\Vert,\quad v\in V_h, \quad t\in[0,T].
\end{eqnarray}
Using \eqref{adjoint2} and \eqref{adjoint2ad}  it holds that there exists a positive constant $K$ such that 
\begin{eqnarray}
\label{equisiam1}
K^{-1}\Vert (-(A_h^*(0))^{1/2}v\Vert\leq \Vert ((-A_h^*(t))^{1/2}v\Vert\leq K\Vert ((-A_h^*(0))^{1/2}v\Vert,
\end{eqnarray}
for any $t\in[0,T]$ and $v\in V_h$.
Following closely \cite{Stig2} or \cite[(3.7)]{Stig3}, it holds that
\begin{eqnarray}
\label{adjoint3} 
\Vert(- A_h(t))^{-1/2}v\Vert&=&\sup_{v_h\in V_h}\frac{\vert\langle (-A_h(t))^{-1/2}v,v_h\rangle_H\vert}{\Vert v_h\Vert}\nonumber\\
&=&\sup_{v_h\in V_h}\frac{\vert \langle v, (-A_h^*(t))^{-1/2}v_h\rangle_H\vert}{\Vert v_h\Vert}\nonumber\\
&=&\sup_{w_h\in V_h}\frac{\vert \langle v, w_h\rangle_H\vert}{\Vert (-A_h^*(t))^{1/2}w_h\Vert},\quad v\in V_h.
\end{eqnarray}
Using \eqref{equisiam1} yields
\begin{eqnarray}
\label{eclair1}
\sup_{w_h\in V_h}\frac{\vert \langle v, w_h\rangle_H\vert}{K\Vert  (-A_h^*(0))^{1/2}w_h\Vert}&\leq& \sup_{w_h\in V_h}\frac{\vert \langle v, w_h\rangle_H\vert}{\Vert (-A_h^*(t))^{1/2}w_h\Vert}\nonumber\\
&\leq& K\sup_{w_h\in V_h}\frac{\vert \langle v, w_h\rangle_H\vert}{\Vert (-A_h^*(0))^{1/2}w_h\Vert}
\end{eqnarray}
Combining \eqref{adjoint3}  with \eqref{eclair1} yields
{\small
\begin{eqnarray}
\label{equidiscrete3}
K^{-1}\Vert (-A_h(0))^{-1/2}v\Vert\leq \Vert (-A_h(t))^{-1/2}v\Vert\leq\Vert (-A_h(0))^{-1/2}v\Vert,\quad v\in V_h
\end{eqnarray}
}
for all $t\in[0,T]$.
Note that \eqref{equidiscrete3} obviously holds if we replace $1/2$ by $0$ and by $1$. The proof of the lemma is therefore completed by interpolation theory.
\end{proof}
For  $t\in[0,T]$,  we  introduce the Ritz projection $R_h(t) :V\longrightarrow V_h$ defined by 
\begin{eqnarray}
\label{ritz1}
\langle -A(t)R_h(t)v,\chi\rangle_H=\langle -A(t)v,\chi\rangle_H=a(t)(v,\chi),\quad v\in V,\quad \chi\in V_h.
\end{eqnarray}
Under the regularity assumptions on the triangulation \eqref{ritz0} and in view of the V-ellipticity condition \eqref{ellip}, 
it is well known (see e.g.  \cite[(3.2)]{Luskin} or \cite{Ciarlet,Suzuki}) that the following error estimate holds
\begin{eqnarray}
\label{ritz2}
\Vert R_h(t)v-v\Vert+h\Vert R_h(t)v-v\Vert_{H^1(\Lambda)}\leq Ch^{r}\Vert v\Vert_{H^{r}(\Lambda)},\quad v\in V\cap H^{r}(\Lambda),
\end{eqnarray}
for any $r\in[1,2]$. Moreover, using \eqref{ritz0a} it holds that
\begin{eqnarray}
\label{ritz0b}
\Vert R_h(t)v-v\Vert_{H^2(\Lambda)}\leq C\Vert v\Vert_2,\quad v\in V\cap H^2(\Lambda),\quad t\in[0,T].
\end{eqnarray}
The following error estimate also holds (see e.g. \cite[(3.3)]{Luskin} or \cite{Ciarlet,Suzuki}) 
{\small
\begin{eqnarray}
\label{ritz3}
\Vert D_t\left(R_h(t)v-v\right)\Vert+h\Vert D_t\left(R_h(t)v-v\right)\Vert_{H^1(\Lambda)}\leq Ch^{r}\left(\Vert v\Vert_{H^{r}(\Lambda)}+\Vert D_tv\Vert_{H^{r}(\Lambda)}\right),
\end{eqnarray}
}
for any $r\in[1,2]$ and $v\in V\cap H^r(\Lambda)$, where $D_t:=\frac{\partial }{\partial t}$. The following lemma will be useful in our convergence proof.
\begin{lemma}
\label{lemma0a} 
Under \assref{assumption2}, the following estimates hold
{\small
\begin{eqnarray}
\label{ref3}
\Vert (A_h(t)-A_h(s))(-A_h(r))^{-1}u^h\Vert&\leq& C\vert t-s\vert\Vert u^h\Vert,\quad r,s,t\in[0,T],\quad u^h\in V_h,\\
\label{ref2}
\Vert (-A_h(r))^{-1}\left(A_h(s)-A_h(t)\right)u^h\Vert&\leq& C\vert s-t\vert\Vert u^h\Vert,\quad r,s,t\in[0,T],\quad u^h\in V_h\cap D.
\end{eqnarray}
}
Moreover for any $u^h\in V_h\cap \mathcal{D}\left(\left(-A(0)\right)^{1-\alpha_2}\right)$ the following estimate holds
{\small
\begin{eqnarray}
\label{ref4}
\Vert (-A_h(0))^{-\alpha_1}(A_h(t)-A_h(s))(-A_h(0))^{-\alpha_2}u^h\Vert&\leq& C\vert t-s\vert\Vert u^h\Vert,\quad s,t\in[0,T].
\end{eqnarray}
}
\end{lemma}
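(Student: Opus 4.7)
The plan is to reduce all three estimates to one common mechanism: the time-Lipschitz control on the bilinear form $a(t)(\cdot,\cdot)-a(s)(\cdot,\cdot)$ inherited from the Lipschitz-in-$t$ coefficients $q_{ij}$ and $q_j$, combined with the identification of $(-A_h(r))^{-1}$ with a Ritz projection of $(-A(r))^{-1}$, an elliptic-regularity bound for the continuous inverse, and the standard inverse inequality in $V_h$. As a first step, \lemref{lemma0} with $\gamma=1$ yields $\Vert(-A_h(0))(-A_h(r))^{-1}\Vert_{L(V_h)}\le K$ uniformly in $r$ and $h$, so that factoring $(-A_h(r))^{-1}=(-A_h(0))^{-1}(-A_h(0))(-A_h(r))^{-1}$ reduces both \eqref{ref3} and \eqref{ref2} to the reference case $r=0$.

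For \eqref{ref3} with $r=0$, a direct comparison of \eqref{discrete2} and \eqref{ritz1} shows $v^h:=(-A_h(0))^{-1}u^h=R_h(0)v$, where $v:=(-A(0))^{-1}u^h$; elliptic regularity then gives $v\in D$ with $\Vert v\Vert_2\le C\Vert u^h\Vert$. Setting $z^h:=(A_h(t)-A_h(s))v^h\in V_h$, one has $\Vert z^h\Vert^2=(a(s)-a(t))(v^h,z^h)$, which I split as $(a(s)-a(t))(v,z^h)+(a(s)-a(t))(v-R_h(0)v,z^h)$. The continuous piece equals $\langle(A(t)-A(s))v,z^h\rangle_H$ by the definition of $a(\cdot)$ on $D$, and \assref{assumption2}(ii) bounds it by $C|t-s|\Vert u^h\Vert\Vert z^h\Vert$. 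The remainder is controlled by the time-Lipschitz bound $|(a(s)-a(t))(\phi,\psi)|\le C|t-s|\Vert\phi\Vert_1\Vert\psi\Vert_1$, the Ritz estimate $\Vert v-R_h(0)v\Vert_1\le Ch\Vert v\Vert_2$ from \eqref{ritz2}, and the inverse inequality $\Vert z^h\Vert_1\le Ch^{-1}\Vert z^h\Vert$; the $h$-factors cancel, leaving $C|t-s|\Vert u^h\Vert\Vert z^h\Vert$. Dividing by $\Vert z^h\Vert$ gives \eqref{ref3}.

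The estimate \eqref{ref2} follows from the same mechanism after duality. Writing its left-hand side as $\sup_{g^h\in V_h,\,\Vert g^h\Vert=1}|(a(t)-a(s))(u^h,\tilde{w}^h)|$ with $\tilde{w}^h:=((-A_h(r))^*)^{-1}g^h$, and using that the formal adjoint $A^*(t)$ has the form \eqref{family} with the same smoothness in $x$ and Lipschitz regularity in $t$ (as exploited in the proof of \lemref{lemma0}), $\tilde{w}^h$ is the dual Ritz projection of $w:=(-A^*(r))^{-1}g^h$, with $\Vert \tilde{w}^h-w\Vert_1\le Ch\Vert g^h\Vert$ by the adjoint version of \eqref{ritz2}; splitting as before and now pairing $\Vert u^h\Vert_1\le Ch^{-1}\Vert u^h\Vert$ against $\Vert \tilde{w}^h-w\Vert_1$ yields \eqref{ref2}. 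Finally, \eqref{ref4} (interpreted with $\alpha_1+\alpha_2=1$, as in \assref{assumption2}(iv)) follows from \eqref{ref3} and \eqref{ref2} at $r=0$ by complex interpolation applied to the analytic family $(-A_h(0))^{-\alpha_1}(A_h(t)-A_h(s))(-A_h(0))^{-\alpha_2}$ between the endpoints $(\alpha_1,\alpha_2)=(0,1)$ and $(1,0)$. The main obstacle I anticipate is the careful matching of the $h$-factors from the Ritz error and the inverse inequality in both \eqref{ref3} and \eqref{ref2}; a secondary technicality is verifying that the adjoint discrete operator admits a Ritz-type projection with the right approximation properties on meshes used here.
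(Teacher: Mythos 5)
Your strategy is viable and genuinely different from the paper's. The paper proves \eqref{ref3} by a Cauchy--Schwarz trick that replaces $A_h(t)-A_h(s)$ by $A(t)-A(s)$ in one slot of the inner product, then uses the identity $(-A_h(r))^{-1}P_h=R_h(r)(-A(r))^{-1}$, \assref{assumption2}~(ii), and the $H^2$-stability of the Ritz projection \eqref{ritz0b}; it proves \eqref{ref2} by the same adjoint duality you use, but closes the estimate with the lower bound $\Vert(-A_h^*(r))w_h\Vert\geq c\Vert w_h\Vert_{V_r}$ rather than with a Ritz-error/inverse-inequality cancellation; \eqref{ref4} is obtained by interpolation in both arguments. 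Your version replaces the $H^2$-stability input by the first-order Ritz error in $H^1$ paired against an inverse estimate, and your preliminary reduction to $r=0$ via \lemref{lemma0} is a clean simplification the paper does not bother with.

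Two caveats before the argument closes. First, the inverse inequality $\Vert z^h\Vert_1\leq Ch^{-1}\Vert z^h\Vert$ (used in both \eqref{ref3} and \eqref{ref2}) requires a quasi-uniform family of triangulations; the paper only assumes the approximation properties \eqref{ritz0}--\eqref{ritz0a} and never invokes an inverse estimate, so your proof needs a mesh hypothesis the paper avoids, whereas the paper's route needs \eqref{ritz0b} instead. Second, the form-level Lipschitz bound $\vert(a(s)-a(t))(\phi,\psi)\vert\leq C\vert t-s\vert\,\Vert\phi\Vert_1\Vert\psi\Vert_1$ does not follow from \assref{assumption2}~(ii) alone, which only controls $A(t)-A(s)$ as a map from $D$ to $H$ (an $H^2\times L^2$ pairing) and its dual; you should cite \assref{assumption2}~(iv) with $\alpha_1=\alpha_2=1/2$, which is essentially the $H^1\times H^1$ statement you need, or verify it directly from Lipschitz-in-$t$ coefficients in \eqref{family}. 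There is also a harmless sign slip: the splitting should read $(a(s)-a(t))(v,z^h)+(a(s)-a(t))(R_h(0)v-v,z^h)$. With these points repaired, the argument goes through and the $h$-factors cancel as you anticipate.
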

\begin{proof}
Using the definition of $A_h(t)$ and $A_h(s)$ yields
\begin{eqnarray}
\label{ref5}
&&\Vert (A_h(t)-A_h(s))(-A_h(r))^{-1}u^h\Vert^2\nonumber\\
&=&\left\langle((A_h(t)-A_h(s))(-A_h(r))^{-1}u^h,((A_h(t)-A_h(s))(-A_h(r))^{-1}u^h\right\rangle_H \nonumber\\
&=&\left\langle((A(t)-A(s))(-A_h(r))^{-1}u^h,((A_h(t)-A_h(s))(-A_h(r))^{-1}u^h\right\rangle_H.
\end{eqnarray}
Using Cauchy's Schwartz inequality, the relation $A_h(r)R_h(r)=P_hA(r)$ (see e.g. \cite{Antonio1,Stig2}), \assref{assumption2} (ii) and the boundness of $R_h(r)$  yields
\begin{eqnarray}
\label{ref6a}
&&\Vert (A_h(t)-A_h(s))(-A_h(r))^{-1}u^h\Vert\nonumber\\
&\leq& C\Vert ((A(t)-A(s))(-A_h(r))^{-1}u^h\Vert\nonumber\\
&=&C\Vert ((A(t)-A(s))(-A_h(r))^{-1}P_hu^h\Vert\nonumber\\
&=&C\Vert ((A(t)-A(s))R_h(r)(-A(r))^{-1}u^h\Vert\nonumber\\
&=&C\Vert ((A(t)-A(s))(-A(r))^{-1}(-A(r))R_h(r)(-A(r))^{-1}u^h\Vert\nonumber\\
&\leq&C\vert t-s\vert\Vert (-A(r))R_h(r)(-A(r))^{-1}u^h\Vert.
\end{eqnarray}
Using triangle inequality and \eqref{ritz0b} yields
\begin{eqnarray}
\label{ref6b}
&&\Vert (-A(r))R_h(r)(-A(r))^{-1}u^h\Vert\nonumber\\
&\leq& \Vert (-A(r))R_h(r)(-A(r))^{-1}u^h-A(r)(-A(r))^{-1}u^h\Vert+\Vert A(r)(-A(r))^{-1}u^h\Vert\nonumber\\
&=&\left\Vert A(r)\left(R_h(r)(-A(r))^{-1}u^h-(-A(r))^{-1}u^h\right)\right\Vert+\Vert u^h\Vert\nonumber\\
&=&\Vert R_h(r)(-A(r))^{-1}u^h-(-A(r))^{-1}u^h\Vert_{H^2(\Lambda)}+\Vert u^h\Vert
\nonumber\\
&\leq &C\Vert (-A(r))^{-1}u^h\Vert_{H^2(\Lambda)}+\Vert u^h\Vert\nonumber\\
&\leq& C\Vert u^h\Vert.
\end{eqnarray}
Substituting \eqref{ref6b} in \eqref{ref6a} yields
\begin{eqnarray}
\label{ref6}
\Vert (A_h(t)-A_h(s))(-A_h(r))^{-1}u^h\Vert\leq C\vert t-s\vert  \Vert u^h \Vert.
\end{eqnarray}
This completes the proof of \eqref{ref3}.

 To prove \eqref{ref2}, as in \cite{Antonio2} or \cite{Stig2} we set $V_{r}=\mathcal{D}(-A(r))$, $V_{r}^h=\mathcal{D}(-A_h(r))$, so $V'_{r}=\mathcal{D}\left((-A(r))^{-1}\right)$. Following \cite[(67)]{Antonio2} or \cite{Stig2}, we have
{\small
\begin{eqnarray*}
\left\Vert (-A_h(r))^{-1}\left(A_h(s)-A_h(t)\right)u^h\right\Vert=\sup_{v_h\in V_r^h}\frac{\left\langle \left(A_h(s)-A_h(t)\right)u^h,(-A^*_h(r))^{-1}v_h\right\rangle_H}{\Vert v_h\Vert}
\end{eqnarray*}
}
Using the definition of $A_h(s)$ and $A_h(t)$, it holds that
{\small
\begin{eqnarray}
\left\Vert (-A_h(r))^{-1}\left(A_h(s)-A_h(t)\right)u^h\right\Vert&=&\sup_{v_h\in V_{r}^h}\frac{\left\langle \left(A(s)-A(t)\right)u^h,(-A^*_h(r))^{-1}v_h\right\rangle_H}{\Vert v_h\Vert}\nonumber\\
&=&\sup_{w_h\in V_{r}^h}\frac{\left\langle \left(A(s)-A(t)\right)u^h, w_h\right\rangle_H}{\Vert (-A^*_h(r))w_h\Vert}\nonumber\\
&\leq &C\sup_{w_h\in V_{r}^h}\frac{\left\langle \left(A(s)-A(t)\right)u^h, w_h\right\rangle_H}{\Vert w_h\Vert_{V_{r}}}\nonumber\\
&=&C\left\Vert \left(A(s)-A(t)\right)u^h\right\Vert_{-1}\nonumber\\
&=&C\left\Vert (-A(r))^{-1})\left(A(s)-A(t)\right)u^h\right\Vert\nonumber\\
&\leq& C\vert s-t\vert\,  \Vert u^{h} \Vert ,
\end{eqnarray}
}
where \assref{assumption2} (ii) is used at the last step.
This completes the proof of \eqref{ref2}. The proof of \eqref{ref4} follows from \eqref{ref2} and \eqref{ref3} by interpolation theory.
\end{proof}

\begin{lemma}
\label{lemderiv}
Let \assref{assumption2} be fulfilled. Then for any $u^h\in V_h\cap \mathcal{D}\left(\left(-A(0)\right)^{1-\alpha_2}\right)$ the following estimates hold
\begin{eqnarray}
\label{ba1}
\Vert (-A_h(0))^{-\alpha_1}A_h'(t)(-A_h(0))^{-\alpha_2}u^h\Vert &\leq& C\Vert u^h\Vert,\quad t\in[0,T],\\
\label{ba2}
\Vert (-A_h(0))^{-\alpha_1}A_h''(t)(-A_h(0))^{-\alpha_2}u^h\Vert &\leq& C\Vert u^h\Vert,\quad t\in[0,T],
\end{eqnarray}
where $\alpha_1$ and $\alpha_2$ are defined in \assref{assumption2}.
\end{lemma}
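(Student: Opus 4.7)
The plan is to combine the duality and Ritz-projection strategy already used in the proof of \lemref{lemma0a} with the continuous bounds supplied by \assref{assumption2}(iv) on the time derivatives of $A(t)$. The central observation is that the identity defining the discrete operator can be differentiated in $t$. Indeed, differentiating $\langle A_h(t)\phi, \chi\rangle_H = \langle A(t)\phi, \chi\rangle_H$, valid for all $\phi, \chi \in V_h$, once and twice in $t$ yields
\begin{eqnarray*}
\langle A_h'(t)\phi, \chi\rangle_H = \langle A'(t)\phi, \chi\rangle_H, \qquad \langle A_h''(t)\phi, \chi\rangle_H = \langle A''(t)\phi, \chi\rangle_H,
\end{eqnarray*}
i.e. $A_h'(t)\phi = P_h A'(t)\phi$ and $A_h''(t)\phi = P_h A''(t)\phi$ for every $\phi \in V_h$. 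This reduces the task to transporting the continuous bounds of \assref{assumption2}(iv) through $P_h$ and the negative fractional powers $(-A_h(0))^{-\alpha_i}$.

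For \eqref{ba1} there is a short route via \lemref{lemma0a}. Estimate \eqref{ref4} reads
\begin{eqnarray*}
\Vert (-A_h(0))^{-\alpha_1}(A_h(t)-A_h(s))(-A_h(0))^{-\alpha_2}u^h\Vert \leq C|t-s|\,\Vert u^h\Vert.
\end{eqnarray*}
Dividing by $|t-s|$, using that $t \mapsto A_h(t)\phi$ is differentiable on the finite-dimensional space $V_h$ (an immediate consequence of the differentiability assumption on $A(t)$ in \assref{assumption2}(iv)), and letting $s \to t$ yields \eqref{ba1} with the same constant.

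For \eqref{ba2}, I would follow the chain developed in the proof of \eqref{ref4}. Given $u^h \in V_h$ satisfying the stated regularity, duality and the adjoint identity $(A^*(t))_h = A_h^*(t)$ established in the proof of \lemref{lemma0} give
\begin{eqnarray*}
\Vert (-A_h(0))^{-\alpha_1}A_h''(t)(-A_h(0))^{-\alpha_2}u^h\Vert = \sup_{w_h\in V_h}\frac{|\langle A_h''(t)(-A_h(0))^{-\alpha_2}u^h,\, (-A_h^*(0))^{-\alpha_1}w_h\rangle_H|}{\Vert w_h\Vert}.
\end{eqnarray*}
Substituting $A_h''(t)$ with $A''(t)$ via the bilinear-form identity above, then inserting the Ritz projection through $A_h(0)R_h(0) = P_hA(0)$ together with \lemref{lemma0} to trade discrete negative fractional powers for continuous ones, and finally invoking the first bound of \assref{assumption2}(iv) produces the estimate by $\Vert u^h\Vert$.

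The main obstacle is the treatment of the fractional powers $\alpha_1,\alpha_2$ with $\alpha_1+\alpha_2=1$: the identity $A_h(0)R_h(0) = P_hA(0)$ together with the Ritz-projection estimates \eqref{ritz2}--\eqref{ritz0b} are first-order statements, so the clean transfer of negative fractional powers between the discrete and the continuous settings relies on interpolation, exactly as at the end of the proof of \lemref{lemma0}. Once this transfer is justified, \assref{assumption2}(iv) closes the argument at once.
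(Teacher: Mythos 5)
Your proposal is correct and follows essentially the same route as the paper: for \eqref{ba1} the paper likewise writes $A_h'(t)$ as the limit of the difference quotient and applies \eqref{ref4}, and for \eqref{ba2} it simply states that the proof "follows the same lines as that of \lemref{lemma0a}", which is precisely the duality/adjoint/Ritz-projection chain (closed by the $A''$ bound in \assref{assumption2}(iv) and interpolation) that you spell out. Your write-up is in fact more explicit than the paper's two-line argument.
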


\begin{proof}
Recall that
\begin{eqnarray}
\label{ba3}
A_h'(t)=\lim_{\delta\longrightarrow 0}\frac{A_h(t+\delta)-A_h(t)}{\delta}.
\end{eqnarray}
The proof of \eqref{ba1} is completed by combining \eqref{ref4} and \eqref{ba3}. The proof of \eqref{ba2} follows the same lines as that of \lemref{lemma0a}.
\end{proof}

\begin{remark}
\label{evolutionremark}
From \lemref{lemma0a},  it follows  \cite[Theorem 6.1, Chapter 5]{Pazy} that there exists a unique evolution system $U_h :\Delta(T)\longrightarrow L(H)$,
satisfying \cite[(6.3), Page 149]{Pazy}
\begin{eqnarray}
\label{ref6}
U_h(t,s)=S^h_s(t-s)+\int_s^tS^h_{\tau}(t-\tau)R^h(\tau,s)d\tau,
\end{eqnarray}
where $S^h_s(t):=e^{A_h(s)t}$, $R^h(t,s):=\sum\limits_{m=1}^{\infty}R^h_m(t,s)$, with $R^h_m(t,s)$ satisfying the following  recurrence relation \cite[(6.22), Page 153]{Pazy}
\begin{eqnarray}
R^h_{m+1}&=&\int_s^tR^h_1(t,s)R^h_m(\tau,s)d\tau,\\
R^h_1(t,s)&:=&(A_h(s)-A_h(t))S^h_s(t-s), \quad m\geq 1
\end{eqnarray}
Note also that from \cite[(6.6), Chpater 5, Page 150]{Pazy}, the following identity holds 
\begin{eqnarray}
\label{ref7}
R^h(t,s)=R_1^h(t,s)+\int_s^tR_1^h(t,\tau)R^h(\tau,s)d\tau.
\end{eqnarray} 
The mild solution of \eqref{semi1} is therefore given by
\begin{eqnarray}
\label{mild4}
u^h(t)=U_h(t,0)P_hu_0+\int_0^tU_h(t,s)P_hF(s,u^h(s))ds.
\end{eqnarray}
\end{remark}
\begin{lemma}
\label{pazylemma}
Under \assref{assumption2}, the  evolution system $U_h :\Delta(T)\longrightarrow H$ satisfies the following properties
\begin{itemize}
\item[(i)] $U_h(.,s)\in C^1(]s,T]; L(H))$, $0\leq s\leq T$ and 
\begin{eqnarray}
\frac{\partial U_h}{\partial t}(t,s)=-A_h(t)U_h(t,s), \quad 0\leq s\leq t\leq T,\\
\Vert A_h(t)U_h(t,s)\Vert_{L(H)}\leq \frac{C}{t-s},\quad 0\leq s<t\leq T.
\end{eqnarray}
\item[(ii)] $U_h(t,.)u\in C^1([0,t[; H)$, $0<t\leq T$, $u\in\mathcal{D}(A_h(0))$ and 
\begin{eqnarray}
\frac{\partial U_h}{\partial s}(t,s)u=-U_h(t,s)A_h(s)u,\quad 0\leq s\leq t\leq T\\
 \Vert A_h(t)U_h(t,s)A_h(s)^{-1}\Vert_{L(H)}\leq C, \quad 0\leq s\leq t\leq T.
\end{eqnarray}
\end{itemize}
\end{lemma}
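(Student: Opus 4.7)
The strategy is to verify that the hypotheses of Pazy's Theorem 6.1 (Chapter 5) are satisfied by the family $\{A_h(t)\}_{t\in[0,T]}$ with constants uniform in $h$, and then to read off (i)--(ii) as adaptations of the corresponding statements for the continuous family recorded in Remark \ref{remark2}. Concretely, I would first observe that for each $t\in[0,T]$ the operator $A_h(t)$ is defined on the fixed finite-dimensional subspace $V_h$ (so the domain is trivially $t$-independent), is uniformly sectorial by \eqref{sectorial1}, and hence generates the analytic semigroup $S_t^h(s)=e^{sA_h(t)}$ satisfying the smoothing bounds \eqref{smooth2}--\eqref{smooth1} with constants independent of $h$ and $t$. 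The Lipschitz-type estimates on $t\mapsto A_h(t)$ required by Pazy's construction are exactly \eqref{ref3}--\eqref{ref4} of \lemref{lemma0a}. With these ingredients Pazy's construction yields the evolution system $U_h$ via the series \eqref{ref6}--\eqref{ref7} already recorded in \rmref{evolutionremark}, together with the differentiability statements $\partial_t U_h(t,s)=-A_h(t)U_h(t,s)$ and $\partial_s U_h(t,s)u=-U_h(t,s)A_h(s)u$ for $u\in V_h$.

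The two quantitative bounds require a short computation that I would organize as follows. For the first bound, differentiate \eqref{ref6} in $t$, giving
\begin{eqnarray*}
A_h(t)U_h(t,s)=A_h(t)S^h_s(t-s)+A_h(t)\!\int_s^t S^h_\tau(t-\tau)R^h(\tau,s)\,d\tau.
\end{eqnarray*}
The first term is controlled by $C/(t-s)$ using \eqref{smooth2} together with \eqref{equidiscrete2} to absorb the difference of base points. For the second term I would first bound $R_1^h$ by writing
\begin{eqnarray*}
R_1^h(t,s)=\bigl[(A_h(s)-A_h(t))(-A_h(s))^{-1}\bigr]\bigl[(-A_h(s))S_s^h(t-s)\bigr],
\end{eqnarray*}
so that \eqref{ref3} and \eqref{smooth2} yield $\|R_1^h(t,s)\|_{L(H)}\le C$ uniformly in $h$. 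Iterating the recurrence then shows that $R^h(t,s)=\sum_m R_m^h(t,s)$ is uniformly bounded, and the integral contribution is controlled by factoring $A_h(t)S^h_\tau(t-\tau)=[A_h(t)(-A_h(\tau))^{-1}][(-A_h(\tau))S^h_\tau(t-\tau)]$ and applying \eqref{ref3} and \eqref{smooth2} again; integrability is preserved because only one $(t-\tau)^{-1}$ factor appears.

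For the second bound, I would multiply \eqref{ref6} on the right by $A_h(s)^{-1}$ and commute:
\begin{eqnarray*}
A_h(t)U_h(t,s)A_h(s)^{-1}=A_h(t)S_s^h(t-s)A_h(s)^{-1}+A_h(t)\!\int_s^t S^h_\tau(t-\tau)R^h(\tau,s)A_h(s)^{-1}d\tau.
\end{eqnarray*}
The first term equals $S_s^h(t-s)$ in operator norm up to the identity $A_h(t)A_h(s)^{-1}=\mathbf{I}+(A_h(t)-A_h(s))A_h(s)^{-1}$, where the perturbation is controlled by \eqref{ref3}. For the integral term the same factorization as above, now retaining the compensating factor $A_h(s)^{-1}$, removes the $(t-\tau)^{-1}$ singularity and yields a uniform constant.

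The main obstacle is bookkeeping the uniformity in $h$: every constant that enters must come either from the uniform sectorial bound \eqref{sectorial1}, from the uniform smoothing estimates \eqref{smooth2}--\eqref{smooth1}, from the equivalence of discrete fractional powers \lemref{lemma0}, or from \lemref{lemma0a}. Once this is checked, the proof is a direct transcription of the Pazy argument from the continuous to the discrete setting, with no new ideas required beyond what is already proved in Section~\ref{preliminaires}.
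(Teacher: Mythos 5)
Your overall strategy---checking that the discrete family $\{A_h(t)\}$ satisfies the hypotheses of Pazy's Theorem 6.1 with $h$-uniform constants and transcribing his construction---is exactly the route the paper takes; its proof is essentially the citation you reproduce, resting on \eqref{smooth1}, \eqref{smooth2}, \lemref{lemma0} and \lemref{lemma0a}. However, your justification of the two operator-norm bounds has a genuine gap. For $\Vert A_h(t)U_h(t,s)\Vert_{L(H)}\leq C(t-s)^{-1}$ you estimate the integral term by
$\int_s^t\Vert A_h(t)S^h_\tau(t-\tau)\Vert_{L(H)}\,\Vert R^h(\tau,s)\Vert_{L(H)}\,d\tau\leq C\int_s^t(t-\tau)^{-1}\,d\tau$
and assert that integrability is preserved because ``only one $(t-\tau)^{-1}$ factor appears''; that integral diverges logarithmically at $\tau=t$. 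The Pazy argument you are adapting does not proceed this way: it splits $R^h(\tau,s)=\bigl[R^h(\tau,s)-R^h(t,s)\bigr]+R^h(t,s)$, uses a H\"older estimate for $\tau\longmapsto R^h(\tau,s)$ (Pazy's Lemma 6.4, which would have to be re-derived in the discrete setting from \lemref{lemma0a} and \eqref{smooth2}) to make the first piece integrable against $(t-\tau)^{-1}$, and then bounds $\int_s^t A_h(t)S^h_\tau(t-\tau)\,d\tau$ separately by writing $A_h(t)S^h_\tau(t-\tau)=A_h(\tau)S^h_\tau(t-\tau)+(A_h(t)-A_h(\tau))(-A_h(\tau))^{-1}(-A_h(\tau))S^h_\tau(t-\tau)$ and invoking a further lemma controlling $\int_s^t A_h(\tau)S^h_\tau(t-\tau)\,d\tau$. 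None of these steps appears in your proposal, and without them the claimed bound is not established.

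The same defect affects your second bound: in $A_h(t)\int_s^t S^h_\tau(t-\tau)R^h(\tau,s)A_h(s)^{-1}\,d\tau$ the compensating factor $A_h(s)^{-1}$ sits to the right of $R^h(\tau,s)$, with which it does not commute, so it cannot cancel the $(t-\tau)^{-1}$ singularity generated by $A_h(t)S^h_\tau(t-\tau)$ at $\tau=t$; near that endpoint the integrand is still of order $(t-\tau)^{-1}$ and the subtraction trick is again unavoidable. The rest of your proposal is sound and consistent with the paper: the verification of uniform sectoriality via \eqref{sectorial1}, the bound $\Vert R_1^h(t,s)\Vert_{L(H)}\leq C$ from \eqref{ref3} and \eqref{smooth2}, the convergence of the series defining $R^h$, and the treatment of the leading terms $A_h(t)S^h_s(t-s)$ and $A_h(t)S^h_s(t-s)A_h(s)^{-1}$ are all correct.
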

\begin{proof}
The proof is similar  to that of \cite[Theorem 6.1, Chapter 5]{Pazy} by using \eqref{smooth1}, \eqref{smooth2},  \lemref{lemma0a} and \lemref{lemma0}.
\end{proof}

\begin{lemma}
\label{evolutionlemma}
Let  \assref{assumption2} be fulfilled. 
\begin{itemize}
\item[(i)] The following estimates hold
\begin{eqnarray}
\label{reste1}
\Vert R^h_1(t,s)\Vert_{L(H)}\leq C,&&\quad
\Vert R^h_m(t,s)\Vert_{L(H)}\leq \frac{C}{m!}(t-s)^{m-1},\quad m\geq 1,\\
\label{reste2}
\Vert R^h(t,s)\Vert_{L(H)}\leq C,&&\quad \Vert U_h(t,s)\Vert_{L(H)}\leq C,\quad 0\leq s\leq t\leq T.
\end{eqnarray}
\item[(ii)] For any $0\leq\gamma\leq\alpha\leq 1$ and $0\leq s\leq t\leq T$, the following estimates hold
\begin{eqnarray}
\label{ae1}
\Vert (-A_h(r))^{\alpha}U_h(t,s)\Vert_{L(H)}&\leq& C(t-s)^{-\alpha},\quad r\in[0,T],\\
\label{ae3}
\Vert U_h(t,s)(-A_h(r))^{\alpha}\Vert_{L(H)}&\leq& C(t-s)^{-\alpha},\quad r\in[0,T],\\
\label{ae2}
 \Vert (-A_h(r))^{\alpha}U_h(t,s)(-A_h(s))^{-\gamma}\Vert_{L(H)}&\leq& C(t-s)^{\gamma-\alpha}, \quad r\in[0,T].
\end{eqnarray}
\item[(iii)] For any $0\leq s\leq t\leq T$ the following useful estimates hold 
\begin{eqnarray}
\label{hen1}
\Vert \left(U_h(t,s)-\mathbf{I}\right)(-A_h(s))^{-\gamma}\Vert_{L (H)}&\leq& C(t-s)^{\gamma}, \quad 0\leq \gamma\leq 1,\\
\label{hen2}
\Vert \left (-A_h(r))^{-\gamma}(U_h(t,s)-\mathbf{I}\right)\Vert_{L (H)}&\leq& C(t-s)^{\gamma}, \quad 0\leq \gamma\leq 1.
\end{eqnarray}
\end{itemize}
\end{lemma}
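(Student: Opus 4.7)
The plan is to exploit the series representation \eqref{ref6} together with the smoothing estimates \eqref{smooth1}--\eqref{smooth2}, the equivalence of fractional-power norms in \lemref{lemma0}, and the perturbation bounds of \lemref{lemma0a}. I would handle the three parts in sequence since each feeds the next. For part (i), the starting point is to rewrite $R_1^h(t,s)=(A_h(s)-A_h(t))(-A_h(s))^{-1}\cdot(-A_h(s))S_s^h(t-s)$, so that the H\"older factor of order $(t-s)$ from \eqref{ref3} cancels the singular factor of order $(t-s)^{-1}$ from \eqref{smooth2} with $\alpha=1$, giving $\|R_1^h(t,s)\|_{L(H)}\leq C$ uniformly. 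The bound on $R_m^h$ follows by induction on the Volterra recurrence $R_{m+1}^h=\int_s^t R_1^h(t,\tau)R_m^h(\tau,s)d\tau$, and summing the resulting series produces $\|R^h(t,s)\|_{L(H)}\leq C$; substituting into \eqref{ref6} and using $\|S_s^h(t-s)\|\leq C$ then delivers the bound on $U_h$.

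For part (ii), I would multiply \eqref{ref6} on the left by $(-A_h(r))^\alpha$ and on the right by $(-A_h(s))^{-\gamma}$. The semigroup contribution becomes $[(-A_h(r))^\alpha(-A_h(s))^{-\alpha}]\cdot(-A_h(s))^{\alpha-\gamma}S_s^h(t-s)$ and is controlled by $C(t-s)^{\gamma-\alpha}$ via \lemref{lemma0} and \eqref{smooth2}. For the integral contribution, part (i) combined with \eqref{smooth2} applied to $(-A_h(r))^\alpha S_\tau^h(t-\tau)$ (time-shifted through \lemref{lemma0}) yields an integrable singularity $(t-\tau)^{-\alpha}$, whose integral is $O((t-s)^{1-\alpha})$ and absorbed into $C(t-s)^{\gamma-\alpha}$ on $[0,T]$ since $\gamma\leq 1$. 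This simultaneously proves \eqref{ae2} and, specialising $\gamma=0$, also \eqref{ae1}. For \eqref{ae3} I would push $(-A_h(r))^\alpha$ through on the right by first establishing the companion estimate $\|R^h(\tau,s)(-A_h(r))^\alpha\|\leq C(\tau-s)^{-\alpha}$; this reduces to bounding $R_1^h(\tau,s)(-A_h(r))^\alpha$ through the factorisation $(A_h(s)-A_h(\tau))(-A_h(s))^{-1}\cdot(-A_h(s))^{1+\alpha}S_s^h(\tau-s)\cdot(-A_h(s))^{-\alpha}(-A_h(r))^\alpha$ for $\alpha<1$, then propagating to $R^h$ through \eqref{ref7} by a weakly singular Gronwall iteration, with the endpoint $\alpha=1$ handled separately via \lemref{pazylemma}(ii) and norm equivalence.

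Part (iii) then reduces to a short application of part (i) and \lemref{lemma0}. Splitting $(U_h(t,s)-\mathbf{I})(-A_h(s))^{-\gamma}=(S_s^h(t-s)-\mathbf{I})(-A_h(s))^{-\gamma}+\int_s^t S_\tau^h(t-\tau)R^h(\tau,s)(-A_h(s))^{-\gamma}d\tau$ via \eqref{ref6}, the first piece is bounded by $C(t-s)^\gamma$ through \eqref{smooth1}, and the second by $C(t-s)\leq CT^{1-\gamma}(t-s)^\gamma$ using part (i), which proves \eqref{hen1}; estimate \eqref{hen2} follows by the symmetric decomposition, where $(-A_h(r))^{-\gamma}$ is moved past the semigroup factor via $[(-A_h(r))^{-\gamma}(-A_h(s))^\gamma](-A_h(s))^{-\gamma}$ before invoking \eqref{smooth1}. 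The main technical obstacle I anticipate is the right-sided estimate \eqref{ae3}: because $R^h$ is built with $A_h(s)-A_h(\tau)$ acting on the left, pushing $(-A_h(r))^\alpha$ through $U_h(t,s)$ from the right is not symmetric to the left-sided case, and demands the extra weakly singular Volterra iteration together with the separate endpoint treatment sketched above. Once this step is in place, the remainder of the lemma follows from the same template used for \eqref{ae1}.
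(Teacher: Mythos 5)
Your proposal is correct and follows essentially the same route as the paper: the Volterra series representation \eqref{ref6} for $U_h$, with $R^h_1$ controlled by cancelling the H\"older factor from \lemref{lemma0a} against the smoothing bound \eqref{smooth2}, the norm equivalences of \lemref{lemma0} to move fractional powers around, and the split \eqref{paz2} for part (iii); you in fact supply more detail than the paper for \eqref{ae3}, which the paper dismisses as ``similar'' even though, as you rightly observe, the right-sided estimate requires the companion bound on $R^h(\tau,s)(-A_h(r))^{\alpha}$. The only point to make explicit is that your integral argument for \eqref{ae1}--\eqref{ae2} produces a non-integrable singularity when $\alpha=1$, so that endpoint must be taken from \lemref{pazylemma} (as the paper does) rather than from the series.
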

\begin{proof}
\begin{itemize}
\item[(i)] The proof of the first estimate of \eqref{reste1} follows the same lines as \cite[Corollary 6.3, Page 153]{Pazy} by using
 \eqref{smooth2}, Lemmas \ref{lemma0} and \ref{lemma0a}. The proof of the second estimate of \eqref{reste1} follows the same lines as \cite[(6.23), Page 153]{Pazy}. The proof of the first estimate of \eqref{reste2} is similar to \cite[(6.26), Page 153]{Pazy} and the proof of the second estimate of \eqref{reste2} is similar to \cite[(6.27), Page 153]{Pazy}.
 \item[(ii)] The estimate of  \eqref{ae1} for $\alpha=1$ is given in \lemref{pazylemma}. The proof of \eqref{ae1} for the case $0\leq\alpha<1$ follows from the integral
 equation \eqref{ref6}. In fact pre-multiplying both sides of \eqref{ref6} by $(-A_h(s))^{\alpha}$, taking the norm in both sides,  using \lemref{lemma0} and \eqref{smooth2} yields
 \begin{eqnarray}
 \label{paz1}
 \Vert (-A_h(r))^{\alpha}U_h(t,s)\Vert_{L(H)}&\leq& \Vert (-A_h(r))^{\alpha}S_s^h(t-s)\Vert_{L(H)}\nonumber\\
 &+&\int_s^t\Vert (-A_h(r))^{\alpha}S_{\tau}^h(t-\tau)\Vert_{L(H)}\Vert R^h(\tau,s)\Vert_{L(H)}d\tau\nonumber\\
 &\leq& C(t-s)^{-\alpha}+C\int_s^t(t-\tau)^{-\alpha}d\tau\nonumber\\
 &\leq& C(t-s)^{-\alpha}.
 \end{eqnarray}
 This proves \eqref{ae1}. The proof of \eqref{ae2} and \eqref{ae3} are similar to that of \eqref{ae1}.
 \item[(iii)] From \eqref{ref6}, it holds that
 \begin{eqnarray}
 \label{paz2}
 (U_h(t,s)-\mathbf{I})(-A_h(r))^{-\gamma}&=&(-A_h(s))^{-\gamma}\left(e^{A(s)(t-s)}-\mathbf{I}\right)\nonumber\\
 &+&\int_s^tS^h_{\tau}(t-\tau)R^h(\tau,s)(-A_h(s))^{-\gamma}d\tau.
 \end{eqnarray}
 Taking the norm in both sides of \eqref{paz2}, using \eqref{smooth1},  the boundness of $(-A_h(r))^{-\gamma}$ and \lemref{evolutionlemma} (i) yields
 \begin{eqnarray}
 \Vert (U_h(t,s)-\mathbf{I})(-A_h(s))^{-\gamma}\Vert_{L(H)}&=&C(t-s)^{\gamma}+C\int_s^td\tau\leq C(t-s)^{\gamma}.\nonumber
 \end{eqnarray}
 This completes the proof of \eqref{hen1}. The proof of \eqref{hen2} is similar to that of \eqref{hen1}.
\end{itemize}
\end{proof}

The following space regularity of the semi-discrete problem \eqref{semi1} will be useful in our convergence analysis.
\begin{lemma} 
\label{regularitylemma}
Let  \assref{assumption2} (i)-(ii), \assref{assumption1}  and \assref{assumption3}  be fulfilled with the corresponding  $0\leq \beta<2$. Then for all $\gamma\in[0,\beta]$ and $\alpha\in[0,2)$ the following estimates hold
\begin{eqnarray}
\label{regular1}
\Vert (-A_h(r))^{\gamma/2}u^h(t)\Vert&\leq& C,\quad\hspace{2cm} 0\leq r,t\leq T,\\
\label{regular1a}
\Vert (-A_h(0))^{\alpha/2}u^h(t)\Vert&\leq& Ct^{\beta/2-\alpha/2},\quad t\in[0, T], \quad \beta\in[0,2],
%\label{regular2}
%\Vert u^h(t_2)-u^h(t_1)\Vert&\leq& C(t_2-t_1)^{\beta/2}, \quad 0\leq t_1\leq t_2\leq T.
\end{eqnarray}
\end{lemma}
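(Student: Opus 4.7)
The plan is to start from the mild formulation \eqref{mild4}, apply the fractional power $(-A_h(\cdot))^{\gamma/2}$ or $(-A_h(0))^{\alpha/2}$ to both sides, and bound each piece using the evolution-system estimates \eqref{ae1}--\eqref{ae2} together with the norm equivalence from \lemref{lemma0}. Since \lemref{lemma0} permits switching between $(-A_h(r))^{\pm\cdot}$ and $(-A_h(0))^{\pm\cdot}$ at a constant cost, I will work throughout with powers of $-A_h(0)$.

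I would first establish the base case $\gamma=0$ of \eqref{regular1}: taking the $H$-norm of \eqref{mild4}, the uniform bound on $U_h$ from \eqref{reste2} and the Lipschitz estimate \eqref{Lipschitz} (which in particular gives $\|F(s,u)\|\le C(1+\|u\|)$) yield $\|u^h(t)\|\le C$ via Gronwall's lemma. For general $\gamma\in(0,\beta]$, I apply $(-A_h(0))^{\gamma/2}$ to both terms of \eqref{mild4}; in the homogeneous piece I decompose
\[
(-A_h(0))^{\gamma/2}U_h(t,0)P_h u_0 = (-A_h(0))^{(\gamma-\beta)/2}\,\bigl[(-A_h(0))^{\beta/2}U_h(t,0)(-A_h(0))^{-\beta/2}\bigr]\,(-A_h(0))^{\beta/2}P_h u_0,
\]
bounding the bracketed factor by \eqref{ae2} with both exponents equal to $\beta/2\le 1$, the left factor by the boundedness of negative fractional powers, and the last factor by \assref{assumption1} combined with the uniform stability of $P_h$ against $(-A_h(0))^{\beta/2}$ (a standard consequence of $A_h(0)R_h(0)=P_hA(0)$ together with interpolation). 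For the convolution term, \eqref{ae1} produces the integrable kernel $(t-s)^{-\gamma/2}$ with $\gamma/2<1$ (since $\beta<2$), and combining with the linear growth of $F$ and the already-proved $L^\infty$-bound on $u^h$ gives a finite value.

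For \eqref{regular1a} the same splitting works, but now $\alpha$ may exceed $\beta$. In the homogeneous term I apply \eqref{ae2} directly, obtaining $\|(-A_h(0))^{\alpha/2}U_h(t,0)(-A_h(0))^{-\beta/2}\|_{L(H)}\le Ct^{(\beta-\alpha)/2}$ when $\alpha\ge\beta$, while the case $\alpha<\beta$ reduces to the first part of the lemma. For the convolution, \eqref{ae1} supplies the kernel $(t-s)^{-\alpha/2}$ (integrable since $\alpha/2<1$), and \eqref{regular1} controls the nonlinearity $\|F(s,u^h(s))\|\le C$; this integrates to a term bounded by $Ct^{1-\alpha/2}\le Ct^{(\beta-\alpha)/2}$ for small $t$, and by a constant for $t$ bounded away from $0$, both absorbed into the claimed bound.

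The step I expect to be most delicate is the uniform-in-$h$ stability of $P_h$ against fractional powers, namely $\|(-A_h(0))^{\beta/2}P_h u_0\|\le C\|(-A(0))^{\beta/2}u_0\|$ for $\beta\in(0,2)$: this is immediate at the endpoints $\beta\in\{0,2\}$ (boundedness of $P_h$ and the identity $A_h(0)R_h(0)=P_hA(0)$) and follows by interpolation in between, but must be invoked with care. A second, minor obstacle is that \eqref{ae1}--\eqref{ae2} are stated in \lemref{evolutionlemma} only for exponents in $[0,1]$, so for $\alpha\in(1,2)$ a short bootstrap via the integral equation \eqref{ref6} together with \lemref{lemma0a} and the composition law for fractional powers is required; this extension is routine and parallel to the proof of \eqref{ae1} already given in \lemref{evolutionlemma}.
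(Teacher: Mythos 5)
Your proposal is correct and follows essentially the same route as the paper's proof: take the mild formulation \eqref{mild4}, establish the $L^\infty$ bound on $u^h$ by Gronwall, then apply fractional powers and use the smoothing estimates \eqref{ae1}--\eqref{ae2} of the discrete evolution system together with the norm equivalence of \lemref{lemma0}, with the kernel $(t-s)^{-\gamma/2}$ (resp.\ $(t-s)^{-\alpha/2}$) integrable because $\beta<2$ (resp.\ $\alpha<2$). You in fact supply details the paper elides (the uniform bound $\Vert(-A_h(0))^{\beta/2}P_hu_0\Vert\leq C\Vert(-A(0))^{\beta/2}u_0\Vert$ and the ``similar'' proof of \eqref{regular1a}); note only that your final worry about exponents exceeding $1$ is moot, since the powers actually invoked are $\alpha/2<1$ and $\beta/2\leq 1$, so no bootstrap of \eqref{ae1}--\eqref{ae2} is needed.
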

\begin{proof}
 We first show that 
 \begin{eqnarray}
 \label{regular8a}
 \Vert u^h(t)\Vert\leq C,\quad t\in[0,T].
 \end{eqnarray}
  Taking the norm in both side of \eqref{mild4} and using the  triangle inequality yields
 {\small
\begin{eqnarray}
\label{regular3}
\Vert u^h(t)\Vert\leq \Vert U_h(t,0)P_hu_0\Vert+\left\Vert\int_0^tU_h(t,s)P_hF(s,u^h(s))ds\right\Vert ds
:=I_0+I_1.
\end{eqnarray}
}
Using \lemref{evolutionlemma} (i) and the uniformly boundedness of $P_h$, it holds that
\begin{eqnarray}
\label{regular4}
I_0\leq \Vert u_0\Vert\leq C.
\end{eqnarray}
Using  \lemref{evolutionlemma} (i), \assref{assumption3} and the uniformly boundedness of $P_h$, it holds that
\begin{eqnarray}
\label{regular5}
I_1&\leq& \int_0^t\Vert U_h(t,s)P_hF(s,u^h(s))\Vert\leq C\int_0^t\left(C+\Vert u^h(s)\Vert\right)ds\nonumber\\
&\leq& C+C\int_0^t\Vert u^h(s)\Vert ds.
\end{eqnarray}
Substituting \eqref{regular5} and \eqref{regular4} in \eqref{regular3} yields
\begin{eqnarray}
\label{regular8}
\Vert u^h(t)\Vert\leq C+C\int_0^t\Vert u^h(s)\Vert ds.
\end{eqnarray}
Applying the continuous Gronwall's lemma to \eqref{regular8} completes the proof of \eqref{regular8a}.
Let us now prove \eqref{regular1}. 
Pre-multiplying \eqref{mild4} by $(-A_h(r))^{\gamma/2}$, taking  the norm in both sides  and using triangle inequality  yields
\begin{eqnarray}
\label{regular9}
\left\Vert (-A_h(r))^{\gamma/2}u^h(t)\right\Vert&\leq& \left\Vert (-A_h(r))^{\gamma/2}U_h(t,0)P_hu_0\right\Vert_{L(H)}\nonumber\\
&+&\int_0^t\left\Vert (-A_h(r))^{\gamma/2}U_h(t,s)P_hF(s,u^h(s))\right\Vert ds
\nonumber\\
&=:&II_0+II_1.
\end{eqnarray}
Inserting $(-A_h(0))^{-\gamma/2}(-A_h(0))^{\gamma/2}$, using  \lemref{evolutionlemma} (ii) and \lemref{lemma0}, it holds that
{\small
\begin{eqnarray}
\label{premier}
II_0\leq \Vert (-A_h(r))^{\gamma/2}U_h(t,0)(-A_h(0))^{-\gamma/2}\Vert_{L(H)}\Vert (-A_h(0))^{\gamma/2}u_0\Vert\leq C.
\end{eqnarray}
}
Using  \lemref{lemma0}, \lemref{evolutionlemma} (ii), \assref{assumption3} and \eqref{regular8a} yields
\begin{eqnarray}
\label{deuxieme}
II_1&\leq& C\left(\int_0^t\left\Vert(-A_h(s))^{\gamma/2}U_h(t,s)\right\Vert_{L(H)}ds\right)\sup_{r\in[0,T]}\left\Vert F\left(r,u^h(r)\right)\right\Vert \nonumber\\
&\leq& C\sup_{s\in[0,T]}\left(1+\Vert u^h(s)\Vert\right)\int_0^t(t-s)^{-\gamma/2}ds\leq C.
\end{eqnarray}
Substituting \eqref{deuxieme} and \eqref{premier} in \eqref{regular9} completes the proof of \eqref{regular1}. 
The proof of \eqref{regular1a} is similar to that of \eqref{regular1}.  This completes the proof of \lemref{regularitylemma}.
\end{proof}
Let us consider the following deterministic  problem: find $w\in V$ such that
\begin{eqnarray}
\label{determ1}
w'=A(t)w,\quad w(\tau)=v,\quad t\in(\tau,T].
\end{eqnarray}
The corresponding semi-discrete problem in space is: find $w_h\in V_h$ such that
\begin{eqnarray}
\label{determ2}
w_h'(t)=A_h(t)w_h,\quad w_h(\tau)=P_hv,\quad t\in(\tau,T],\quad \tau\geq 0.
\end{eqnarray}
Let us define the operator
\begin{eqnarray}
T_h(t,\tau):=U(t,\tau)-U_h(t,\tau)P_h,
\end{eqnarray}
so that $w(t)-w_h(t)=T_h(t,\tau)v$. The following lemma will be useful in our convergence analysis. 
\begin{lemma}
\label{spaceerrorlemma}
Let $r\in[0,2]$ and  $\gamma\leq r$. Let  \assref{assumption2} be fulfilled.  Then the following error estimate holds for the semi-discrete approximation  \eqref{determ2}
\begin{eqnarray}
\label{er0}
\Vert w(t)-w_h(t)\Vert=\Vert T_h(t,\tau)v\Vert\leq Ch^r(t-\tau)^{-(r-\gamma)/2}\Vert v\Vert_{\gamma},
\end{eqnarray}
for any $v\in \mathcal{D}\left(\left(-A(0)\right)^{\gamma/2}\right)$.
\end{lemma}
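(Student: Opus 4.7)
My plan is to handle the two endpoint cases via an Aronson--type splitting using the Ritz projection $R_h(t)$, and then obtain the full range of $(r,\gamma)$ by interpolation.

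\textbf{Step 1: Decomposition.} Write the error as $w(t)-w_h(t)=\rho(t)+\theta(t)$ where $\rho(t):=w(t)-R_h(t)w(t)$ and $\theta(t):=R_h(t)w(t)-w_h(t)\in V_h$. The first term is the Ritz error, controlled by \eqref{ritz2}. For the second, observe that the definitions \eqref{ritz1}--\eqref{discrete2} give the identity $A_h(t)R_h(t)v=P_hA(t)v$ for $v\in D$; differentiating $\theta$ in time then yields
\begin{equation*}
\theta'(t)=A_h(t)\theta(t)+(\partial_tR_h)(t)w(t)-(P_h-R_h(t))A(t)w(t),\qquad \theta(\tau)=(R_h(\tau)-P_h)v.
\end{equation*}
Hence, by Duhamel with the semi-discrete evolution system $U_h$,
\begin{equation*}
\theta(t)=U_h(t,\tau)(R_h(\tau)-P_h)v+\int_\tau^t U_h(t,s)\bigl[(\partial_sR_h)(s)w(s)-(P_h-R_h(s))A(s)w(s)\bigr]\,ds.
\end{equation*}

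\textbf{Step 2: Smooth endpoint ($r=\gamma=2$).} Using \eqref{ritz2} on $\rho$ and on the initial piece $(R_h(\tau)-P_h)v$, together with the regularity $\Vert w(s)\Vert_2\le C\Vert v\Vert_2$ coming from \lemref{evolutionlemma}(ii) applied to \eqref{determ1}, I would bound $\Vert\rho(t)\Vert\le Ch^2\Vert v\Vert_2$ and the first term of $\theta$ by $Ch^2\Vert v\Vert_2$ via the uniform bound $\Vert U_h\Vert_{L(H)}\le C$ from \eqref{reste2}. For the integral piece, use \eqref{ritz2}--\eqref{ritz3} together with $A(s)w(s)=w'(s)$ and the Lipschitz regularity in $s$ given by \assref{assumption2}(iv) and \lemref{lemderiv}, so the integrand is $O(h^2)$ in norm, yielding $\Vert\theta(t)\Vert\le Ch^2\Vert v\Vert_2$.

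\textbf{Step 3: Nonsmooth endpoint ($r=2$, $\gamma=0$).} The same decomposition works, but now the bound $\Vert w(s)\Vert_2\le C(s-\tau)^{-1}\Vert v\Vert$ (which follows from \eqref{ae1} applied to $U(t,\tau)$ via the continuous analogue of \lemref{evolutionlemma}) produces a singularity at $s=\tau$. I would further split $\int_\tau^t=\int_\tau^{(t+\tau)/2}+\int_{(t+\tau)/2}^t$: on the first half use the smoothing of $U_h$ via \eqref{ae1} to absorb derivatives of $w$ into $\partial_s U_h$ (a duality/integration by parts), on the second half the factor $(s-\tau)^{-1}$ is integrable against the bounded weight $(t-s)^{0}$. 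This delivers $\Vert T_h(t,\tau)v\Vert\le Ch^2(t-\tau)^{-1}\Vert v\Vert$.

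\textbf{Step 4: Interpolation and $r<2$.} The general estimate for $r\in[0,2]$, $\gamma\in[0,r]$ follows by interpolation (between Steps 2 and 3 for $r=2$, and further with the trivial stability bound $\Vert T_h\Vert_{L(H)}\le C$ for $r=0$, using equivalence of norms \eqref{equivnorme1} and \lemref{lemma0} to identify fractional domain norms with those based at $0$).

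\textbf{Main obstacle.} The delicate point will be the non-autonomous contributions: the term $(\partial_sR_h)(s)w(s)$ has no autonomous analogue, and its bound via \eqref{ritz3} requires simultaneous control of $w$ and $w'=A(s)w$ in $H^r$. Controlling this through the singularity at $s=\tau$ in the nonsmooth case, while keeping all constants independent of $h$ and $t$, is where the time-regularity provided by \assref{assumption2}(iv) and \lemref{lemderiv} must be used carefully, and where the smoothing estimates \eqref{ae1}--\eqref{ae2} on $U_h$ are indispensable.
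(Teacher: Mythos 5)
Your proposal follows essentially the same route as the paper: the Ritz-projection splitting $w-w_h=\rho+\theta$, the identity $A_h(t)R_h(t)=P_hA(t)$, the Duhamel representation of $\theta$ through the discrete evolution system $U_h$, the midpoint split of the integral with integration by parts on the first half, the Ritz estimates \eqref{ritz2}--\eqref{ritz3}, and the smoothing bounds $\Vert w(s)\Vert_r\leq C(s-\tau)^{-(r-\gamma)/2}\Vert v\Vert_{\gamma}$. The only genuine organizational difference is that you prove the two endpoint cases and interpolate in $(r,\gamma)$, whereas the paper carries the general weights $(s-\tau)^{-(r-\gamma)/2}$ and $(s-\tau)^{-1-(r-\gamma)/2}$ through the whole argument; your version buys a cleaner presentation of the two extreme mechanisms at the cost of a three-point interpolation over the triangle $\{0\leq\gamma\leq r\leq 2\}$, which works but must be set up with some care. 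One small inaccuracy: in your Step 2 you assert that for $r=\gamma=2$ the integrand $D_s\rho(s)$ is uniformly $O(h^2)$, but $\Vert D_sw(s)\Vert_2=\Vert A(s)U(s,\tau)v\Vert_2\sim(s-\tau)^{-1}\Vert v\Vert_2$ is not integrable at $s=\tau$, so even the smooth endpoint requires the midpoint split (or the cancellation $\theta(\tau)+P_h\rho(\tau)=0$ plus integration by parts near $s=\tau$) that you only invoke in Step 3; with that adjustment the argument goes through.
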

\begin{proof}
As in \cite[(3.5)]{Antonio1} or \cite{Stig2}, we  set
\begin{eqnarray}
\label{espa0}
w_h(t)-w(t)&=&\left(w_h(t)-R_h(t)w(t)\right)+\left(R_h(t)w(t)-w(t)\right)\nonumber\\
&\equiv& \theta(t)+\rho(t).
\end{eqnarray}
Using the definition of $R_h(t)$ and $P_h$, we can prove exactly as in \cite{Stig2,Antonio1} that
\begin{eqnarray}
\label{espacetamb}
A_h(t)R_h(t)=P_hA(t),\quad t\in[0,T].
\end{eqnarray}
One can easily compute the following derivatives 
\begin{eqnarray}
\label{espace1a}
\theta_t&=&A_h(t)w_h(t)-R_h'(t)w(t)-R_h(t)A(t)w(t),\\
\label{espace1b}
D_t\rho&=&R_h'(t)w(t)+R_h(t)A(t)w(t)-A(t)w(t).
\end{eqnarray}
Endowing $V$ and the linear subspace $V_h$ with the $\Vert .\Vert_{H^1(\Lambda)}$ norm, it follows from \eqref{ritz2} that $R_h(t)\in L(V, V_h)$ for all $t\in [0, T]$. By the definition of the differential operator, it follows that $R_h'(t)\in L(V, V_h)$ for all $t\in[0, T]$. Hence $P_hR_h'(t)=R_h'(t)$ for all $t\in[0,T]$ 
and it follows from \eqref{espace1b} that 
\begin{eqnarray}
\label{espace1c}
P_hD_t\rho=R_h'(t)w(t)+R_h(t)A(t)w(t)-P_hA(t)w(t).
\end{eqnarray}
Adding and subtracting $P_hA(t)w(t)$ in \eqref{espace1a} and using \eqref{espacetamb}, it follows that $\theta$ satisfies the following equation
\begin{eqnarray}
\label{espa1}
\theta_t=A_h(t)\theta-P_hD_t\rho,\quad t\in(\tau,T],
\end{eqnarray}
Since $\{A_h(t)\}_{t\in[0,T]}$ generates an evolution system $\{U_h(t,s)\}_{0\leq s\leq t\leq T}$, it holds that
\begin{eqnarray}
\label{espa2}
\theta(t)=U_h(t,\tau)\theta(\tau)-\int_{\tau}^tU_h(t,s)P_hD_s\rho(s)ds.
\end{eqnarray}
Splitting the  integral part of \eqref{espa2} into two intervals and integrating by parts over the first interval yields
\begin{eqnarray}
\label{espa3}
\theta(t)&=& U_h(t,\tau)\theta(\tau)+U_h(t,\tau)P_h\rho(\tau)-U_h\left(t,(t+\tau)/2\right)P_h\rho\left((t+\tau)/2\right)\nonumber\\
&+&\int_{\tau}^{(t+\tau)/2}\frac{\partial}{\partial s}\left(U_h(t,s)\right)P_h\rho(s)ds-\int_{(t+\tau)/2}^tU_h(t,s)P_hD_s\rho(s)ds.
\end{eqnarray}
Using the expression of $\theta(\tau)$, $\rho(\tau)$ and the fact that $u_h(\tau)=P_hv$, it holds that
\begin{eqnarray}
\label{espa4}
\theta(\tau)+P_h\rho(\tau)=0.
\end{eqnarray}
Using \eqref{espa4} reduces  \eqref{espa3}  to
\begin{eqnarray}
\label{espa5}
\theta(t)&=& -U_h(t,s)P_h\rho((t+\tau)/2)+\int_{\tau}^{(t+\tau)/2}\frac{\partial}{\partial s}\left(U_h(t,s)
\right)P_h\rho(s)ds\nonumber\\
&-&\int_{(t+\tau)/2}^tU_h(t,s)P_hD_s\rho(s)ds.
\end{eqnarray}
Taking the norm in both sides of \eqref{espa5}, using the uniformly boundedness of $P_h$, \eqref{smooth2}, Lemma  \ref{lemma0a} and Lemma \ref{evolutionlemma} (i) yields
{\small
\begin{eqnarray}
\label{espa6}
\Vert\theta(t)\Vert&\leq& C\Vert \rho((t+\tau)/2)\Vert+\int_{\tau}^{(t+\tau)/2}\left\Vert U_h(t,s)A_h(s)\right\Vert_{L(H)}\Vert \rho(s)\Vert ds+\int_{(t+\tau)/2}^t\Vert D_s\rho(s)\Vert ds\nonumber\\
&\leq& C\Vert \rho((t+\tau)/2)\Vert+\int_{\tau}^{(t+\tau)/2}(t-s)^{-1}\Vert \rho(s)\Vert ds+\int_{(t+\tau)/2}^t\Vert D_s\rho(s)\Vert ds.
\end{eqnarray}
}
Using \eqref{ritz2}, it holds that
\begin{eqnarray}
\label{espa7}
\Vert \rho(s)\Vert\leq  Ch^r\Vert w(s)\Vert_r.
\end{eqnarray}
Note that the solution of \eqref{determ1} is represented as follows.
\begin{eqnarray}
\label{encore1}
w(s)=U(s,\tau)v,\quad s\geq \tau.
\end{eqnarray}
Pre-multiplying both sides of \eqref{encore1} by $(-A(s))^{r/2}$, inserting an appropriate power of  $-A(\tau)$,  using \lemref{evolutionlemma} (ii) and \cite[Lemma 1]{Antjd1} yields
\begin{eqnarray}
\label{encore2}
\Vert (-A(t))^{r/2}w(s)\Vert&\leq& \Vert (-A(s))^{r/2}U(s,\tau)(-A(\tau))^{-\gamma/2}\Vert_{L(H)}\Vert (-A(\tau))^{\gamma/2}v\Vert\nonumber\\
&\leq& C(s-\tau)^{-(r-\gamma)/2}\Vert (-A(\tau))^{\gamma/2}v\Vert\nonumber\\
&\leq& C(s-\tau)^{-(r-\gamma)/2}\Vert v\Vert_{\gamma}.
\end{eqnarray}
Therefore it holds that
\begin{eqnarray}
\label{espa8}
\Vert w(s)\Vert_r\leq C(s-\tau)^{-(r-\gamma)/2}\Vert v\Vert_{\gamma}, \quad 0\leq \gamma\leq r\leq 2,\quad \tau<s.
\end{eqnarray}
Substituting \eqref{espa8} in \eqref{espa7} yields
\begin{eqnarray}
\label{espa8a}
\Vert \rho(s)\Vert_r\leq Ch^r(s-\tau)^{-(r-\gamma)/2}\Vert v\Vert_{\gamma}.
\end{eqnarray}
Using \eqref{ritz3}, it holds that
\begin{eqnarray}
\label{espa13}
\Vert D_s\rho(s)\Vert \leq Ch^r(\Vert w(s)\Vert_r+\Vert D_sw(s)\Vert_r).
\end{eqnarray}
Taking the derivative with respect to $s$ in both sides of \eqref{encore1} yields
\begin{eqnarray}
\label{encore3}
D_sw(s)=A(s)U(s,\tau)v.
\end{eqnarray}
As for  \eqref{encore2}, pre-multiplying both sides of \eqref{encore3} by $(-A(s))^{r/2}$, inserting $(-A(\tau))^{-\gamma/2}(-A(\tau))^{\gamma/2}$ and 
using \lemref{evolutionlemma} (ii)  yields
\begin{eqnarray}
\label{encore4a}
\Vert D_sw(s)\Vert_r\leq C(s-\tau)^{-1-(r-\gamma)/2}\Vert v\Vert_{\gamma}.
\end{eqnarray}
Substituting \eqref{espa8} and  \eqref{encore4a} in \eqref{espa13} yields
\begin{eqnarray}
\label{espa15}
\Vert D_s\rho(s)\Vert&\leq& Ch^r\left((s-\tau)^{-(r-\gamma)/2}\Vert v\Vert_{\gamma}+(s-\tau)^{-1-(r-\gamma)/2}\Vert v\Vert_{\gamma}\right)\nonumber\\
&\leq& Ch^r(s-\tau)^{-1-(r-\gamma)/2}\Vert v\Vert_{\gamma}.
\end{eqnarray}
Substituting \eqref{espa8a} and \eqref{espa15} in \eqref{espa6} yields 
\begin{eqnarray}
\label{espa17}
\Vert\theta(t)\Vert&\leq& Ch^r(t-\tau)^{-(r-\gamma)/2}\Vert v\Vert_{\gamma}\nonumber\\
&+&Ch^r\int_{\tau}^{(t+\tau)/2}(t-s)^{-1}(s-\tau)^{-(r-\gamma)/2}\Vert v\Vert_{\gamma}ds\nonumber\\
&+&Ch^r\int_{(t+\tau)/2}^t(s-\tau)^{-1-(r-\gamma)/2}\Vert v\Vert_{\gamma}ds.
\end{eqnarray}
Using the estimate 
{\small
\begin{eqnarray}
\label{espa18}
\int_{\tau}^{(t+\tau)/2}(t-s)^{-1}(s-\tau)^{-(r-\gamma)/2}ds+\int_{(t+\tau)/2}^t(s-\tau)^{-1-(r-\gamma)/2}ds\leq C(t-\tau)^{-(r-\gamma)/2},\nonumber
\end{eqnarray}
}
it follows  that
\begin{eqnarray}
\label{espa20}
\Vert \theta(t)\Vert\leq  Ch^r(t-\tau)^{-(r-\gamma)/2}\Vert v\Vert_{\gamma}.
\end{eqnarray}
Substituting \eqref{espa20} and \eqref{espa13} in \eqref{espa0} yields
\begin{eqnarray}
\Vert w(t)-w_h(t)\Vert\leq \Vert\theta(t)\Vert+\Vert \rho(t)\Vert\leq Ch^r(t-\tau)^{-(r-\gamma)/2}\Vert v\Vert_{\gamma}.
\end{eqnarray}
This completes the proof of \lemref{spaceerrorlemma}.
\end{proof}
\begin{remark}
\label{remaksemi}
 \lemref{spaceerrorlemma} generalizes \cite[Lemma 3.1]{Antonio1} to time dependent problems. 
 It also generalises  \cite[Lemmas 3.2 and 3.3]{Luskin} and \cite[Theorems 3 and 4]{Thomee1} to more general boundary conditions than only Dirichlet boundary conditions.
 Note that the fact that the solution vanishes at the boundary is fundamental in the proof of \cite[Lemmas 3.2 and 3.3]{Luskin} and \cite[Theorems 3 and 4]{Thomee1}, where authors used energy estimates arguments.
\end{remark}

The following theorem gives the space convergence error of the semi-discrete solution in space toward the exact solution. 
It is fundamental in the proof of the convergence of the fully discrete scheme.
\begin{theorem}
\label{proposition2}
Let  \assref{assumption2}, \assref{assumption1} and \assref{assumption3}  be fulfilled. Let $u(t)$ and $u^h(t)$ be the mild solution of \eqref{model} and \eqref{semi1} respectively.
\begin{itemize}
\item[(i)] If $0<\beta<2$, 
then the following error estimate holds 
\begin{eqnarray}
\label{time1}
\Vert u(t)-u^h(t)\Vert\leq Ch^{\beta},\quad 0\leq t\leq T.
\end{eqnarray}
\item[(ii)] If $\beta=2$, then the following error estimate holds
\begin{eqnarray}
\label{time2}
\Vert u(t)-u^h(t)\Vert\leq Ch^{2}\left(1+\max\left(0,\ln(t/h^2)\right)\right),\quad 0< t\leq T.
\end{eqnarray}
\item[(iii)] If $\beta=2$ and if further \assref{assumption4} is fulfilled, then the following error estimate holds
\begin{eqnarray}
\Vert u(t)-u^h(t)\Vert\leq Ch^2,\quad 0\leq t\leq T.
\end{eqnarray}
\end{itemize}
\end{theorem}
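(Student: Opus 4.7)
The plan is to subtract the mild solution representation \eqref{mild0} and its semi-discrete counterpart \eqref{mild4}, and write
\begin{eqnarray*}
u(t)-u^h(t) &=& T_h(t,0)u_0 + \int_0^t T_h(t,s)F(s,u(s))\,ds \\
&&+\;\int_0^t U_h(t,s)P_h\bigl[F(s,u(s))-F(s,u^h(s))\bigr]\,ds,
\end{eqnarray*}
so that the three parts can be estimated respectively by \lemref{spaceerrorlemma}, by \lemref{spaceerrorlemma} combined with the uniform boundedness of $F(s,u(s))$ (via \assref{assumption3} and \eqref{regular8a}), and by the uniform bound $\Vert U_h(t,s)P_h\Vert_{L(H)}\leq C$ from \lemref{evolutionlemma} together with the Lipschitz property \eqref{Lipschitz}. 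The third piece will contribute only $C\int_0^t\Vert u(s)-u^h(s)\Vert\,ds$, so in every case the proof will close by a standard continuous Gronwall argument applied to the two deterministic terms.

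For case (i), $0<\beta<2$, I would apply \lemref{spaceerrorlemma} with $r=\gamma=\beta$ to the initial-data term, which yields $\Vert T_h(t,0)u_0\Vert\leq Ch^\beta\Vert u_0\Vert_\beta$, and with $r=\beta$, $\gamma=0$ to the integral term, giving the bound $Ch^\beta(t-s)^{-\beta/2}$; this is integrable on $[0,t]$ because $\beta/2<1$, and it produces a uniform $Ch^\beta$. Case (iii) proceeds along the same lines: I use $r=\gamma=2$ for the initial data term and $r=2$, $\gamma=2\eta$ for small $\eta>0$ in the integral term, so that \assref{assumption4} guarantees $\Vert F(s,u(s))\Vert_{2\eta}\leq C$ and the kernel becomes $Ch^2(t-s)^{-1+\eta}$, which is integrable.

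The delicate case is (ii), $\beta=2$ with only $F\in H$-valued bounds, where the natural kernel from \lemref{spaceerrorlemma} is $h^2(t-s)^{-1}$, which diverges logarithmically at $s=t$. My plan is to handle the initial-data term cleanly with $r=\gamma=2$ to get $Ch^2\Vert u_0\Vert_2$, and to split the integral term at $s=t-h^2$ (for $t>h^2$): on $[0,t-h^2]$ apply \lemref{spaceerrorlemma} with $r=2$, $\gamma=0$ and integrate $\int_0^{t-h^2}h^2(t-s)^{-1}ds\leq Ch^2\ln(t/h^2)$; on $[t-h^2,t]$ bound $\Vert T_h(t,s)F\Vert$ directly by $C\Vert F\Vert\leq C$ using \lemref{evolutionlemma} and pick up only the measure $h^2$. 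For $t\leq h^2$, the second strategy alone suffices. This reproduces the factor $1+\max(0,\ln(t/h^2))$. The main obstacle throughout is that many of the classical estimates (integration by parts on $\theta$, duality for $A_h^{*}$, the derivative $D_s\rho$) rely on properties of $U_h(t,s)$ — sharp smoothing, Ritz commutation, adjoint equivalence of fractional norms — for a time-dependent family rather than a single sectorial generator; fortunately these have already been assembled in \lemref{lemma0}, \lemref{lemma0a}, \lemref{pazylemma}, \lemref{evolutionlemma} and \lemref{spaceerrorlemma}, and the remaining work is the splitting/Gronwall bookkeeping described above.
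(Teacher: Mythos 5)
Your decomposition and argument coincide with the paper's proof: the same splitting of $u(t)-u^h(t)$ into an initial-data term handled by \lemref{spaceerrorlemma} with $r=\gamma=\beta$, a deterministic integral term handled by \lemref{spaceerrorlemma} with $r=\beta$, $\gamma=0$ (the two versions differ only in whether the Lipschitz difference is attached to $U$ or to $U_hP_h$, which is immaterial since both are uniformly bounded), and a Gronwall closure. Your treatment of case (ii) via the splitting at $s=t-h^2$ and of case (iii) via \assref{assumption4} with $\gamma=2\eta$ correctly supplies the details that the paper dismisses as ``straightforward.''
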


\begin{proof} 
Subtracting \eqref{mild4} form \eqref{mild0},  taking the  norm  and using triangle inequality yields
\begin{eqnarray}
\label{estiI}
\Vert u(t)-u^h(t)\Vert&\leq& \left\Vert U(t,0)u_0-U_h(t,0)P_hu_0\right\Vert\nonumber\\
&+&\left\Vert\int_0^{t}\left[U(t,s)F\left(s,u(s)\right)-U_h(t,s)P_hF\left(s,u^h(s)\right)\right]
 ds\right\Vert\nonumber\\
 & =:&III_0+III_1.
\end{eqnarray}
Using \lemref{spaceerrorlemma} with $r=\gamma=\beta$  yields
\begin{eqnarray}
\label{jour1}
III_0\leq Ch^{\beta}\Vert u_0\Vert\leq Ch^{\beta}.
\end{eqnarray}
Using \lemref{spaceerrorlemma} with $r=\beta$ (with $\beta<2$), $\gamma=0$, \assref{assumption3},  \lemref{regularitylemma} and \lemref{evolutionlemma} yields
\begin{eqnarray}
\label{estiI2}
III_1&\leq& \int_0^t\left\Vert U(t,s)F\left(s,u(s)\right)-U(t,s)F\left(s,u^h(s)\right)\right\Vert ds\nonumber\\
&+&\int_0^t\left\Vert U(t,s)F\left(s,u^h(s)\right)-U_h(t,s)P_hF\left(s,u^h(s)\right)\right\Vert ds\nonumber\\
&\leq& C\int_0^t\left\Vert u(s)-u^h(s)\right\Vert_{L^2(\Omega,H)}ds+Ch^{\beta}\int_0^t(t-s)^{-\beta/2}ds\nonumber\\
&\leq& Ch^{\beta}+C\int_0^{t}\left\Vert u(s)-u^h(s)\right\Vert ds.
\end{eqnarray}
Substituting \eqref{estiI2} and \eqref{jour1} in \eqref{estiI} yields
\begin{eqnarray}
\label{lat1}
\left\Vert u(t)-u^h(t)\right\Vert\leq Ch^{\beta}+C\int_0^{t}\left\Vert u(s)-u^h(s)\right\Vert ds.
\end{eqnarray}
Applying the continuous Gronwall's lemma to \eqref{lat1} prove \eqref{time1}.
The proof of \eqref{time2} is straightforward. This completes the proof of  \propref{proposition2}.
\end{proof}
The following lemma extends some results in \cite{Luskin} (see e.g. \cite[Lemma 2.4, (2.8)]{Luskin} and \cite[Lemma 2.6]{Luskin}) 
to the case of fully semilinear problem. It also extends \cite[Lemma 3.7]{Antjd2} to the case of non-autonomous problems.
% Moreover, here we are concerned with the estimates of the derivatives of the semi-discrete solution in space, instead of the solution itself. Therefore, more careful estimates have to be done  in order to make the right hand sides of the estimates independent of the parameter of the space disretization $h$. 
\begin{lemma}
\label{ancien}
Let \assref{assumption1} (with $0<\beta<2$), \assref{assumption2},  \assref{assumption3} and \assref{assumption4} be fulfilled. 
\begin{itemize}
\item[(i)] The following estimate holds
\begin{eqnarray}
 \Vert D_tu^h(t)\Vert\leq Ct^{-1+\beta/2},\quad t\in[0,T].
\end{eqnarray}
\item[(ii)] For any $\alpha\in(0, \beta)$, the following estimate holds
\begin{eqnarray}
\left\Vert (-A_h(0))^{\alpha/2}D_tu^h(t)\right\Vert\leq Ct^{-1-\alpha/2+\beta/2},\quad t\in(0,T].
\end{eqnarray}
\item[(iii)] The following holds
\begin{eqnarray}
\Vert D^2_tu^h(t)\Vert\leq Ct^{-2+\beta/2},\quad t\in(0,T].
\end{eqnarray}
\end{itemize}
\end{lemma}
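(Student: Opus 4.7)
The overarching plan is to exploit the semilinear evolution equation \eqref{semi1} satisfied by $u^h$, namely $D_tu^h(t)=A_h(t)u^h(t)+P_hF(t,u^h(t))$, and bootstrap the spatial regularity estimates from \lemref{regularitylemma} into bounds on temporal derivatives. Throughout, each additional power of $A_h$ costs one factor of $(t-s)^{-1}$ in the smoothing provided by $U_h(t,s)$, and \assref{assumption4} is the key tool that allows fractional powers of $-A_h$ to pass through $F$ with only a small loss, which is what renders the otherwise non-integrable singularities tractable.

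For item (i), the summand $P_hF(t,u^h(t))$ is uniformly bounded by \assref{assumption3} together with \lemref{regularitylemma}, so it suffices to estimate $\Vert A_h(t)u^h(t)\Vert$. Applying $A_h(t)$ to the mild representation \eqref{mild4} splits the target into a homogeneous piece $A_h(t)U_h(t,0)P_hu_0$ and an integral piece. The homogeneous piece is controlled by $Ct^{-1+\beta/2}$ directly through \eqref{ae2} with $\alpha=1$, $\gamma=\beta/2$, combined with \lemref{lemma0}. The integral $\int_0^tA_h(t)U_h(t,s)P_hF(s,u^h(s))\,ds$ carries the non-integrable factor $(t-s)^{-1}$, and so I would insert $(-A_h(s))^{-\eta}(-A_h(s))^{\eta}$ for a small $\eta>0$, apply \eqref{ae2} with exponents $(1,\eta)$ to obtain the integrable power $(t-s)^{\eta-1}$, and bound $\Vert(-A_h(s))^{\eta}P_hF(s,u^h(s))\Vert$ uniformly via \assref{assumption4} and \lemref{regularitylemma}.

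For item (ii), I would act with $(-A_h(0))^{\alpha/2}$ on the identity $D_tu^h=A_hu^h+P_hF$. The $F$-summand is bounded uniformly by \assref{assumption4} since $\alpha<\beta<2$ places $\alpha/2$ in its admissible range. For $(-A_h(0))^{\alpha/2}A_h(t)u^h(t)$, I repeat the mild-form argument from (i) with one extra fractional power, using \lemref{lemma0} to exchange $-A_h(0)$ and $-A_h(t)$ up to constants and \eqref{ae2} to supply the refined exponent $1+\alpha/2$. The resulting integral inherits the integrable power $(t-s)^{\eta-1-\alpha/2}$ whose time primitive, added to the homogeneous contribution, yields exactly $Ct^{-1-\alpha/2+\beta/2}$.

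For item (iii), I would differentiate the ODE once more to obtain
\begin{eqnarray*}
D_t^2u^h(t)=A_h'(t)u^h(t)+A_h(t)D_tu^h(t)+\partial_tP_hF(t,u^h(t))+P_h\partial_uF(t,u^h(t))D_tu^h(t).
\end{eqnarray*}
The $\partial_tF$ and $\partial_uF$ terms are controlled by \assref{assumption3} together with item (i), while $\Vert A_h'(t)u^h(t)\Vert$ is handled by \lemref{lemderiv} with suitable $\alpha_1,\alpha_2$ combined with \lemref{regularitylemma}. The delicate term is $A_h(t)D_tu^h(t)$: substituting the ODE produces $A_h(t)^2u^h(t)+A_h(t)P_hF(t,u^h(t))$, both of which involve two powers of $A_h$. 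I expect this to be the main obstacle, since the compounding of singularities at this order forces a double application of the fractional-power trick of (i): one must invoke \assref{assumption4} with a larger $\eta$ (still small) to smuggle enough regularity through $F$, while simultaneously tracking the non-autonomous corrections via \lemref{lemma0}, \lemref{lemma0a} and \lemref{lemderiv}. Once this bookkeeping is executed, the same mild-form splitting used in (i), now with exponent $2$ in place of $1$, delivers the announced bound $Ct^{-2+\beta/2}$.
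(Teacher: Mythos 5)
Your part (i) is a workable alternative to the paper's argument (the paper instead studies $v^h(t)=tD_tu^h(t)$, derives an evolution equation for it and applies Duhamel plus Gronwall, which avoids \assref{assumption4} altogether), but parts (ii) and (iii) contain genuine gaps. In (ii) you bound $(-A_h(0))^{\alpha/2}P_hF(t,u^h(t))$ by \assref{assumption4} with $\gamma=\alpha/2$; since $\alpha$ ranges over all of $(0,\beta)$ and $\beta$ may be close to $2$, the exponent $\alpha/2$ can be close to $1$, whereas \assref{assumption4} is only assumed to hold ``for $\gamma>0$ small enough''. Worse, your integral term carries the kernel $(t-s)^{\eta-1-\alpha/2}$, which is integrable only if $\eta>\alpha/2$ --- again forcing a non-small exponent through $F$. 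In (iii) the defect is fatal: substituting the equation into $A_h(t)D_tu^h(t)$ produces $A_h(t)^2u^h(t)$, and hitting the mild formula with two powers of $A_h$ yields the kernel $(t-s)^{\eta-2}$, which is non-integrable for every $\eta<1$; no ``larger but still small'' $\eta$ in \assref{assumption4} can repair this.

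The paper's route is designed precisely to dodge these obstructions. For (ii) it writes $D_tu^h(t)=t^{-1}\int_0^tU_h(t,s)[\cdots]\,ds$ and lets the factor $(-A_h(0))^{\alpha/2}$ be absorbed entirely by the smoothing of $U_h(t,s)$, costing only the integrable weight $(t-s)^{-\alpha/2}$; the singular integrand $\Vert D_su^h(s)\Vert\leq Cs^{-1+\beta/2}$ supplied by part (i) then combines with a Beta-function estimate (\lemref{lemmastig}) to give $t^{-\alpha/2+\beta/2}$, and the prefactor $t^{-1}$ produces the claimed rate. For (iii) it sets $w^h(t)=tD_t^2u^h(t)$, so that every dangerous term sits behind a single $U_h(t,s)$; the critical term $U_h(t,s)A_h(s)D_su^h(s)$ is split as $U_h(t,s)(-A_h(s))^{1-\beta/2+\epsilon}\cdot(-A_h(s))^{\beta/2-\epsilon}D_su^h(s)$ and controlled by part (ii) --- so (ii) is not merely a parallel statement but the essential input for (iii). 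You should restructure (ii) and (iii) along these lines, or find another device that never asks the evolution family, nor \assref{assumption4}, to absorb more than one power (respectively, more than a small fractional power) of $A_h$ at a time.
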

\begin{proof}
As in the proof of \cite[Theorem 5.2]{Stig2} or \cite[Lemma 3.7]{Antjd2}, we  set $v^h(t)=tD_tu^h(t)$, it follows that $D_tv^h(t)$ satisfies the following equation
\begin{eqnarray}
D_tv^h(t)&=&A_h(t)v^h(t)+D_tu^h(t)+tA_h'(t)u^h(t)+tP_h\frac{\partial F}{\partial t}\left(t, u^h(t)\right)\nonumber\\
&+&tP_h\frac{\partial F}{\partial u}\left(t,u^h(t)\right)v^h(t).
\end{eqnarray}
Therefore by the Duhammel's principle, it holds that
\begin{eqnarray}
\label{deri1}
v^h(t)&=&\int_0^tU_h(t,s)\left[D_tu^h(t)+sA_h'(s)u^h(s)+sP_h\frac{\partial F}{\partial s}\left(s, u^h(s)\right)\right]ds\nonumber\\
&+&\int_0^tsU_h(t,s)P_h\frac{\partial F}{\partial u}\left(s,u^h(s)\right)v^h(s)ds.
\end{eqnarray}
Taking the norm in both sides of \eqref{deri1}, using \assref{assumption3}  and \lemref{pazylemma} yields
{\small
\begin{eqnarray}
\label{inter1}
\Vert v^h(t)\Vert&\leq& \int_0^t\left\Vert U_h(t,s)D_su^h(s)\right\Vert ds+\int_0^ts\left\Vert U_h(t,s)A_h'(s)u^h(s)\right\Vert ds\nonumber\\
&+&\int_0^ts\left\Vert U_h(t,s)P_h\frac{\partial F}{\partial s}\left(s, u^h(s)\right)\right\Vert ds+\int_0^ts\left\Vert U_h(t,s) P_h\frac{\partial F}{\partial u}\left(s,u^h(s)\right)\right\Vert_{L(H)}\Vert v^h(s)\Vert ds\nonumber\\
&\leq&  \int_0^t\left\Vert U_h(t,s)D_su^h(s)\right\Vert ds+\int_0^ts\left\Vert U_h(t,s)A_h'(s)u^h(s)\right\Vert ds+Ct^2\nonumber\\
&+&C\int_0^t\Vert v^h(s)\Vert ds.
\end{eqnarray}
}
Using \lemref{lemderiv} and \lemref{regularitylemma} yields
\begin{eqnarray}
\label{inter2}
&&\int_0^ts\left\Vert U_h(t,s)A_h'(s)u^h(s)\right\Vert ds\nonumber\\
&\leq&\int_0^ts\left\Vert U_h(t,s)\left(-A_h(0)\right)^{1-\beta/2}\right\Vert_{L(H)}\left\Vert\left(-A_h(0)\right)^{-1+\beta/2} A_h'(s)(-A_h(0))^{-\beta/2}\right\Vert_{L(H)}\nonumber\\
&&\left\Vert(-A_h(0))^{\beta/2}u^h(s)\right\Vert ds\nonumber\\
&\leq& Ct\int_0^t(t-s)^{-1+\beta/2}\left\Vert (-A_h(0))^{\beta/2}u^h(s)\right\Vert ds\nonumber\\
&\leq& Ct\int_0^t(t-s)^{-1+\beta/2}ds\leq Ct^{1+\beta/2}.
\end{eqnarray}
Using \lemref{pazylemma} and \lemref{regularitylemma}, we obtain
{\small
\begin{eqnarray}
\label{inter3}
&&\left\Vert U_h(t,s)D_su^h(s)\right\Vert\nonumber\\
&\leq& \left\Vert U_h(t,s)A_h(s)u^h(s)\right\Vert+\Vert U_h(t,s)P_hF(u^h(s))\Vert\nonumber\\
&\leq& \left\Vert U_h(t,s)(-A_h(0))^{1-\beta/2}\right\Vert_{L(H)}\left\Vert (-A_h(0))^{\beta/2}u^h(s)\right\Vert+\Vert U_h(t,s)\Vert_{L(H)}\Vert P_hF(u^h(s))\Vert\nonumber\\
&\leq&C(t-s)^{-1+\beta/2}\Vert u_0\Vert_{\beta}+C\Vert u_0\Vert\nonumber\\
&\leq& C(t-s)^{-1+\beta/2}.
\end{eqnarray}
}
Substituting \eqref{inter3} and \eqref{inter2} in \eqref{inter1} yields
\begin{eqnarray}
\label{inter4}
\Vert v^h(t)\Vert&\leq& C\int_0^t(t-s)^{-1+\beta/2}ds+Ct^2+C\int_0^t\Vert v^h(s)\Vert ds\nonumber\\
&\leq& Ct^{\beta/2}+C\int_0^t\Vert v^h(s)\Vert ds.
\end{eqnarray}
Applying the continuous Gronwall's lemma to \eqref{inter4} yields
\begin{eqnarray}
\Vert v^h(t)\Vert \leq t^{\beta/2}.
\end{eqnarray}
Therefore we have 
\begin{eqnarray}
\Vert D_tu^h(t)\Vert\leq Ct^{-1+\beta/2}.
\end{eqnarray}
Let us now prove (ii). 
It follows from \eqref{deri1} that 
\begin{eqnarray}
\label{deri2}
D_tu^h(t)&=&t^{-1}\int_0^tU_h(t,s)\left[D_su^h(s)+sA_h'(s)u^h(s)+sP_h\frac{\partial F}{\partial s}\left(s, u^h(s)\right)\right]ds\nonumber\\
&+&t^{-1}\int_0^tU_h(t,s)sP_h\frac{\partial F}{\partial u}\left(s,u^h(s)\right)D_su^h(s)ds.
\end{eqnarray}
Pre-multiplying both sides of \eqref{deri2} by $(-A_h(0))^{\alpha/2}$ yields
\begin{eqnarray}
\label{deri3}
&&(-A_h(0))^{\alpha/2}D_tu^h(t)\nonumber\\
&=&t^{-1}\int_0^t(-A_h(0))^{\alpha/2}U_h(t,s)\left[D_su^h(s)+sA_h'(s)u^h(s)P_h\frac{\partial F}{\partial s}\left(s,u^h(s)\right)\right]ds\nonumber\\
&+&t^{-1}\int_0^ts\left(-A_h(0)\right)^{\alpha/2}U_h(t,s)P_h\frac{\partial F}{\partial u}\left(s,u^h(s)\right)D_su^h(s)ds.
\end{eqnarray}
Taking the norm in both sides of \eqref{deri3} yields
{\small
\begin{eqnarray}
\label{inter5}
&&\left\Vert (-A_h(0))^{\alpha/2}D_tu^h(t)\right\Vert\nonumber\\
&\leq&t^{-1}\int_0^t(t-s)^{-\alpha/2}\left[\Vert D_su^h(s)\Vert+s\left\Vert \frac{\partial F}{\partial s}\left(s,u^h(s)\right)\right\Vert\right]ds\nonumber\\
&+&t^{-1}\int_0^t(t-s)^{-\alpha/2}s\left\Vert P_h\frac{\partial F}{\partial u}\left(s,u^h(s)\right)\right\Vert_{L(H)}\Vert D_su^h(s)\Vert ds\nonumber\\
&+&t^{-1}\int_0^ts\left\Vert\left(-A_h(0)\right)^{\alpha/2}U_h(t,s)A_h'(s)u^h(s)\right\Vert ds\nonumber\\
&\leq& Ct^{-1}\int_0^t(t-s)^{-\alpha/2}\left[s^{-1+\beta/2}+s\right]ds+t^{-1}\int_0^ts\left\Vert(-A_h(0)^{\alpha/2}U_h(t,s)A_h'(s)u^h(s)\right\Vert ds\nonumber\\
&\leq &Ct^{-1}\int_0^t(t-s)^{-\alpha/2}s^{-1+\beta/2}ds+\int_0^t\Vert(-A_h(0)^{\alpha/2}U_h(t,s)A_h'(s)u^h(s)\Vert ds.
\end{eqnarray}
}
Using \lemref{lemderiv} and \lemref{regularitylemma}, it holds that
\begin{eqnarray}
\label{inter6}
&&\int_0^t\left\Vert (-A_h(0))^{\alpha/2}U_h(t,s)A_h'(s)u^h(s)\right\Vert ds\nonumber\\
&\leq& \int_0^t\left\Vert (-A_h(0))^{\alpha/2}U_h(t,s)(-A_h(0))^{\epsilon}\right\Vert_{L\left(\mathcal{D}\left(-A(0)\right)^{\epsilon}, H\right)}\nonumber\\
&&\times\left\Vert (-A_h(0))^{-\epsilon}A_h'(s)(-A_h(0))^{-1+\epsilon}(-A_h(0))^{1-\epsilon}u^h(s)\right\Vert ds\nonumber\\
&\leq& C\int_0^t(t-s)^{-\alpha/2}\left\Vert (-A_h(0))^{1-\epsilon}u^h(s)\right\Vert ds\nonumber\\
&\leq& C\int_0^t(t-s)^{-\alpha/2-\epsilon}s^{\beta/2-1+\epsilon}ds\nonumber\\
&\leq & Ct^{\beta/2-\alpha/2}.
\end{eqnarray}
Substituting \eqref{inter6} dans \eqref{inter5} yields
\begin{eqnarray}
\label{inter7}
\left\Vert (-A_h(0))^{\alpha/2}D_tu^h(t)\right\Vert&\leq& Ct^{-1}\int_0^t(t-s)^{-\alpha/2}s^{-1+\beta/2}ds+Ct^{-\alpha/2-\epsilon}\nonumber\\
&\leq& Ct^{-1-\alpha/2+\beta/2}.
\end{eqnarray}
This completes the proof of (ii).
To prove (iii), as in \cite[Lemma 3.7]{Antjd2} we set $w^h(t)=tD^2_tu^h(t)$. Taking the derivative with respect to $t$ in both sides of \eqref{semi1} yields 
\begin{eqnarray}
\label{deri4}
D^2_tu^h(t)&=&A_h'(t)u^h(t)+A_h(t)D_tu^h(t)+P_h\frac{\partial F}{\partial t}\left(t,u^h(t)\right)\nonumber\\
&+&P_h\frac{\partial F}{\partial u}\left(t,u^h(t)\right)D_tu^h(t).
\end{eqnarray} 
Taking the derivative with respect to $t$  in both side of \eqref{deri4} yields
{\small
\begin{eqnarray}
\label{deri5}
D^3_tu^h(t)&=&A_h''(t)u^h(t)+2A_h'(t)D_tu^h(t)+A_h(t)D^2_tu^h(t)\nonumber\\
&+&P_h\frac{\partial^2F}{\partial t^2}\left(t,u^h(t)\right)D_tu^h(t)+2P_h\frac{\partial^2F}{\partial t\partial u}\left(t,u^h(t)\right)D_tu^h(t)\nonumber\\
&+&P_h\frac{\partial^2F}{\partial t\partial u}\left(t,u^h(t)\right)D_tu^h(t)+P_h\frac{\partial^2F}{\partial u^2}\left(t,u^h(t)\right)\left(D_tu^h(t), D_tu^h(t)\right).
\end{eqnarray}
}
Using \eqref{deri5} and \eqref{deri4} and rearranging  yields
{\small
\begin{eqnarray}
\label{deri6}
D_tw^h(t)&=&D_t^2u^h(t)+tD_t^3u^h(t)\nonumber\\
&=&A_h(t)w^h(t)+A_h'(t)u^h(t)+A_h(t)D_tu^h(t)+P_h\frac{\partial F}{\partial t}\left(t,u^h(t)\right)\nonumber\\
&+&P_h\frac{\partial F}{\partial u}\left(t,u^h(t)\right)D_tu^h(t)+tA_h''(t)u^h(t)+2tA_h'(t)D_tu^h(t)\nonumber\\
&+&tP_h\frac{\partial^2F}{\partial t^2}\left(t,u^h(t)\right)D_tu^h(t)+2tP_h\frac{\partial^2F}{\partial t\partial u}\left(t,u^h(t)\right)D_tu^h(t)\nonumber\\
&+&tP_h\frac{\partial^2F}{\partial t\partial u}\left(t,u^h(t)\right)D_tu^h(t)+tP_h\frac{\partial^2F}{\partial u^2}\left(t,u^h(t)\right)\left(D_tu^h(t),D_tu^h(t)\right).
\end{eqnarray}
}
By the Duhammel's principle, it follows  from \eqref{deri6} that
{\small
\begin{eqnarray}
\label{deri7}
w^h(t)&=&\int_0^tU_h(t,s)\left[A_h'(s)u^h(s)+A_h(s)D_su^h(s)+sA_h''(s)u^h(s)
+2sA_h'(s)D_su^h(s)\right]ds\nonumber\\
&+&\int_0^tU_h(t,s)\left[P_h\frac{\partial F}{\partial s}\left(s,u^h(s)\right)+P_h\frac{\partial F}{\partial u}\left(s,u^h(s)\right)D_su^h(s)\right]ds\nonumber\\
&+&\int_0^tU_h(t,s)sP_h\left[sP_h\frac{\partial^2F}{\partial s^2}\left(s,u^h(s)\right)D_su^h(s)+3\frac{\partial^2F}{\partial s\partial u}\left(s,u^h(s)\right)D_su^h(s)\right]ds\nonumber\\
&+&\int_0^tsU_h(t,s)\frac{\partial^2F}{\partial u^2}\left(s,u^h(s)\right)\left(D_su^h(s), D_su^h(s)\right)ds.
\end{eqnarray}
}
Taking the norm in both sides of \eqref{deri7} yields
{\small
\begin{eqnarray}
\label{deri8}
\Vert w^h(t)\Vert&\leq& \int_0^t\left\Vert U_h(t,s)A_h'(s)u^h(s)\right\Vert ds+\int_0^t\left\Vert U_h(t,s)A_h(s)D_su^h(s)\right\Vert ds\nonumber\\
&+&t\int_0^t\left\Vert U_h(t,s)A_h''(s)u^h(s)\right\Vert ds+2t\int_0^t\left\Vert U_h(t,s)D_su^h(s)\right\Vert ds\nonumber\\
&+&C\int_0^t\Vert D_su^h(s)\Vert ds+C\int_0^ts\Vert D_su^h(s)\Vert ds+C\int_0^ts\Vert D_su^h(s)\Vert^2ds.
\end{eqnarray}
}
Using  \lemref{lemderiv}, \lemref{pazylemma} and \lemref{regularitylemma} yields
\begin{eqnarray}
\label{inter8}
\int_0^t\left\Vert U_h(t,s)A_h'(s)u^h(s)\right\Vert ds\leq Ct^{\beta/2}.
\end{eqnarray}
Using (ii) and  \lemref{regularitylemma} yields 
\begin{eqnarray}
\label{inter9}
&&\int_0^t\left\Vert U_h(t,s)A_h(s)D_su^h(s)\right\Vert ds\nonumber\\
&\leq& \int_0^t\left\Vert U_h(t,s)(-A_h(s))^{1-\beta/2-\epsilon}\right\Vert_{L(H)}\left\Vert (-A_h(s))^{\beta/2-\epsilon}D_su^h(s)\right\Vert ds\nonumber\\
&\leq& C\int_0^t(t-s)^{-1+\beta/2-\epsilon}s^{\beta/2-\epsilon}ds\leq Ct^{-1+\beta/2-\epsilon}.
\end{eqnarray}
Using \lemref{lemderiv} and \lemref{regularitylemma} yields
\begin{eqnarray}
\label{inter10}
&&\int_0^t\left\Vert U_h(t,s)A_h''(s)u^h(s)\right\Vert ds\nonumber\\
&\leq&\int_0^t\left\Vert U_h(t,s)(-A_h(0))^{1-\beta/2+\epsilon}\right\Vert_{L(H)}\nonumber\\
&&\times\left\Vert (-A_h(0))^{-1+\beta-\epsilon}A_h''(s)(-A_h(0))^{-\beta/2+\epsilon}(-A_h(0))^{\beta/2-\epsilon}u^h(s)\right\Vert ds\nonumber\\
&\leq&C\int_0^t(t-s)^{-1+\beta/2-\epsilon}\left\Vert (-A_h(0))^{\beta/2-\epsilon}u^h(s)\right\Vert ds\nonumber\\
&\leq& C\int_0^t(t-s)^{-1+\beta/2-\epsilon}ds\nonumber\\
&\leq& Ct^{\beta/2-\epsilon}.
\end{eqnarray}
Using (i) yields 
\begin{eqnarray}
\label{inter11}
\int_0^ts\Vert D_su^h(s)\Vert^2ds\leq C\int_0^ts^{-1+\beta}ds\leq Ct^{\beta}.
\end{eqnarray}
Substituting \eqref{inter11}, \eqref{inter10}, \eqref{inter9} and \eqref{inter8} in \eqref{deri8} yields
\begin{eqnarray}
\Vert w^h(t)\Vert\leq Ct^{-1+\beta/2}.
\end{eqnarray}
This completes the proof of the lemma.
\end{proof}

For non commutative operator $H_j$ on Banach space, we define the following product
\begin{eqnarray}
\prod_{j=k}^mH_j=\left\{\begin{array}{ll}
H_mH_{m-1}\cdots H_k\quad\quad \text{if}\quad m\geq k,\\
\mathbf{I}\quad \hspace{3cm}\text{if}\quad m<k.
\end{array}
\right.
\end{eqnarray}
The following stability result is fundamental in our convergence analysis.
\begin{lemma}
\label{stabilite}
Let \assref{assumption1}, \assref{assumption2}, \assref{assumption3} and \assref{assumption4} be fulfilled. Then the following stability estimate holds
{\small
\begin{eqnarray}
\left\Vert \left(\prod_{j=k}^m e^{\left(A_{h,j}+J^h_j\right)\Delta t}\right)(-A_{h,k})^{\gamma}\right\Vert_{L(H)}&\leq& Ct_{m-k+1}^{-\gamma},\quad 0\leq k\leq m\leq M,
\end{eqnarray}
}
for any $\gamma\in[0,1)$.
\end{lemma}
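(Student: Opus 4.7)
The statement is a discrete analog of \lemref{evolutionlemma}(ii) for the Rosenbrock product $\prod_{j=k}^m e^{B_j\Delta t}$ with $B_j:=A_{h,j}+J^h_j$, viewed as an approximate evolution operator from $t_k$ to $t_{m+1}$ for the family $B(t)=A_h(t)+J^h(t)$. My plan proceeds in three stages: install uniform sectoriality and smoothing for $B_j$, derive a discrete Duhamel-type decomposition of the product, and close a discrete Gronwall inequality to extract the factor $t_{m-k+1}^{-\gamma}$.

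Because $J^h_j$ is $L(H)$-bounded by $K_3$ uniformly in $j$ and $h$ (\assref{assumption3}), $B_j$ is a bounded perturbation of the uniformly sectorial $A_{h,j}$ and hence itself uniformly sectorial with the same angle. Coercivity \eqref{prix} combined with \eqref{ellip2} yields $\langle -B_jv,v\rangle_H\ge(\lambda_0+\kappa)\|v\|^2$ on $V_h$ with $\lambda_0+\kappa>0$, so $e^{B_jt}$ is uniformly bounded, in fact contractive. The smoothing estimates of \rmref{remark1} thus transfer: $\|e^{B_jt}\|_{L(H)}\le C$ and $\|(-B_j)^\gamma e^{B_jt}\|_{L(H)}\le Ct^{-\gamma}$, uniformly in $j,h$ and $t\in(0,T]$. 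A Heinz--Kato/resolvent-integral argument exploiting the boundedness of $J^h_j$ then yields the uniform equivalence $\|(-A_{h,j})^\gamma(-B_j)^{-\gamma}\|_{L(H)}+\|(-B_j)^\gamma(-A_{h,j})^{-\gamma}\|_{L(H)}\le C$ for $\gamma\in[0,1)$; combined with \lemref{lemma0}, this allows me to interchange $(-A_{h,k})^\gamma$ and $(-B_j)^\gamma$ for any $j$ at bounded cost.

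Naive iteration of the single-step bound $\|e^{B_j\Delta t}(-A_{h,j})^\gamma\|\le C\Delta t^{-\gamma}$ only yields $C\Delta t^{-\gamma}$ for the whole product, which is far from $Ct_{m-k+1}^{-\gamma}$. To recover the product-length smoothing I would use the iterated Duhamel identity
\[
e^{B_\ell\Delta t}=e^{B_k\Delta t}+\int_0^{\Delta t}e^{B_\ell(\Delta t-s)}(B_\ell-B_k)\,e^{B_ks}\,ds,
\]
applied factor by factor. This rewrites $\prod_{j=k}^m e^{B_j\Delta t}$ as a leading single-generator term $e^{B_k n\Delta t}$ (with $n=m-k+1$) plus a sum of remainders each containing at least one insertion of $B_\ell-B_k$. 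Acting on $(-A_{h,k})^\gamma$, the leading term is bounded by $Ct_n^{-\gamma}$ via the single-semigroup smoothing and the equivalence of fractional powers established above. For each remainder I decompose $B_\ell-B_k=(A_{h,\ell}-A_{h,k})+(J^h_\ell-J^h_k)$: the bounded part is controlled by $\|J^h_\ell-J^h_k\|_{L(H)}\le 2K_3$, and the unbounded part is sandwiched between $(-A_{h,0})^{-\alpha_1}$ and $(-A_{h,0})^{-\alpha_2}$ with $\alpha_1+\alpha_2=1$ via \eqref{ref4}, the lost fractional smoothing being absorbed against the $(\Delta t-s)^{-\alpha_1}s^{-\alpha_2}$ singularity inside the Duhamel integral. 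Summing the remainders yields a discrete Gronwall inequality whose solution preserves the bound $Ct_n^{-\gamma}$.

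The main obstacle lies in this accounting: each insertion of $B_\ell-B_k$ provides a $\Delta t$-gain which must be balanced against the singularity produced when pulling $(-A_{h,k})^\gamma$ through a product of exponentials of different generators, and one has to verify that the resulting double sum remains $O(t_n^{-\gamma})$ rather than blowing up with $n$. The restriction $\gamma<1$ is essential here, as it keeps the kernel $t_{\ell-k}^{-\gamma}$ summable and permits the discrete Gronwall to close without a logarithmic loss, mirroring the threshold $\alpha<1$ in \lemref{evolutionlemma}(ii).
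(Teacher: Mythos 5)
Your overall architecture coincides with the paper's: compare the product with the frozen semigroup $e^{(t_{m+1}-t_k)B_k}$, $B_j:=A_{h,j}+J^h_j$ (whose smoothing $\Vert e^{tB_k}(-A_{h,k})^{\gamma}\Vert_{L(H)}\leq Ct^{-\gamma}$ is exactly what the paper imports from \cite[Lemma 9]{Antjd1}), expand the difference by a telescoping/Duhamel identity, and close with a discrete Gronwall argument. The gap is in the one step you yourself flag as ``the main obstacle'': the claim that each insertion of $B_\ell-B_k$ yields a $\Delta t$-gain, with the lost smoothing absorbed by a $(\Delta t-s)^{-\alpha_1}s^{-\alpha_2}$ Beta integral. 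This accounting does not close. First, \eqref{ref4} gives $\Vert(-A_{h,0})^{-\alpha_1}(A_{h,\ell}-A_{h,k})(-A_{h,0})^{-\alpha_2}\Vert\leq C\vert t_\ell-t_k\vert$, which is $O(T)$, not $O(\Delta t)$. Second, the Duhamel insertion lives inside the single subinterval of length $\Delta t$, so the left factor $e^{B_\ell(\Delta t-s)}(-A_{h,0})^{\alpha_1}$ only contributes $(\Delta t-s)^{-\alpha_1}$ integrated over $[0,\Delta t]$, i.e.\ $C\Delta t^{1-\alpha_1}$; combining with the trailing bound $\Vert(-A_{h,0})^{\alpha_2}e^{B_k(s+t_\ell-t_k)}(-A_{h,k})^{\gamma}\Vert\leq Ct_{\ell-k}^{-\alpha_2-\gamma}$, the $\ell$-th summand is of size $\Delta t^{1-\alpha_1}\,t_{\ell-k}^{\,\alpha_1-\gamma}$, and the sum over $\ell$ produces $C(t_{m-k+1}/\Delta t)^{\alpha_1}t_{m-k+1}^{1-\gamma}$, which blows up as $\Delta t\to0$ for every $\alpha_1>0$ (and the variant where you keep the genuine $s^{-\alpha_2}$ Beta singularity fares no better, giving $\sum_\ell t_{\ell-k}^{1-\gamma}\approx\Delta t^{-1}t_{m-k+1}^{2-\gamma}$).

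The repair --- and this is what the paper's proof, through \eqref{frozen4}--\eqref{frozen9} and \cite[Lemma 1]{Ostermann1}, actually encodes --- is to take $\alpha_1=0$, $\alpha_2=1$: write $(A_{h,\ell}-A_{h,k})=\bigl[(A_{h,\ell}-A_{h,k})(-A_{h,k})^{-1}\bigr](-A_{h,k})$ with the bracket of norm $C\vert t_\ell-t_k\vert=Ct_{\ell-k}$ by \eqref{ref3}, and let the full power $(-A_{h,k})$ be absorbed by the frozen semigroup over the \emph{accumulated} elapsed time $s+t_\ell-t_k\geq t_{\ell-k}$, so that $\Vert(-A_{h,k})e^{B_k(s+t_\ell-t_k)}(-A_{h,k})^{\gamma}\Vert\leq Ct_{\ell-k}^{-1-\gamma}$. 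The factor $t_{\ell-k}$ then cancels $t_{\ell-k}^{-1}$, the integration over $s\in[0,\Delta t]$ supplies the genuine $\Delta t$, each summand is $C\Delta t\,t_{\ell-k}^{-\gamma}$, and the sum is $O(t_{m-k+1}^{1-\gamma})=O(1)\leq Ct_{m-k+1}^{-\gamma}$. Equivalently, in the paper's formulation it is the coupled quantity $\bigl(e^{B_j\Delta t}-e^{B_k\Delta t}\bigr)e^{(t_j-t_k)B_k}$ that is $O(\Delta t)$, not the bare difference of exponentials, whose natural Cauchy-integral bound is only $C\vert t_j-t_k\vert$. Without this correction your discrete Gronwall inequality does not close; with it, your proof and the paper's become the same argument.
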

\begin{proof}
As in  \cite[Theorem 1]{Ostermann1}, the main idea is to compare the composition of the perturbed operator with the frozen operator
\begin{eqnarray}
\label{frozen1}
\prod_{j=k}^me^{\left(A_{h,k}+J^h_k\right)\Delta t}=e^{(t_{m+1}-t_k)\left(A_{h,k}+J^h_k\right)}.
\end{eqnarray}
Using \cite[Lemma 9]{Antjd1} yields the following estimate
\begin{eqnarray}
\label{frozen2}
\left\Vert \prod_{j=k}^me^{\left(A_{h,k}+J^h_k\right)\Delta t}(-A_{h,k})^{\gamma}\right\Vert_{L(H)}&=&\left\Vert e^{\left(A_{h,k}+J^h_k\right)t_{m-k+1}}(-A_{h,k})^{\gamma}\right\Vert_{L(H)}\nonumber\\
&\leq& Ct_{m-k+1}^{-\gamma}.
\end{eqnarray}
It remains to bound $\Delta^m_k(-A_{h,k})^{\gamma}$, where $\Delta^m_k$ is defined as follows
\begin{eqnarray}
\label{frozen3}
\Delta^m_k:=\prod_{j=k}^me^{\left(A_{h,j}+J^h_j\right)\Delta t}-\prod_{j=k}^me^{\left(A_{h,k}+J^h_k\right)\Delta t}.
\end{eqnarray}
Using the telescopic identity we obtain
{\small
\begin{eqnarray}
\label{frozen4}
&&\Delta^m_k\nonumber\\
&=&\sum_{j=k+1}^{m-1}\Delta ^m_{j+1}\left(e^{\left(A_{h,j}+J^h_j\right)\Delta t}-e^{\left(A_{h,k}+J^h_k\right)\Delta t}\right)e^{(t_j-t_k)\left(A_{h,k}+J^h_k\right)}\nonumber\\
&+&\sum_{j=k+1}^me^{(t_{m+1}-t_{j+1})\left(A_{h,k}+J^h_k\right)}\left(e^{\left(A_{h,j}+J^h_j\right)\Delta t}-e^{\left(A_{h,k}+J^h_k\right)\Delta t}\right)e^{(t_j-t_k)\left(A_{h,k}+J^h_k\right)}.\nonumber\\
\end{eqnarray}
}
Using the variation of parameter formula \cite[Chapter III, Corollary 1.7]{Engel} yields
\begin{eqnarray}
\label{frozen5}
e^{\left(A_{h,l}+J^h_l\right)\Delta t}=e^{A_{h,l}\Delta t}+\int_0^{\Delta t}e^{A_{h,l}(\Delta t-s)}J^h_le^{\left(A_{h,l}+J^h_l\right)s}ds.
\end{eqnarray}
It follows therefore from \eqref{frozen5} that
\begin{eqnarray}
\label{frozen6}
\left(e^{\left(A_{h,j}+J^h_j\right)\Delta t}-e^{\left(A_{h,k}+J^h_k\right)\Delta t}\right)
&=&\left(e^{A_{h,j}\Delta t}-e^{A_{h,k}\Delta t}\right)\nonumber\\
&+&\int_0^{\Delta t}e^{A_{h,j}(\Delta t-s)}J^h_je^{\left(A_{h,j}+J^h_j\right)s}ds\nonumber\\
&-&\int_0^{\Delta t}e^{A_{h,k}(\Delta t-s)}J^h_ke^{\left(A_{h,k}+J^h_k\right)s}ds\nonumber\\
&=:&IV_1+IV_2+IV_3.
\end{eqnarray}
Using the integral formula of Cauchy exactly as in  \cite[Lemma 1]{Ostermann1} yields
\begin{eqnarray}
\label{frozen7}
\Vert IV_1\Vert_{L(H)}=\left\Vert \left(e^{A_{h,j}\Delta t}-e^{A_{h,k}\Delta t}\right)\right\Vert_{L(H)}\leq C\Delta t.
\end{eqnarray}
Using \cite[Lemma 9]{Antjd1}, \assref{assumption2} and \assref{assumption3} yields
{\small
\begin{eqnarray}
\label{frozen8}
\Vert IV_2\Vert_{L(H)}+\Vert IV_3\Vert_{L(H)} &\leq&2\int_0^{\Delta t}\left\Vert e^{A_{h,k}(\Delta t-s)}\right\Vert_{L(H)}\Vert J^h_k\Vert_{L(H)}\left\Vert e^{\left(A_{h,k}+J^h_k\right)s}\right\Vert_{L(H)}ds\nonumber\\
&\leq& C\int_0^{\Delta t}ds\leq C\Delta t.
\end{eqnarray}
}
Substituting \eqref{frozen8} and \eqref{frozen7} in \eqref{frozen6} yields
\begin{eqnarray}
\label{frozen9}
\left\Vert \left(e^{\left(A_{h,j}+J^h_j\right)\Delta t}-e^{\left(A_{h,k}+J^h_k\right)\Delta t}\right)\right\Vert_{L(H)}\leq C\Delta t.
\end{eqnarray}
Inserting an appropriate power of $(-A_{h,k})^{\gamma}$ in \eqref{frozen4}, using triangle inequality and \eqref{frozen9} yields
\begin{eqnarray}
\label{frozen9}
&&\left\Vert \Delta ^m_k(-A_{h,k})^{\gamma}\right\Vert_{L(H)}\nonumber\\
&\leq&\sum_{j=k+1}^{m-1}\Vert \Delta ^m_{j+1}(-A_{h,k})^{\gamma}\Vert_{L(H)}\Vert (-A_{h,k})^{-\gamma}\Vert_{L(H)}\nonumber\\
&&\times\left\Vert \left(e^{\left(A_{h,j}+J^h_j\right)\Delta t}-e^{\left(A_{h,k}+J^h_k\right)\Delta t}\right)\right\Vert_{L(H)}\left\Vert e^{(t_j-t_k)\left(A_{h,k}+J^h_k\right)}(-A_{h,k})^{\gamma}\right\Vert_{L(H)}\nonumber\\
&+&\sum_{j=k+1}^m\left\Vert e^{(t_{m+1}-t_{j+1})\left(A_{h,k}+J^h_k\right)}\right\Vert_{L(H)}\left\Vert \left(e^{\left(A_{h,j}+J^h_j\right)\Delta t}-e^{\left(A_{h,k}+J^h_k\right)\Delta t}\right)\right\Vert_{L(H)}\nonumber\\
&&\times \left\Vert e^{(t_j-t_k)\left(A_{h,k}+J^h_k\right)}(-A_{h,k})^{\gamma}\right\Vert_{L(H)}\nonumber\\
&\leq& C\Delta t\sum_{j=k+1}^{m-1}\Vert \Delta^m_{j+1}(-A_{h,k})^{\gamma}\Vert_{L(H)} t_{j-k}^{-\gamma}+C\Delta t\sum_{j=k+1}^mt_{j-k}^{-\gamma}\nonumber\\
&\leq& C+C\Delta t\sum_{j=k+1}^{m-1}t_{j-k}^{-\gamma}\Vert \Delta^m_{j+1}(-A_{h,k})^{\gamma}\Vert_{L(H)}.
\end{eqnarray}
Applying the discrete Gronwall's lemma to \eqref{frozen9} yields
\begin{eqnarray}
\label{frozen10}
\left\Vert \Delta ^m_k(-A_{h,k})^{\gamma}\right\Vert_{L(H)}\leq C.
\end{eqnarray}
Using \eqref{frozen10} and \eqref{frozen2} completes the proof of \lemref{stabilite}.
\end{proof}

\begin{lemma}
\label{stabilitysolution}
Let Assumptions \ref{assumption2}, \ref{assumption1} and \ref{assumption3} be fulfilled. Then the numerical scheme \eqref{semi2} satisfies the following estimate
 \begin{eqnarray}
 \Vert u^h_m\Vert\leq R,\quad m\in\{0, 1, \cdots, M\},
 \end{eqnarray}
 where $R>0$ is independent of $h$, $m$, $M$ and $\Delta t$.
\end{lemma}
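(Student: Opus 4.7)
The plan is to iterate the integral form \eqref{semi2} to express $u^h_m$ explicitly in terms of $u^h_0$ and the low-order terms, then apply the stability of perturbed exponentials together with \assref{assumption3} to close a discrete Gronwall inequality.

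First I would unroll the one-step recursion \eqref{erem} to obtain
\begin{eqnarray*}
u^h_m &=& \left(\prod_{j=0}^{m-1}e^{\Delta t(A_{h,j}+J^h_j)}\right)P_h u_0 \\
&&+\, \Delta t \sum_{k=0}^{m-1} \left(\prod_{j=k+1}^{m-1}e^{\Delta t(A_{h,j}+J^h_j)}\right) \varphi_1\bigl(\Delta t(A_{h,k}+J^h_k)\bigr)\, r^h_k,
\end{eqnarray*}
where $r^h_k := a^h_k\bigl(t_k + \tfrac{\Delta t}{2}\bigr) + G^h_k\bigl(t_k + \tfrac{\Delta t}{2}, u^h_k\bigr)$. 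Taking norms and using \lemref{stabilite} with $\gamma=0$ controls the product of perturbed exponentials uniformly in $L(H)$. Using the integral definition \eqref{phi1} and the uniform boundedness of $e^{(A_{h,k}+J^h_k)s}$ on $L(H)$, which follows from \eqref{sectorial1} and the bounded-perturbation theorem for analytic semigroups combined with \eqref{remainder3}, one gets $\|\varphi_1(\Delta t(A_{h,k}+J^h_k))\|_{L(H)}\leq C$ uniformly in $h,k,\Delta t$.

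Next I would bound $r^h_k$ linearly in $\|u^h_k\|$. From \assref{assumption3} we have $\|a^h_k\|\leq K_3(1+\|u^h_k\|)$, hence $\|a^h_k(t_k+\Delta t/2)\|\leq C(1+\|u^h_k\|)$. Using the definition \eqref{remainder2} of $G^h_k$ together with the Lipschitz property \eqref{Lipschitz}, the bound \eqref{remainder3} on $J^h_k$, and the uniform boundedness of $P_h$, one obtains $\|G^h_k(t_k+\Delta t/2, u^h_k)\|\leq C(1+\|u^h_k\|)$. Collecting the estimates yields
\begin{eqnarray*}
\|u^h_m\| \;\leq\; C\|u_0\| + C\Delta t \sum_{k=0}^{m-1}\bigl(1 + \|u^h_k\|\bigr),
\end{eqnarray*}
and the discrete Gronwall lemma then produces the desired uniform bound $\|u^h_m\|\leq R$ with $R:=C\,e^{CT}(1+\|u_0\|)$, independent of $h$, $m$, $M$ and $\Delta t$.

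The main obstacle is controlling the composition of the product of non-commuting perturbed time-dependent exponentials $\prod_{j=k+1}^{m-1}e^{\Delta t(A_{h,j}+J^h_j)}$ with the operator $\varphi_1(\Delta t(A_{h,k}+J^h_k))$: since the frozen operators at different time levels do not commute, no naive semigroup identity is available. This is precisely the content of \lemref{stabilite} (with $\gamma=0$), whose proof via the telescoping identity and discrete Gronwall is where the genuine work lies; once that lemma is in hand, the present statement is a standard continuity-of-the-scheme argument.
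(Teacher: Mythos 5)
Your proposal is correct and follows essentially the same route as the paper: unroll the recursion into a product-of-perturbed-exponentials representation, invoke \lemref{stabilite} (with $\gamma=0$) for the stability of these products, bound the inhomogeneous terms $a^h_k$ and $G^h_k$ linearly in $\Vert u^h_k\Vert$ via \assref{assumption3}, and close with the discrete Gronwall lemma. The only cosmetic difference is that the paper keeps the $\varphi_1$ term in its integral form \eqref{semi2} rather than quoting a uniform bound on $\varphi_1$, which amounts to the same estimate.
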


\begin{proof}
Iterating the numerical solution \eqref{semi2} by substituting $u^h_j$, $j=m-1, \cdots, 1$ only in the first term of \eqref{semi2} by their expressions yields
{\small
\begin{eqnarray}
\label{dim1}
&&u^h_m=\left(\prod_{j=0}^{m-1}e^{\Delta t(A_{h,j}+J^h_j)}\right)u^h_0\\
&+&\sum_{k=0}^{m-1}\int_0^{\Delta t}\left(\prod_{j=m-k}^{m-1}e^{\Delta t(A_{h,j}+J^h_j)}\right)e^{(A_{h, m-k-1}+J^h_{m-k-1})(\Delta t-s)}a^h_{m-k-1}\left(t_{m-k-1}+\frac{\Delta t}{2}\right)ds\nonumber\\
&+&\sum_{k=0}^{m-1}\int_0^{\Delta t}\left(\prod_{j=m-k}^{m-1}e^{\Delta t(A_{h,j}+J^h_j)}\right)e^{(A_{h,m-k-1}+J^h_{m-k-1})(\Delta t-s)}\nonumber\\
&&G^h_{m-k-1}\left(t_{m-k-1}+\frac{\Delta t}{2}, u^h_{m-k-1}\right)ds.\nonumber
\end{eqnarray}
}
Taking the norm in both sides of \eqref{dim1}, using triangle inequality, \lemref{stabilite} and \assref{assumption3} yields
{\small
\begin{eqnarray}
\label{dim2}
\Vert u^h_m\Vert&\leq&\left\Vert\left(\prod_{j=0}^{m-1}e^{\Delta t(A_{h,j}+J^h_j)}\right)\right\Vert_{L(H)}\Vert u^h_0\Vert\\
&+&\sum_{k=0}^{m-1}\int_0^{\Delta t}\left\Vert\left(\prod_{j=m-k}^{m-1}e^{\Delta t(A_{h,j}+J^h_j)}\right)\right\Vert_{L(H)}\left\Vert e^{(A_{h, m-k-1}+J^h_{m-k-1})(\Delta t-s)}\right\Vert_{L(H)}\nonumber\\
&\times&\left\Vert a^h_{m-k-1}\right\Vert\left(t_{m-k-1}+\frac{\Delta t}{2}\right)ds\nonumber\\
&+&\sum_{k=0}^{m-1}\int_0^{\Delta t}\left\Vert\left(\prod_{j=m-k}^{m-1}e^{\Delta t(A_{h,j}+J^h_j)}\right)\right\Vert_{L(H)}\left\Vert e^{\left(A_{h,m-k-1}+J^h_{m-k-1}\right)(\Delta t-s)}\right\Vert_{L(H)}\nonumber\\
&\times&\left\Vert G^h_{m-k-1}\right\Vert_{L(H)}\left\Vert\left(t_{m-k-1}+\frac{\Delta t}{2}, u^h_{m-k-1}\right)\right\Vert ds\nonumber\\
&\leq& C\Vert u^h_0\Vert+C\sum_{k=0}^{m-1}\int_0^{\Delta t}\left(t_{m-k-1}+\frac{\Delta t}{2}\right)ds\nonumber\\
&+&C\sum_{k=0}^{m-1}\int_0^{\Delta t}\left[\left(t_{m-k-1}+\frac{\Delta t}{2}\right)+u^h_{m-k-1}\right]ds.
\end{eqnarray}
}
Using the fact that $t_{m-k-1}+\frac{\Delta t}{2}\leq T$ and $\Vert u^h_0\Vert\leq \Vert u_0\Vert$, it holds from \eqref{dim2} that
\begin{eqnarray}
\label{dim3}
\Vert u^h_m\Vert\leq C\Vert u_0\Vert +C+C\Delta t\sum_{k=0}^{m-1}\Vert u^h_k\Vert.
\end{eqnarray}
Applying the discrete Gronwall"s lemma to \eqref{dim3} yields
\begin{eqnarray}
\label{dim4}
\Vert u^h_m\Vert\leq C(1+\Vert u_0\Vert)\leq R,\quad m\in\{0, \cdots, M\}.
\end{eqnarray}
This completes the proof of \lemref{stabilitysolution}.
\end{proof}

\begin{lemma}
\label{lemmestrategie1}
Let Assumptions \ref{assumption2} and \ref{assumption3} be fulfilled. Then the fractional powers of $-(A_{h,k}+J^h_k)$ exist and the following estimate holds
\begin{eqnarray}
\Vert \left(-(A_{h, k}+J^h_k)\right)^{-\alpha}\Vert_{L(H)}\leq C,\quad \alpha>0,
\end{eqnarray}
with $C$ independent of $h$ and $k$.
\end{lemma}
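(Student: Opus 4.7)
The plan is to establish that $-(A_{h,k}+J^h_k)$ is sectorial and coercive, uniformly in $h$ and $k$, and then invoke the standard integral representation \eqref{fractional} to define the fractional powers and bound them.

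First, I would show uniform sectoriality of $-(A_{h,k}+J^h_k)$. Assumption \ref{assumption3} together with \eqref{remainder1} implies $\Vert J^h_k\Vert_{L(H)}\leq K_3$, with $K_3$ independent of $h$ and $k$ (this is essentially \eqref{remainder3} specialized to the operator norm). Combined with the uniform sectorial bound \eqref{sectorial1} for $A_{h,k}$, a standard bounded-perturbation argument (see e.g.\ Pazy, Chapter 3) yields a uniform resolvent estimate of the form $\Vert(\lambda\mathbf{I}-(A_{h,k}+J^h_k))^{-1}\Vert_{L(H)}\leq C/\vert\lambda\vert$ on a suitable sector $S_{\tilde\theta}$, with constants depending only on $C_2$, $\theta$ and $K_3$. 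Hence $A_{h,k}+J^h_k$ generates an analytic semigroup $e^{s(A_{h,k}+J^h_k)}$ on $V_h$ uniformly in $h,k$.

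Next, I would use the coercivity to get a \emph{uniform strict positivity} of $-(A_{h,k}+J^h_k)$. By \eqref{discrete2} and \eqref{ellip2}, for any $v\in V_h$,
\begin{eqnarray*}
\langle -A_{h,k}v,v\rangle_H = a\!\left(t_k+\tfrac{\Delta t}{2}\right)(v,v)\geq \lambda_0\Vert v\Vert_1^2\geq \lambda_0\Vert v\Vert^2=b_0\Vert v\Vert^2.
\end{eqnarray*}
Since $P_h$ is the self-adjoint $L^2$-projection and $P_hv=v$ for $v\in V_h$, the coercivity \eqref{prix} transfers to $J^h_k$:
\begin{eqnarray*}
-\langle J^h_k v,v\rangle_H = -\left\langle F'\!\left(t_k+\tfrac{\Delta t}{2},u^h_k\right)v,v\right\rangle_H \geq \kappa\Vert v\Vert^2.
\end{eqnarray*}
Adding these inequalities yields $\langle -(A_{h,k}+J^h_k)v,v\rangle_H\geq (b_0+\kappa)\Vert v\Vert^2$, and $b_0+\kappa>0$ by hypothesis. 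In particular the semigroup satisfies a uniform exponential decay
\begin{eqnarray*}
\left\Vert e^{s(A_{h,k}+J^h_k)}\right\Vert_{L(H)}\leq C\,e^{-\omega s},\quad s\geq 0,
\end{eqnarray*}
for some $\omega>0$ (taking $\omega$ slightly smaller than $b_0+\kappa$ to absorb angular constants coming from the sectorial representation).

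Finally, the fractional powers are defined exactly as in \eqref{fractional}:
\begin{eqnarray*}
\left(-(A_{h,k}+J^h_k)\right)^{-\alpha}=\frac{1}{\Gamma(\alpha)}\int_0^\infty s^{\alpha-1}e^{s(A_{h,k}+J^h_k)}\,ds,
\end{eqnarray*}
and the exponential decay gives
\begin{eqnarray*}
\left\Vert \left(-(A_{h,k}+J^h_k)\right)^{-\alpha}\right\Vert_{L(H)}\leq \frac{C}{\Gamma(\alpha)}\int_0^\infty s^{\alpha-1}e^{-\omega s}\,ds=\frac{C}{\omega^\alpha},
\end{eqnarray*}
which is the desired bound, with a constant independent of $h$ and $k$. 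The main obstacle is keeping the sectoriality constants and the exponential decay rate truly uniform in both $h$ and $k$; this is precisely what is bought by the uniform estimates \eqref{sectorial1} and \eqref{remainder3}, together with the uniform coercivity constant $b_0+\kappa>0$ coming from \eqref{ellip2} and \eqref{prix}.
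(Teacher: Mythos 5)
Your proof is correct, but it takes a genuinely different route from the paper's. The paper establishes the uniform exponential decay of $e^{s(A_{h,k}+J^h_k)}$ indirectly: it writes the perturbed semigroup via the variation-of-parameters formula, applies the generalized Gronwall lemma to get $\Vert e^{t(A_{h,k}+J^h_k)}\Vert_{L(H)}\leq Ct^{-\gamma}$, lets $t\to\infty$ to conclude the operator norm tends to zero, invokes the abstract characterization of uniform exponential stability from Engel--Nagel, and then — because that argument only yields constants $L_k,\omega_k$ depending on $k$ — needs an additional continuity-plus-Weierstrass argument for the growth bound over the compact set $[0,T]\times B[0,R]$, which in turn relies on the a priori bound $\Vert u^h_m\Vert\leq R$ of \lemref{stabilitysolution}. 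You instead exploit the coercivity hypotheses directly: \eqref{ellip2} transferred to $A_{h,k}$ via \eqref{discrete2} gives $\langle -A_{h,k}v,v\rangle_H\geq\lambda_0\Vert v\Vert^2$, the self-adjointness of $P_h$ transfers \eqref{prix} to $J^h_k$, and adding yields $\langle -(A_{h,k}+J^h_k)v,v\rangle_H\geq(\lambda_0+\kappa)\Vert v\Vert^2$ with $\lambda_0+\kappa>0$ by the standing assumption $\kappa>-b_0=-\lambda_0$; the standard energy (dissipativity) estimate then gives $\Vert e^{s(A_{h,k}+J^h_k)}\Vert_{L(H)}\leq e^{-(\lambda_0+\kappa)s}$ with \emph{explicit} constants that are manifestly uniform in $h$, $k$, $\Dt$. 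Your argument is shorter, avoids the compactness step and the dependence on \lemref{stabilitysolution}, and is the place where the hypothesis $\kappa>-b_0$ is actually put to work (the paper's proof never visibly uses it). Two cosmetic remarks: the dissipativity argument already gives the decay with constant $1$, so no ``angular constants'' need absorbing; and your opening sectoriality paragraph, while correct, is not needed for this particular lemma since on the finite-dimensional space $V_h$ the energy estimate alone controls the matrix exponential and hence the integral $\frac{1}{\Gamma(\alpha)}\int_0^\infty s^{\alpha-1}e^{s(A_{h,k}+J^h_k)}\,ds$ for every $\alpha>0$.
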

\begin{proof}
First of all we claim that $e^{(A_{h,k}+J^h_k)t}$ is uniformly exponentially stable. In fact,  from the variation of parameters formula \cite[Chapter 3, Corollary 1.7]{Engel} or \cite[Page 77, Section 3.1]{Pazy} it holds that
 \begin{eqnarray}
 \label{strategie2}
 e^{(A_{h,k}+J^h_k)t}=e^{A_{h,k}t}+\int_0^te^{A_{h,k}(t-s)}J^h_ke^{(A_{h,k}+J^h_k)s}ds,\quad t\geq 0.
 \end{eqnarray}
 Taking the norm in both sides of \eqref{strategie2}, inserting  appropriately power of $(-A_{h,k})^{-\gamma}(-A_{h,k})^{\gamma}$ (with $\gamma\in(0, 1)$), using the uniformly boundedness of $(-A_{h,k})^{-\gamma}$, \assref{assumption3} and \eqref{smooth2}  yields
 \begin{eqnarray}
 \label{strategie3}
 &&\Vert e^{(A_{h,k}+J^h_k)t}\Vert_{L(H)}\nonumber\\
 &\leq& \Vert (-A_{h,k})^{-\gamma}\Vert_{L(H)}\Vert (-A_{h,k})^{\gamma}e^{A_{h,k}t}\Vert_{L(H)}\nonumber\\
 &+&\int_0^t\Vert (-A_{h,k})^{-\gamma}\Vert_{L(H)}\Vert(-A_{h,k})^{\gamma} e^{A_{h,k}(t-s)}\Vert_{L(H)}\Vert J^h_k\Vert_{L(H)}\Vert e^{(A_{h,k}+J^h_k)s}\Vert_{L(H)}ds\nonumber\\
 &\leq& Ct^{-\gamma}+C\int_0^t(t-s)^{-\gamma}\Vert e^{(A_{h,k}+J^h_k)s}\Vert_{L(H)}ds.
 \end{eqnarray}
 Applying the generalized Gronwall's lemma \cite[Lemma 3.5.2]{Henry} to \eqref{strategie3} yields
 \begin{eqnarray}
 \label{strategie4}
 \Vert e^{(A_{h,k}+J^h_k)t}\Vert_{L(H)}\leq Ct^{-\gamma}=\frac{C}{t^{\gamma}},\quad \gamma\in(0,1),\quad t\geq 0.
\end{eqnarray}  
 Taking the limit as $t$ goes to $\infty$ in \eqref{strategie4} yields
 \begin{eqnarray}
 \lim_{t\longrightarrow\infty} \Vert e^{(A_{h,k}+J^h_k)t}\Vert_{L(H)}=0.
 \end{eqnarray}
 Employing \cite[Proposition 1.7, Chapter V, Page 299]{Engel}, it follows that $e^{(A_{h, k}+J^h_k)t}$ is  exponentially stable, i.e. there exists two positive constants $L_k$ and $\omega_k$ such that
 \begin{eqnarray}
 \label{strategie5}
 \Vert e^{(A_{h,k}+J^h_k)t}\Vert_{L(H)}\leq L_ke^{-\omega_kt},\quad t\geq 0.
 \end{eqnarray}
  Let $B[0, R]:=\{v\in H: \Vert v\Vert\leq R\}$, where $R$ is defined in \lemref{stabilitysolution}. 
 More generally, for every $\tau\in[0, T]$ and $v\in B[0, R]$ there two positive constants $L_{\tau, v}$ and $\omega_{\tau,v}$ such that
 \begin{eqnarray}
 \label{strategie7a}
 \Vert e^{(A_{h}(\tau)+J^h_{\tau,v})t}\Vert_{L(H)}\leq L_{\tau,v}e^{-\omega_{\tau,v}t},\quad t\geq 0,
 \end{eqnarray}
 where $J^h_{\tau, v}:=P_h\frac{\partial F}{\partial v}(\tau, v)$. 
 Note that the function  $(\tau, v)\longmapsto \omega_{\tau, v}$ is continuous. This  follows from the definition of the growth bound $\omega_{\tau, v}$
 \begin{eqnarray}
 \omega_{\tau, v}:=\inf_{t>0}\frac{1}{t}\log\left\Vert e^{(A_h(\tau)+J^h_{\tau, v})t}\right\Vert_{L(H)},\quad \tau\in[0, T],\quad v\in B[0, R].
\end{eqnarray}   
Due to \eqref{strategie7a}, the following constant is well defined
\begin{eqnarray}
\label{strategie7b}
L'_{\tau, v}:=\sup_{t\geq 0}\left\Vert e^{\left(A_h(\tau)+J^h_{\tau, v}\right)t}\right\Vert_{L(H)}e^{\omega_{\tau, v}t},\quad \tau\in[0, T],\quad v\in B[0, R].
\end{eqnarray}
It follows from the above definition \eqref{strategie7b} that the function $(\tau, v)\longmapsto L'_{\tau, v}$  is continuous. 
Therefore by Weierstrass's theorem there exist two positive constants $L'$ and $\omega$ such that
 \begin{eqnarray}
 L'=\sup_{\tau\in[0, T], v\in B(0, R)}L'_{\tau, v},\quad \omega=\inf_{\tau\in[0, T], v\in B(0, R)}\omega_{\tau, v}.
 \end{eqnarray}
 Consequently, we have
 \begin{eqnarray}
 \label{strategie6}
 \Vert e^{(A_{h,k}+J^h_k)t}\Vert_{L(H)}\leq L'e^{-\omega t},\quad t\geq 0,\quad k\in\{0,1,\cdots,M\}.
 \end{eqnarray}
 This proves the claim. Let us now finish the proof of \lemref{lemmestrategie1}.
  Assumptions \ref{assumption2} and \ref{assumption3} imply that $-(A_{h, k}+J^h_k)$ is a   positive operator. Therefore its fractional powers are well defined and are given by 
 \begin{eqnarray}
 \label{strategie1}
 \left(-(A_{h, k}+J^h_k)\right)^{-\alpha}=\frac{1}{\Gamma(\alpha)}\int_0^{\infty}t^{\alpha-1}e^{(A_{h,k}+J^h_k)t}dt, 
 \end{eqnarray}
 where $\Gamma(\alpha)$ is a gamma function, see e.g. \cite{Henry,Pazy,Engel}.
 Taking the norm in both sides of \eqref{strategie1} and using \eqref{strategie6} yields
 \begin{eqnarray}
 \left\Vert\left(-(A_{h, k}+J^h_k)\right)^{-\alpha}\right\Vert_{L(H)}&\leq&\frac{L'}{\Gamma(\alpha)}\int_0^{\infty}t^{\alpha-1}e^{-\omega t}dt=\frac{L'\omega^{2-\alpha}}{\Gamma(\alpha)}\int_0^{\infty}s^{\alpha-1}e^{-s}ds\nonumber\\
 &=&L'\omega^{2-\alpha}<\infty.
 \end{eqnarray}
 This completes the proof of the lemma.
\end{proof}

\begin{lemma}
\label{lemmestrategie2}
Let Assumptions \ref{assumption2} and \ref{assumption3} be fulfilled. Then the following estimate holds
\begin{eqnarray}
\label{strategie8a}
\left\Vert \left(-(A_{h,k}+J^h_k)\right)^{-\alpha}(-A_{h,k})^{\alpha}\right\Vert_{L(H)}\leq C,\quad \alpha\in[0, 1]\\
\label{strategie8b}
\left\Vert (-A_{h,k})^{\alpha}\left(-(A_{h,k}+J^h_k)\right)^{-\alpha}\right\Vert_{L(H)}\leq C,\quad \alpha\in[0, 1].
\end{eqnarray}
\end{lemma}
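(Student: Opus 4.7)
The strategy is to first establish both estimates at the endpoint $\alpha=1$ by an elementary algebraic manipulation, and then extend to $\alpha\in(0,1)$ by interpolation. The endpoint $\alpha=0$ is trivial as both operators reduce to the identity.

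For $\alpha=1$, I would exploit the perturbation identity
\begin{eqnarray*}
\left(-(A_{h,k}+J^h_k)\right)^{-1}(-A_{h,k}) = \left(-(A_{h,k}+J^h_k)\right)^{-1}\bigl(-(A_{h,k}+J^h_k)+J^h_k\bigr) = \mathbf{I} + \left(-(A_{h,k}+J^h_k)\right)^{-1}J^h_k,
\end{eqnarray*}
and the symmetric identity $(-A_{h,k})\left(-(A_{h,k}+J^h_k)\right)^{-1} = \mathbf{I} + J^h_k \left(-(A_{h,k}+J^h_k)\right)^{-1}$. Taking operator norms and using \lemref{lemmestrategie1} (with $\alpha=1$) together with the uniform bound $\|J^h_k\|_{L(H)}\leq K_3$ from \assref{assumption3} (which carries over unchanged from $J^h_k=P_h\partial_u F(t_m+\Delta t/2,u^h_m)$ via the uniform boundedness of $P_h$ and $u^h_m\in B[0,R]$ by \lemref{stabilitysolution}), both quantities are bounded by $1+CK_3$, uniformly in $h$ and $k$.

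For the intermediate case $\alpha\in(0,1)$, I would appeal to the classical Heinz--Kato inequality for fractional powers of positive sectorial operators: if $B_1=-A_{h,k}$ and $B_2=-(A_{h,k}+J^h_k)$ are sectorial and satisfy $\|B_1 B_2^{-1}\|_{L(H)}+\|B_2 B_1^{-1}\|_{L(H)}\leq C$, then for every $\alpha\in[0,1]$
\begin{eqnarray*}
\|B_1^{\alpha} B_2^{-\alpha}\|_{L(H)}+\|B_2^{\alpha} B_1^{-\alpha}\|_{L(H)}\leq C_{\alpha},
\end{eqnarray*}
with a constant depending only on $\alpha$ and on the sectoriality constants. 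The sectoriality of $B_1$ is inherited from \assref{assumption2} uniformly in $h,k$; that of $B_2$ follows from the analyticity of the perturbed semigroup $e^{(A_{h,k}+J^h_k)t}$ together with the uniform exponential stability \eqref{strategie6} obtained in the proof of \lemref{lemmestrategie1}. Alternatively, one may argue via complex interpolation of the analytic family $T(z):=B_2^{-z}B_1^{z}$ on the strip $0\leq \mathrm{Re}(z)\leq 1$ using the three-lines theorem, the endpoint bounds being $\|T(it)\|_{L(H)}\leq C e^{\vartheta |t|}$ (by uniform sectoriality, which controls imaginary powers) and $\|T(1+it)\|_{L(H)}\leq C$ (from the $\alpha=1$ case just established).

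The main obstacle is obtaining these bounds \emph{uniformly in} $h$ and $k$, since the constants in Heinz--Kato or in the three-lines interpolation a priori depend on the sectoriality angles and resolvent constants of $B_1$ and $B_2$. For $B_1$, uniformity in $h,k$ is already guaranteed by \eqref{sectorial1} and \lemref{lemma0}. For $B_2$, I would verify that the perturbation $J^h_k$ is uniformly bounded in $h,k$ (by \assref{assumption3} and \lemref{stabilitysolution}), so a standard perturbation argument gives a common sectorial sector and a common resolvent bound for all $B_2$; once this is in place, the Heinz--Kato constant is uniform, completing the proof.
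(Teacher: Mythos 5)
Your proposal follows essentially the same route as the paper: the case $\alpha=1$ is handled by the identity $-A_{h,k}=-(A_{h,k}+J^h_k)+J^h_k$ together with the uniform bound on $J^h_k$ and \lemref{lemmestrategie1}, the case $\alpha=0$ is trivial, and the intermediate exponents are obtained by interpolation. The paper is terser on the interpolation step (it simply cites references), whereas you spell out the Heinz--Kato/three-lines argument and the uniformity of the constants in $h$ and $k$, but the substance is the same.
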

\begin{proof}
We only prove \eqref{strategie8a} since the proof of \eqref{strategie8b} is similar.
For $\alpha=1$, using triangle inequality, \assref{assumption3} and \lemref{lemmestrategie1} it holds that
{\small
\begin{eqnarray}
\label{strategie8}
&&\left\Vert \left(-(A_{h,k}+J^h_k)\right)^{-1}(-A_{h,k})\right\Vert_{L(H)}\nonumber\\
&\leq& \left\Vert \left(-(A_{h,k}+J^h_k)\right)^{-1}\left(-(A_{h,k}+J^h_k)\right)\right\Vert_{L(H)}+\left\Vert \left(-(A_{h,k}+J^h_k)\right)^{-1}\right\Vert_{L(H)}\Vert J^h_k\Vert_{L(H)}\nonumber\\
&\leq& C.
\end{eqnarray}
}
Note that \eqref{strategie8a} obviously holds for $\alpha=0$. As in \cite{Antjd1,Antjd2,Thomee2} the intermediates cases follow by interpolation technique.
\end{proof}

\begin{lemma}
\label{utilemma}
For $k=0,\cdots, M-1$ and $t_k\leq t\leq t_{k+1}$, let us set
\begin{eqnarray}
\label{reste1}
L^h_k(t)&:=&\left(A_h(t)-A_{h,k}\right)u^h(t)-a^h_k(t_k+t)\nonumber\\
&+&G^h_k\left(t,u^h(t)\right)-G^h_k\left(t_k+\frac{\Delta t}{2},u^h(t_k)\right).
\end{eqnarray}
Under \assref{assumption1}, \assref{assumption2}, \assref{assumption3} and \assref{assumption4},  provided that $L^h_k$ is twice differentiable on $(t_k, t_{k+1})$, the following estimates hold
\begin{eqnarray}
\label{interessant1}
\left\Vert (-A_h(0))^{-\epsilon}\left(L^h_k\right)'\left(t_k+\frac{\Delta t}{2}\right)\right\Vert&\leq& Ct_k^{-1+\beta/2},\quad k\geq 1,\\
\label{interessant2}
\left\Vert \left(-(A_{h,k}+J^h_k)\right)^{-\epsilon}\left(L^h_k\right)'\left(t_k+\frac{\Delta t}{2}\right)\right\Vert&\leq& Ct_k^{-1+\beta/2},\quad k\geq 1,\\
\label{interessant3}
\left\Vert \left(-A_h(0)\right)^{-1}\left(L^h_k\right)''\left(t\right)\right\Vert&\leq& Ct^{-2+\beta/2},\quad t>0,\\
\label{interessant4}
\left\Vert \left(-(A_{h,k}+J^h_k)\right)^{-\epsilon}\left(L^h_k\right)''\left(t\right)\right\Vert&\leq& Ct^{-2+\beta/2},\quad t>0,
\end{eqnarray}
where $\epsilon>0$ is a positive number, small enough. 
\end{lemma}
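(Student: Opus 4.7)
The plan is to differentiate $L^h_k$ once and twice in $t$ and bound each resulting term by combining the regularity estimates of \lemref{lemma0a}, \lemref{lemderiv}, \lemref{regularitylemma} and \lemref{ancien} with \assref{assumption3}. The guiding identities are $G^h_k(t,u) = P_hF(t,u) - J^h_k u - a^h_k t$, together with $J^h_k = P_h\partial_uF(t_k+\Delta t/2, u^h(t_k))$, $a^h_k = P_h\partial_tF(t_k+\Delta t/2, u^h(t_k))$ and $A_h(t_k+\Delta t/2) = A_{h,k}$, which produce several key cancellations at $t = t_k+\Delta t/2$.

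First I would prove \eqref{interessant1}. Differentiating gives
\[
(L^h_k)'(t) = A_h'(t)u^h(t) + (A_h(t)-A_{h,k})D_tu^h(t) + [P_h\partial_tF(t,u^h(t)) - a^h_k] + [P_h\partial_uF(t,u^h(t)) - J^h_k]D_tu^h(t).
\]
At $t = t_k+\Delta t/2$ the second term vanishes identically. The two bracketed $F$-differences are controlled by the mean-value theorem in the second variable (using $\|\partial_{tu}^2F\|, \|\partial_{uu}^2F\| \leq K_3$ from \assref{assumption3}) together with the bound $\|u^h(t_k+\Delta t/2) - u^h(t_k)\|\leq C\Delta t\, t_k^{-1+\beta/2}$ coming from \lemref{ancien}(i); a further factor $D_tu^h(t_k+\Delta t/2)$ (again of order $t_k^{-1+\beta/2}$) still yields a contribution of order $t_k^{-1+\beta/2}$ after absorbing positive powers of $T$ and $\Delta t$. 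For the $A_h'u^h$-term I insert $(-A_h(0))^{-(1-\epsilon)}(-A_h(0))^{1-\epsilon}$, apply \lemref{lemderiv} with $\alpha_1=\epsilon$, $\alpha_2=1-\epsilon$, and then \lemref{regularitylemma} to bound $\|(-A_h(0))^{1-\epsilon}u^h(t)\|\leq Ct^{\beta/2-1+\epsilon}$, the excess $\epsilon$-power of $t$ being absorbed in $T^\epsilon$.

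For \eqref{interessant3}, a second differentiation yields (dropping the arguments of $F$ for brevity)
\[
(L^h_k)''(t) = A_h''u^h + 2A_h'D_tu^h + (A_h(t)-A_{h,k})D_t^2u^h + P_h\partial_t^2F + 2P_h\partial_{tu}^2F\,D_tu^h + P_h\partial_{uu}^2F(D_tu^h, D_tu^h) + [P_h\partial_uF - J^h_k]D_t^2u^h.
\]
After pre-multiplication by $(-A_h(0))^{-1}$, every term should be bounded by $Ct^{-2+\beta/2}$: the first three respectively via \lemref{lemderiv} with $(\alpha_1,\alpha_2)=(1,0)$; via \lemref{lemderiv} with $(\alpha_1,\alpha_2)=(1,0)$ combined with \lemref{ancien}(i); and via \lemref{lemma0a} (which supplies the factor $|t-t_k-\Delta t/2|\leq\Delta t$) together with \lemref{ancien}(iii) to absorb the singularity of $D_t^2u^h$. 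The three pure $F$-derivative terms follow from \assref{assumption3}, \lemref{regularitylemma} and \lemref{ancien}(i), and the last term reuses the mean-value argument of the first step together with \lemref{ancien}(iii). Estimates \eqref{interessant2} and \eqref{interessant4} then follow from \eqref{interessant1} and \eqref{interessant3} by inserting $(-A_h(0))^{\epsilon}(-A_h(0))^{-\epsilon}$ and invoking \lemref{lemmestrategie2} and \lemref{lemma0} to transfer between $(-(A_{h,k}+J^h_k))^{-\epsilon}$ and $(-A_h(0))^{-\epsilon}$ uniformly in $h$ and $k$.

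The main obstacle is taming the cross-terms in $(L^h_k)''$: the quadratic contribution $P_h\partial_{uu}^2F(D_tu^h, D_tu^h)$ scales as $t^{-2+\beta}$ and must be reconciled with the target $t^{-2+\beta/2}$ by exploiting $t\leq T$ and $\beta\leq 2$, while $[P_h\partial_uF - J^h_k]D_t^2u^h$ requires mean-value control in both time and space variables before the sharp second-order regularity of \lemref{ancien}(iii) can be deployed. Verifying that no hidden $\Delta t^{-1}$ factor appears in any of these bounds is the delicate technical point of the argument.
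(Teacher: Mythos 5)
Your proposal follows essentially the same route as the paper's proof: differentiate $L^h_k$ once and twice, exploit the cancellation of $\left(A_h(t)-A_{h,k}\right)D_tu^h$ at the midpoint $t_k+\Delta t/2$, control $A_h'u^h$ and $A_h''u^h$ by inserting powers of $-A_h(0)$ and combining \lemref{lemderiv} with \lemref{regularitylemma}, bound the $F$-terms via \assref{assumption3} together with \lemref{ancien}, and transfer to $-(A_{h,k}+J^h_k)$ via \lemref{lemma0} and \lemref{lemmestrategie2}. The only deviations are cosmetic and harmless: you invoke the mean-value theorem where the paper simply uses the uniform bounds on the partial derivatives of $F$, your first-derivative formula drops one of the two copies of $-a^h_k$ (one from $-a^h_k(t_k+t)$ and one from the $-a^h_k t$ inside $G^h_k$, both bounded), and your second-derivative formula retains the difference terms $\left(A_h(t)-A_{h,k}\right)D_t^2u^h$ and $\left[P_h\partial_uF-J^h_k\right]D_t^2u^h$ explicitly, which if anything is more careful than the paper's displayed expression and leads to the same bounds.
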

\begin{proof}
Let us start with the estimate of \eqref{interessant1}. 
Taking the derivative in both sides of \eqref{reste1}, using  \eqref{remainder2}  and \eqref{remainder1} yields
\begin{eqnarray}
\label{vend1}
(L^h_k)'(t)&=& A_h'(t)u^h(t)+\left(A_h(t)-A_{h,k}\right)D_tu^h(t)+P_h\frac{\partial F}{\partial t}\left(t,u^h(t)\right)\nonumber\\
&+&P_h\frac{\partial F}{\partial u}\left(t,u^h(t)\right)D_tu^h(t)-P_h\frac{\partial F}{\partial u}\left(t_k+\frac{\Delta t}{2},u^h_k\right)D_tu^h(t)\nonumber\\
&-&P_h\frac{\partial F}{\partial t}\left(t_k+\frac{\Delta t}{2}, u^h_k\right)-a^h_k.
\end{eqnarray}
Taking the norm in both sides of \eqref{vend1}, using \lemref{utilemma},  \assref{assumption3}, \lemref{regularitylemma}, \lemref{lemderiv}, \lemref{ancien}
and the fact that $(-A_h(0))^{-\epsilon}$ is bounded yields
{\small
\begin{eqnarray}
\label{vend2}
&&\left\Vert (-A_h(0))^{-\epsilon}\left(L^h_k\right)'\left(t_k+\frac{\Delta t}{2}\right)\right\Vert\nonumber\\
&\leq& \left\Vert (-A_h(0))^{-\epsilon} A_h'\left(t_k+\frac{\Delta t}{2}\right)u^h\left(t_k+\frac{\Delta t}{2}\right)\right\Vert+C\left\Vert P_h\frac{\partial F}{\partial t}\left(t_k+\frac{\Delta t}{2}, u^h\left(t_k+\frac{\Delta t}{2}\right)\right)\right\Vert\nonumber\\
&+&C\left\Vert P_h\frac{\partial F}{\partial u}\left(t_k+\frac{\Delta t}{2}, u^h\left(t_k+\frac{\Delta t}{2}\right)\right)\right\Vert_{L(H)}\left\Vert D_tu^h\left(t_k+\frac{\Delta t}{2}\right)\right\Vert+C\left\Vert P_h\frac{\partial F}{\partial t}\left(t_k+\frac{\Delta t}{2}, u^h_k\right)\right\Vert\nonumber\\
&+&C\left\Vert P_h\frac{\partial F}{\partial u}\left(t_k, u^h_k\right)\right\Vert_{L(H)}\left\Vert D_tu^h\left(t_k+\frac{\Delta t}{2}\right)\right\Vert+C\left\Vert \frac{\partial F}{\partial t}\left(t_k+\frac{\Delta t}{2}, u^h_k\right)\right\Vert\nonumber\\
&\leq& \left\Vert (-A_h(0))^{-\epsilon}A_h'\left(t_k+\frac{\Delta t}{2}\right)(-A_h(0))^{-1+\epsilon}\right\Vert_{L(H)}\left\Vert (-A_h(0))^{1-\epsilon}u^h\left(t_k+\frac{\Delta t}{2}\right)\right\Vert\nonumber\\
&+&C+C\left\Vert u^h\left(t_k+\frac{\Delta t}{2}\right)\right\Vert+C\left\Vert D_tu^h\left(t_k+\frac{\Delta t}{2}\right)\right\Vert+C\nonumber\\
&\leq&C\left(t_k+\frac{\Delta t}{2}\right)^{-1+\epsilon+\beta/2}+C\left(t_k+\frac{\Delta t}{2}\right)^{-1+\beta/2}\leq Ct_k^{-1+\beta/2}.
\end{eqnarray}
}
This completes the proof of \eqref{interessant1}. 

Let us now prove \eqref{interessant2}. Inserting an appropriate power of $-A_{h, k}$, using \eqref{interessant1}, Lemmas \ref{lemma0} and \ref{lemmestrategie2} yields
\begin{eqnarray}
&&\left\Vert \left(-(A_{h,k}+J^h_k)\right)^{-\epsilon}\left(L^h_k\right)'\left(t_k+\frac{\Delta t}{2}\right)\right\Vert\nonumber\\
&\leq& \left\Vert \left(-(A_{h,k}+J^h_k)\right)^{-\epsilon}(-A_{h,k})^{\epsilon}\right\Vert_{L(H)}\left\Vert\left(-A_{h,k}\right)^{-\epsilon}\left(L^h_k\right)'\left(t_k+\frac{\Delta t}{2}\right)\right\Vert\nonumber\\
&\leq& Ct_k^{-1+\beta/2}.
\end{eqnarray}
This completes the proof of \eqref{interessant2}. Let us complete the proof of the lemma with \eqref{interessant3}.
Taking the derivative in both sides of \eqref{vend1} yields
{\small
\begin{eqnarray}
\label{vend3}
\left(L^h_k\right)''(t)&=&A_h''(t)u^h(t)+2A_h'(t)D_tu^h(t)+A_h(t)D_t^2u^h(t)
+P_h\frac{\partial^2F}{\partial t^2}\left(t,u^h(t)\right)\nonumber\\
&+&2P_h\frac{\partial^2F}{\partial t\partial u}\left(t, u^h(t)\right)D_tu^h(t)+P_h\frac{\partial^2F}{\partial u^2}\left(t,u^h(t)\right)\left(D_tu^h(t), D_tu^h(t)\right).
\end{eqnarray}
}
Inserting $(-A_h(0))^{-1}$ in \eqref{vend3}, taking the norm in both sides,  using \lemref{lemderiv}, \lemref{regularitylemma}, \lemref{ancien} and the fact that $(-A_h(0))^{-1}$ is bounded yields
\begin{eqnarray}
\label{vend4}
&&\Vert \left(-A_h(0))^{-1}(L^h_k\right)''(t)\Vert\nonumber\\
&\leq&\Vert (-A_h(0))^{-1}A_h''(t)\Vert_{L(H)}\Vert u^h(t)\Vert+2\Vert (-A_h(0))^{-1}A_h'(t)\Vert_{L(H)}\Vert D_tu^h(t)\Vert\nonumber\\
&+&\Vert (-A_h(0))^{-1}A_h(t)\Vert_{L(H)}\Vert D^2_tu^h(t)\Vert+C\left\Vert\frac{\partial^2F}{\partial t^2}\left(t,u^h(t)\right)\right\Vert\nonumber\\
&+&C\left\Vert\frac{\partial^2F}{\partial t\partial u}\left(t,u^h(t)\right)\right\Vert_{L(H)}\Vert D_tu^h(t)\Vert+C\left\Vert\frac{\partial^2F}{\partial t\partial u}\left(t,u^h(t)\right)\right\Vert_{L(H)}\left\Vert D_tu^h(t)\right\Vert\nonumber\\
&+&C\left\Vert\frac{\partial^2F}{\partial u^2}\left(t,u^h(t)\right)\right\Vert_{L(H\times H, H)}\left\Vert D_t^2u^h(t)\right\Vert\nonumber\\
&\leq& C+Ct^{-1+\beta/2}+Ct^{-2+\beta/2}\leq Ct^{-2+\beta/2}.
\end{eqnarray}
The proof of \eqref{interessant4} is similar to that of \eqref{interessant2}.
This completes the proof of \lemref{utilemma}.
\end{proof}

\begin{lemma}
\label{bonlemma}
Let \assref{assumption2} be fulfilled,  let $m\in\{0, 1, \cdots, M\}$ and $0<t\leq T$. Then the following estimate holds
\begin{eqnarray}
\label{utile1}
\left\Vert \left(-(A_{h,m}+J^h_m)\right)^{\alpha}e^{\left(A_{h,m}+J^h_m\right)t}\right\Vert_{L(H)}&=&
\left\Vert e^{\left(A_{h,m}+J^h_m\right)t}\left(-(A_{h,m}+J^h_m)\right)^{\alpha}\right\Vert_{L(H)}\nonumber\\
&\leq& Ct^{-\alpha},\quad \alpha\in[0, 1].
\end{eqnarray}
Moreover, for
 $0\leq \alpha_1\leq\alpha_2\leq 1$  and  any $0\leq t\leq T$, the following estimate holds
{\small
\begin{eqnarray}
\label{utile2}
\left\Vert (-(A_{h,m}+J^h_m))^{-\alpha_1}\varphi_1(\Delta t(A_{h,m}+J^h_m))(-(A_{h,m}+J^h_m))^{\alpha_2}
\right\Vert_{L(H)}\leq C\Delta t^{\alpha_1-\alpha_2}.
\end{eqnarray}
}
\end{lemma}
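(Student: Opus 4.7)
The plan is to establish both estimates by first showing that $B_m := A_{h,m}+J^h_m$ generates an analytic semigroup on $H$ with smoothing estimate
\begin{equation*}
\Vert (-B_m)^{\alpha} e^{B_m t}\Vert_{L(H)} \leq C t^{-\alpha},\quad t>0,
\end{equation*}
for every $\alpha\in[0,1]$, with constant $C$ \emph{independent} of $h$ and $m$. This is exactly \eqref{utile1}. The estimate \eqref{utile2} will then follow from the integral representation \eqref{phi1} combined with the commutativity of $(-B_m)^{\alpha}$ and $e^{B_m t}$, which is automatic from the functional calculus of the single operator $B_m$.

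For \eqref{utile1}, I would invoke the classical result that a bounded perturbation of a sectorial operator is sectorial (see e.g.\ \cite[Chapter 3]{Pazy}). The uniform discrete sectoriality \eqref{sectorial1} guarantees that $A_{h,m}$ is sectorial with constants independent of $h$ and $m$, while \assref{assumption3} gives $\Vert J^h_m\Vert_{L(H)}\leq K_3$ uniformly. Hence $B_m$ is itself sectorial with uniform constants, and, analogously to \eqref{smooth2}, the semigroup $e^{B_m t}$ is analytic and admits the desired smoothing estimate for $\alpha\in[0,1)$. The boundary case $\alpha=1$ is treated via $(-B_m)e^{B_m t}=-\frac{d}{dt}e^{B_m t}$ together with the standard bound $\Vert \frac{d}{dt}e^{B_m t}\Vert_{L(H)}\leq Ct^{-1}$ for analytic semigroups. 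The equality of the two norms in \eqref{utile1} is immediate from the commutativity of $(-B_m)^{\alpha}$ with $e^{B_m t}$.

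For \eqref{utile2}, substituting \eqref{phi1} and using the mentioned commutativity yields
\begin{equation*}
(-B_m)^{-\alpha_1}\varphi_1(\Delta t\, B_m)(-B_m)^{\alpha_2}=\frac{1}{\Delta t}\int_0^{\Delta t}(-B_m)^{\alpha_2-\alpha_1}e^{B_m(\Delta t-s)}\,ds.
\end{equation*}
Setting $\alpha:=\alpha_2-\alpha_1\in[0,1)$, \eqref{utile1} bounds the integrand by $C(\Delta t-s)^{-\alpha}$, and the resulting integral evaluates to $\frac{C}{1-\alpha}\Delta t^{1-\alpha}$; division by $\Delta t$ produces the claimed $C\Delta t^{\alpha_1-\alpha_2}$. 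The critical case $\alpha_2-\alpha_1=1$ (i.e.\ $\alpha_1=0$, $\alpha_2=1$) I would treat separately via the algebraic identity $\varphi_1(z)z=e^z-1$, which gives $\varphi_1(\Delta t\, B_m)(-B_m)=-\Delta t^{-1}(e^{\Delta t B_m}-\mathbf{I})$; the triangle inequality combined with the uniform bound $\Vert e^{\Delta t B_m}\Vert_{L(H)}\leq C$ on $[0,T]$ coming from \lemref{lemmestrategie1} then closes the estimate.

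The main obstacle throughout is ensuring that every constant is uniform in both $h\in(0,1]$ and $m\in\{0,\dots,M\}$. This uniformity propagates from (i) the uniform sectoriality \eqref{sectorial1} of $A_h(t)$, (ii) the uniform bound $\Vert J^h_m\Vert_{L(H)}\leq K_3$ from \assref{assumption3}, and (iii) the uniform constants $L'$ and $\omega$ in \lemref{lemmestrategie1}. Once this uniformity is in place, the remaining steps reduce to routine convolution estimates together with invocations of the functional calculus of $B_m$.
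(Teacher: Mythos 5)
Your proposal is correct, and for \eqref{utile1} it takes a genuinely different route from the paper. The paper does \emph{not} argue via sectoriality of the perturbed operator: it first proves the case $\alpha=1$ by splitting $-(A_{h,m}+J^h_m)$ via the triangle inequality against the unperturbed semigroup (giving $\Vert e^{A_{h,m}t}(-(A_{h,m}+J^h_m))\Vert_{L(H)}\leq Ct^{-1}$), then transfers this to $e^{(A_{h,m}+J^h_m)t}$ through the variation-of-parameters formula \eqref{strategie2} and a Gronwall argument, handles $\alpha=0$ trivially, and interpolates for intermediate $\alpha$. Your bounded-perturbation-of-a-sectorial-operator argument is more classical and direct, and it buys you a cleaner treatment of the singular prefactor $t^{-\alpha}$ (the paper's Gronwall step \eqref{utile1c} involves an inhomogeneity $Ct^{-1}$ whose integral is not even finite near the origin, which your route sidesteps entirely). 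One point you should make explicit rather than leave implicit: a bounded perturbation shifts the vertex of the sector, so the mere sectoriality of $B_m=A_{h,m}+J^h_m$ does not by itself give the existence of $(-B_m)^{\alpha}$ nor the smoothing estimate without an exponential prefactor; for that you need $-B_m$ to be a positive operator uniformly in $h$ and $m$, which is exactly what the coercivity hypothesis \eqref{prix} and \lemref{lemmestrategie1} supply --- you cite that lemma only for uniformity of constants, but it is what legitimizes the fractional powers in the first place. For \eqref{utile2} your computation coincides with the paper's, except that you additionally treat the endpoint $\alpha_2-\alpha_1=1$ via the identity $\varphi_1(z)z=e^z-1$; the paper's single integral bound $\Delta t^{-1}\int_0^{\Delta t}(\Delta t-s)^{\alpha_1-\alpha_2}\,ds$ silently diverges in that case, so your separate argument there is an improvement, not a redundancy.
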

\begin{proof}
Let us start with the proof of \eqref{utile1}. Note that for $\alpha=1$, using Assumption \ref{assumption2} and \ref{assumption3}, we have 
\begin{eqnarray}
\label{utile1a}
\Vert e^{A_{h,m}t}(-(A_{h,m}+J^h_m))\Vert_{L(H)}&\leq& \Vert e^{A_{h,m}t}A_{h,m}\Vert_{L(H)}+\Vert e^{A_{h,m}t}J^h_m\Vert_{L(H)}\nonumber\\
&\leq& Ct^{-1}+C\leq Ct^{-1}.
\end{eqnarray}
From \eqref{strategie2}, it holds that
\begin{eqnarray}
\label{utile1b}
e^{(A_{h,m}+J^h_m)t}(-(A_{h,m}+J^h_m))&=&e^{A_{h,m}t}(-(A_{h,m}+J^h_m))\\
&+&\int_0^te^{A_{h,m}(t-s)}J^h_me^{(A_{h,m}+J^h_m)s}(-(A_{h,m}+J^h_m))ds.\nonumber
\end{eqnarray}
Taking the norm in both sides of \eqref{utile1b} and using \eqref{utile1a} yields
\begin{eqnarray}
\label{utile1c}\left\Vert e^{(A_{h,m}+J^h_m)t}(-(A_{h,m}+J^h_m))\right\Vert_{L(H)}&\leq& Ct^{-1}\\
&+&C\int_0^t\left\Vert e^{(A_{h,m}+J^h_m)s}(-(A_{h,m}+J^h_m))\right\Vert_{L(H)}ds.\nonumber
\end{eqnarray}
Applying the Gronwall's lemma to \eqref{utile1c} yields
\begin{eqnarray}
\label{utile1d}
\left\Vert e^{(A_{h,m}+J^h_m)t}(-(A_{h,m}+J^h_m))\right\Vert_{L(H)}&\leq& Ct^{-1}.
\end{eqnarray}
Note that \eqref{utile1} obviously holds for $\alpha=0$. The intermediate cases therefore follow by interpolation technique and the proof of \eqref{utile1} is completes.
Let us now prove \eqref{utile2}. 
 From \eqref{phi1}, it holds that
\begin{eqnarray}
\label{utile4}
&&(-(A_{h,m}+J^h_m))^{-\alpha_1}\varphi_1\left(\Delta t\left(A_{h,m}+J^h_m\right)\right)(-(A_{h,m}+J^h_m))^{\alpha_2}\nonumber\\
&=&\frac{1}{\Delta t}\int_0^{\Delta t}e^{\left(A_{h,m}+J^h_m\right)(\Delta t-s)}(-(A_{h,m}+J^h_m))^{\alpha_2-\alpha_1}ds.
\end{eqnarray}
Taking the norm in both sides of \eqref{utile4} and using \eqref{utile1} yields
{\small
\begin{eqnarray}
\Vert (-A_h(0))^{-\alpha_1}\varphi_1\left(\Delta t\left(A_{h,m}+J^h_m\right)\right)(-A_h(0))^{\alpha_2}\Vert_{L(H)}&\leq& C\Delta t^{-1}\int_0^{\Delta t}(\Delta t-s)^{\alpha_1-\alpha_2}ds\nonumber\\
&\leq& C\Delta t^{\alpha_1-\alpha_2}.
\end{eqnarray}
}
This proves  \eqref{utile2}, and the proof of \lemref{bonlemma} is completed. 
\end{proof}

The following lemma can be found in \cite{Stig2}.
\begin{lemma}
\label{lemmastig}
For all $\alpha_1, \alpha_2>0$ and $\alpha\in[0,1)$, there exist two positive constants $C_{\alpha_1,\alpha_2}$ and $C_{\alpha,\alpha_2}$  such that
\begin{eqnarray}
\label{stig}
\Delta t\sum_{j=1}^mt_{m-j}^{-1+\alpha_1}t_j^{-1+\alpha_2}&\leq& C_{\alpha_1,\alpha_2}t_m^{-1+\alpha_1+\alpha_2},\\
 \Delta t\sum_{j=1}^mt_{m-j}^{-\alpha}t_j^{-1+\alpha_2}&\leq& C_{\alpha,\alpha_2}t_m^{-\alpha+\alpha_2}.
\end{eqnarray}
\end{lemma}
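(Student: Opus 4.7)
Both inequalities are discrete analogues of classical Beta function estimates; for instance the first one corresponds to
\begin{eqnarray*}
\int_0^{t_m}(t_m-s)^{-1+\alpha_1}s^{-1+\alpha_2}\,ds = B(\alpha_1,\alpha_2)\,t_m^{-1+\alpha_1+\alpha_2}.
\end{eqnarray*}
The plan is to split the sum at $j_0=\lfloor m/2\rfloor$ into a ``left'' piece $1\le j\le j_0$ and a ``right'' piece $j_0<j\le m$, and to treat the two halves symmetrically by freezing whichever factor is kept bounded away from $0$ and comparing the remaining one-dimensional Riemann sum to an integral.

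For the left piece, the bound $m-j\ge m/2$ gives $t_{m-j}\ge t_m/2$, hence $t_{m-j}^{-1+\alpha_1}\le C t_m^{-1+\alpha_1}$ (the subcases $\alpha_1\ge 1$ and $\alpha_1<1$ are handled separately but both yield the same bound). For the remaining sum $\Delta t\sum_{j=1}^{j_0} t_j^{-1+\alpha_2}$, the function $s\mapsto s^{-1+\alpha_2}$ is monotone on $(0,\infty)$; a Riemann sum comparison of the form $\Delta t\, t_j^{-1+\alpha_2}\le \int_{t_{j-1}}^{t_j}s^{-1+\alpha_2}\,ds$ if $\alpha_2<1$ (and the analogous upper Riemann sum bound if $\alpha_2\ge 1$) yields $\Delta t\sum_{j=1}^{j_0}t_j^{-1+\alpha_2}\le C t_m^{\alpha_2}$. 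Multiplying the two bounds gives the desired $C t_m^{-1+\alpha_1+\alpha_2}$ on the left piece. The right piece is treated by the same argument with the roles swapped: $t_j\ge t_m/2$ makes $t_j^{-1+\alpha_2}\le C t_m^{-1+\alpha_2}$, and $\Delta t\sum_{j_0<j\le m} t_{m-j}^{-1+\alpha_1}\le C t_m^{\alpha_1}$. Adding the two contributions gives the first inequality. The second inequality follows from exactly the same splitting, with $-1+\alpha_1$ replaced by $-\alpha\in(-1,0]$; integrability of the limiting Beta integral $\int_0^1(1-u)^{-\alpha}u^{-1+\alpha_2}\,du$ is guaranteed by $\alpha<1$ and $\alpha_2>0$.

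\noindent\textbf{Main obstacle.} The delicate point is the behaviour at the endpoint $j=m$ (or $j=0$ after the change of index), where one of the factors $t_{m-j}^{-1+\alpha_1}$ or $t_j^{-1+\alpha_2}$ formally involves $t_0=0$ and the Riemann sum comparison is not directly available. These isolated boundary terms must be handled separately: their contribution is at most $C\Delta t^{\alpha_1}$ or $C\Delta t^{\alpha_2}$, which can be absorbed into the global bound using $\Delta t\le t_m$ and adjusting the constant. Equivalently, one restricts the effective summation range to $1\le j\le m-1$ and verifies the edge cases $m=1$ (and, in the second inequality, $\alpha=0$) by direct computation. Once this technicality is dealt with, the estimate is entirely analogous to the continuous Beta integral and the constants $C_{\alpha_1,\alpha_2}$, $C_{\alpha,\alpha_2}$ can be made uniform in $\Delta t$ and $m$.
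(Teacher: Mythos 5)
Your proof is correct and is essentially the paper's own argument: the paper simply states that the first estimate follows "from the comparison with the integral $\int_0^{t}(t-s)^{-1+\alpha_1}s^{-1+\alpha_2}\,ds$", and your splitting at $j_0=\lfloor m/2\rfloor$ together with the one-sided Riemann-sum comparison on each half is exactly the standard way of executing that comparison (your explicit treatment of the singular endpoint $j=m$, where $t_0^{-1+\alpha_1}$ is undefined for $\alpha_1<1$ and the sum must be read as running to $m-1$, is in fact more careful than the paper's). The only cosmetic difference is in the second estimate, which the paper obtains as a direct corollary of the first by writing $-\alpha=-1+(1-\alpha)$, i.e.\ taking $\alpha_1=1-\alpha\in(0,1]$, whereas you rerun the splitting argument; both are fine.
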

\begin{proof}
The proof of the first estimate of \eqref{stig} follows from the comparison with the following integral
\begin{eqnarray}
\int_0^t(t-s)^{-1+\alpha_1}s^{-1+\alpha_2}ds.
\end{eqnarray}
The proof of the second estimate of \eqref{stig} is a consequence of the first estimate.
\end{proof}

\subsection{Proof of \thmref{mainresult1}}
\label{proof_main}
We split the error term in two parts via triangle inequality as follows
\begin{eqnarray}
\Vert u(t_m)-u^h_m\Vert\leq \Vert u(t_m)-u^h(t_m)\Vert+\Vert u^h(t_m)-u^h_m\Vert=:V_1+V_2.
\end{eqnarray}
The space error $V_1$ is estimated in \propref{proposition2}. It remains to estimate the time error $V_2$. 
The initial value problem \eqref{semi} in the subinterval  $[t_m, t_{m+1}]$ can be written in the following form
{\small
\begin{eqnarray}
\frac{du^h}{dt}&=&\left[A_{h,m}+J^h_m\right]u^h(t)+a^h_mt+G^h_m\left(t_m+\frac{\Delta t}{2},u^h(t_m)\right)\nonumber\\
&+&\left(A_h(t)-A_{h,m}\right)u^h(t)+G^h_m(t,u^h(t))-G^h_m\left(t_m+\frac{\Delta t}{2},u^h(t_m)\right).
\end{eqnarray}
}
Consequently, by the variation of constant formula, we have the following representation of the exact solution
{\small
\begin{eqnarray}
\label{semi3}
&&u^h(t_{m+1})\nonumber\\
&=&e^{\left(A_{h,m}+J^h_m\right)\Delta t}u^h(t_m)+\int_0^{\Delta t}e^{\left(A_{h,m}+J^h_m\right)(\Delta t-s)}L^h_m(s+t_m)ds\nonumber\\
&+&\int_0^{\Delta t}e^{\left(A_{h,m}+J^h_m\right)(\Delta t-s)}\left[G^h_m\left(t_m+\frac{\Delta t}{2},u^h(t_m)\right)+a^h_m(t_m+s)\right]ds
\end{eqnarray}
}
where $L^h_k(t)$ is defined in \lemref{utilemma}. 
Let $e^h_{m+1}:=u^h_{m+1}-u^h(t_{m+1})$ be the time error at  $t_{m+1}$ and $\delta^h_{m+1}$ be the defect defined by
\begin{eqnarray}
\label{defect}
\delta^h_{m+1}:=\int_0^{\Delta t}e^{\left(A_{h,m}+J^h_m\right)(\Delta t-s)}L^h_m(s+t_m)ds.
\end{eqnarray}
Taking the difference between \eqref{semi2} and \eqref{semi3} yields
{\small
\begin{eqnarray}
\label{recursion}
e^h_{m+1}&=&e^{\left(A_{h,m}+J^h_m\right)\Delta t}e^h_m-\delta^h_{m+1}\nonumber\\
&+&\Delta t\varphi
_1\left(\Delta t(A_{h,m}+J^h_m)\right)\left[G^h_m\left(t_m+\frac{\Delta t}{2},u^h_m\right)-G^h_m\left(t_m+\frac{\Delta t}{2},u^h(t_m)\right)\right].
\end{eqnarray}
}
Iterating the error recursion \eqref{recursion} and using the fact that $e^h_0=0$ yields 
{\small
\begin{eqnarray}
e^h_{m}&=&\sum_{k=0}^{m-1}S^h_{m-1,k+1}\left[\Delta t\varphi_1\left(\Delta t(A_{h,k}+J^h_k)\right)\left(G^h_k\left(t_k+\frac{\Delta t}{2},u^h_k\right)-G^h_k\left(t_k+\frac{\Delta t}{2},u^h(t_k)\right)\right)-\delta^h_{k+1}\right]\nonumber\\
&=&\Delta t\sum_{k=0}^{m-1}S^h_{m-1,k+1}\varphi_1\left(\Delta t(A_{h,k}+J^h_k)\right)\left(G^h_k\left(t_k+\frac{\Delta t}{2},u^h_k\right)-G^h_k\left(t_k+\frac{\Delta t}{2},u^h(t_k)\right)\right)\nonumber\\
&-&\sum_{k=0}^{m-1}S_{m-1,k+1}^h\varphi_1\left(\Delta t\left(A_{h,k}+J^h_k\right)\right)\delta^h_{k+1}\nonumber\\
&=:&J_1+J_2,
\end{eqnarray}
}
where
\begin{eqnarray}
S^h_{m, k}:=\left(\prod_{j=k}^me^{\Delta t(A_{h, j}+J^h_j)}\right),\quad m, k\in\mathbb{N}.
\end{eqnarray}
Using triangle inequality, \lemref{stabilite} and \eqref{remainder4} yields 
\begin{eqnarray}
\Vert J_1\Vert\leq C\Delta t\sum_{k=0}^{m-1}\Vert S^h_{m-1,k+1}\Vert_{L(H)}\Vert e^h_k\Vert\leq C\Delta t\sum_{k=0}^{m-1}\Vert e^h_k\Vert.
\end{eqnarray}
We therefore obtain the following estimate
\begin{eqnarray}
\label{cast0}
\Vert e^h_{m}\Vert\leq C\Delta t\sum_{k=0}^{m-1}\Vert e^h_k\Vert+\Vert J_2\Vert.
\end{eqnarray}
Assuming that the map $L^h_k$ is twice differentiable on $(t_k, t_{k+1})$, we obtain the following Taylor expansion
{\small
\begin{eqnarray}
\label{Taylor}
L^h_k(s+t_k)&=&\left(s-\frac{\Delta t}{2}\right)\left(L^h_k\right)'\left(t_k+\frac{\Delta t}{2}\right)\nonumber\\
&+&\left(s-\frac{\Delta t}{2}\right)^2\int_0^1(1-\sigma)\left(L^h_k\right)''\left(t_k+\frac{\Delta t}{2}+\sigma\left(s-\frac{\Delta t}{2}\right)\right)d\sigma,
\end{eqnarray}
}
where $0<s<\Delta t$. Let the linear operator $\varphi_2$ be defined as follows
\begin{eqnarray}
\label{phi2}
\quad\varphi_2\left(\Delta t\left(A_{h,m}+J^h_m\right)\right)&:=&\frac{1}{\Delta t^2}\int_0^{\Delta t}e^{\left(A_{h,m}+J^h_m\right)(\Delta t-s)}sds. 
\end{eqnarray}
The functions $\varphi_1$ and $\varphi_2$ satisfy the following relation
\begin{eqnarray}
\label{relation}
\varphi_2(z)=\frac{\varphi_1(z)-1}{z}.
\end{eqnarray}
Note that the operators $\varphi_1$ and $\varphi_2$ defined respectively in \eqref{phi1} and \eqref{phi2} also satisfy the following relation
\begin{eqnarray}
\label{Oster}
&&\varphi_2\left(\Delta t\left(A_{h,m}+J^h_m\right)\right)-\frac{1}{2}\varphi_1\left(\Delta t\left(A_{h,m}+J^h_m\right)\right)\nonumber\\
&=&\Delta t\left(A_{h,m}+J^h_m\right)\chi\left(\Delta t\left(A_{h,m}+J^h_m\right)\right),
\end{eqnarray}
where $\chi\left(\Delta t\left(A_{h,m}+J^h_m\right)\right)$ is a bounded linear operator. In particular, as in \cite[(20)]{Ostermann1} or \cite[(2.8b)]{Gonza1},
one can easily check by using \cite[Lemma 9]{Antjd1} that the following estimates hold for any $\gamma\geq 0$
{\small
\begin{eqnarray}
\label{ganza1}
\Vert \varphi_1\left(\Delta t\left(A_{h,m}+J^h_m\right)\right)\Vert_{L(H)}+\Vert \varphi_2\left(\Delta t\left(A_{h,m}+J^h_m\right)\right)\Vert_{L(H)}&\leq& C,\\
\label{gonza2}
\Vert \left(-(A_{h,k}+J^h_k)\right)^{-\gamma}\chi\left(\Delta t\left(A_{h,m}+J^h_m\right)\right)\left(-(A_{h,k}+J^h_k)\right)^{\gamma}\Vert_{L(H)}&\leq& C.
\end{eqnarray}
}
%Moreover, the following estimate holds
%{\small
%\begin{eqnarray}
%\label{gonza3}
%\Vert (-(A_{h,k}+J^h_k))^{-\gamma_1}\varphi_1\left(\Delta t\left(A_{h,m}+J^h_m\right)\right)(-(A_{h,k}+J^h_k))^{\gamma_2}\Vert_{L(H)}&\leq& C\Delta t^{\gamma_2-\gamma_1},
%\end{eqnarray}
%}
%where $\gamma_1, \gamma_2\in (0, 1)$.
Taking in account \eqref{Taylor} and \eqref{Oster}, the defect \eqref{defect} can be written as follows
{\small
\begin{eqnarray}
\label{repre1}
\delta^h_k&=&\Delta t^2\left(\varphi_2\left(\Delta t\left(A_{h,k}+J^h_k\right)\right)-\frac{1}{2}\varphi_1\left(\Delta t\left(A_{h,k}+J^h_k\right)\right)\right)\left(L^h_k\right)'\left(t_k+\frac{\Delta t}{2}\right)\\
&+&\int_0^{\Delta t}e^{(\Delta t-s)\left(A_{h,k}+J^h_k\right)}\left(s-\frac{\Delta t}{2}\right)^2\int_0^1(1-\sigma)\left(L^h_k\right)''\left(t_k+\frac{\Delta t}{2}+\sigma\left(s-\frac{\Delta t}{2}\right)\right)d\sigma ds.\nonumber
\end{eqnarray}
}
Substituting \eqref{Oster} in \eqref{repre1} yields
{\small
\begin{eqnarray}
\label{repre2}
\delta^h_k&=&\Delta t^3\left(A_{h,k}+J^h_k\right)\chi\left(\Delta t\left(A_{h,k}+J^h_k\right)\right)\left(L^h_k\right)'\left(t_k+\frac{\Delta t}{2}\right)\nonumber\\
&+&\int_0^{\Delta t}e^{(\Delta t-s)\left(A_{h,k}+J^h_k\right)}\left(s-\frac{\Delta t}{2}\right)^2\int_0^1(1-\sigma)\left(L^h_k\right)''\left(t_k+\frac{\Delta t}{2}+\sigma\left(s-\frac{\Delta t}{2}\right)\right)d\sigma ds.\nonumber\\
&=:&\delta^{(1)h}_k+\delta^{(2)h}_k.
\end{eqnarray}
}
Before proceeding further, we claim that
\begin{eqnarray}
\label{fact2}
\left\Vert \left(-(A_{h,k}+J^h_k)\right)^{-1}\delta^{(2)h}_k\right\Vert \leq C\Delta t^3t_k^{-2+\beta/2}.
\end{eqnarray}
In fact, using \lemref{utilemma}, \lemref{lemma0} and \cite[Lemma 9]{Antjd1} it holds that
{\small
\begin{eqnarray}
\label{mardi1}
&&\left\Vert \left(-(A_{h,k}+J^h_k)\right)^{-1}\delta^{(2)h}_k\right\Vert\nonumber\\
&\leq& C\int_0^{\Delta t}\left\Vert e^{(\Delta t-s)\left(A_{h,k}+J^h_k\right)}\right\Vert_{L(H)}\left(s-\frac{\Delta t}{2}\right)^2\nonumber\\
&&\int_0^1(1-\sigma)\left\Vert \left(-(A_{h,k}+J^h_k)\right)^{-1} \left(L^h_k\right)''\left(t_k+\frac{\Delta t}{2}+\sigma\left(s-\frac{\Delta t}{2}\right)\right)\right\Vert d\sigma ds\\
&\leq& C\int_0^{\Delta t}\left(s-\frac{\Delta t}{2}\right)^2\int_0^1(1-\sigma)\left\Vert \left(-(A_{h,k}+J^h_k)\right)^{-1} \left(L^h_k\right)''\left(t_k+\frac{\Delta t}{2}+\sigma\left(s-\frac{\Delta t}{2}\right)\right)\right\Vert d\sigma ds.\nonumber
\end{eqnarray}
}
Since 
\begin{eqnarray}
\frac{\Delta t}{2}+\sigma\left(s-\frac{\Delta t}{2}\right)\geq 0,\quad s\in[0,\Delta t],\quad \sigma\in[0,1],
\end{eqnarray}
it follows from \lemref{utilemma} that
\begin{eqnarray}
\label{mardi2}
\left\Vert \left(-(A_{h,k}+J^h_k)\right)^{-1} \left(L^h_k\right)''\left(t_k+\frac{\Delta t}{2}+\sigma\left(s-\frac{\Delta t}{2}\right)\right)\right\Vert \leq Ct_k^{-2+\beta/2}, 
\end{eqnarray}
for $s\in[0,\Delta t]$ and $\quad \sigma\in[0,1]$.
Substituting \eqref{mardi2} in \eqref{mardi1} yields
\begin{eqnarray}
\left\Vert \left(-(A_{h,k}+J^h_k)\right)^{-1} \delta^{(2)h}_k\right\Vert&\leq& C\int_0^{\Delta t}\int_0^1(1-\sigma)\left(s-\frac{\Delta t}{2}\right)^2t_k^{-2+\beta/2}d\sigma ds\nonumber\\
&\leq& C\Delta t^3t_k^{-2+\beta/2}.
\end{eqnarray}
We can also easily check that
\begin{eqnarray}
\label{very1}
\left\Vert \left(-(A_{h,k}+J^h_k)\right)^{-1-\epsilon}\delta_k^{(1)h}\right\Vert\leq C\Delta t^3t_k^{-1+\beta/2}.
\end{eqnarray}
%In fact, it follows from \eqref{repre2} that
%\begin{eqnarray}
%\label{fact1a}
%(-A_h(0))^{-1-\epsilon/2}\delta^{(1)h}_k&=&\Delta t^3(-A_h(0))^{-1-\epsilon/2}(A_{h,k}+J^h_k)(-A_{h}(0))^{\epsilon/2}\nonumber\\
%&&(-A_h(0))^{-\epsilon/2}\chi\left(\Delta t(A_{h, k}+J^h_k)\right)(-A_h(0))^{\epsilon/2}\nonumber\\
%&&(-A_h(0))^{-\epsilon/2}(L^h_k)'\left(t_k+\frac{\Delta t}{2}\right).
%\end{eqnarray}
%Taking the norm in \eqref{fact1a} yields
%\begin{eqnarray}
%&&\left\Vert (-A_h(0))^{-1-\epsilon/2}\delta^{(1)h}_k\right\Vert\nonumber\\
%&\leq& \Delta t^3\left\Vert (-A_h(0))^{-1-\epsilon/2}(A_{h,k}+J^h_k)(-A_{h}(0))^{\epsilon/2}\right\Vert_{L(\mathcal{D}((-A(0))^{\epsilon/2}, H)}\nonumber\\
%&\times&\left\Vert (-A_h(0))^{-\epsilon/2}\chi\left(\Delta t(A_{h, k}+J^h_k)\right)(-A_h(0))^{\epsilon}\right\Vert_{L\left(H, \mathcal{D}\left((-A_h(0))^{\epsilon/2}\right)\right)}\nonumber\\
%&\times& \left\Vert(-A_h(0))^{-\epsilon}(L^h_k)'\left(t_k+\frac{\Delta t}{2}\right)\right\Vert
%\end{eqnarray}
%Note that using Assumptions \ref{assumption2} and \ref{assumption3}, it holds that
%\begin{eqnarray}
%\left\Vert (-A_h(0))^{-1-\epsilon/2}(A_{h,k}+J^h_k)(-A_{h}(0))^{\epsilon/2}\right\Vert_{L(\mathcal{D}((-A(0))^{\epsilon/2}, H)}\leq C.
%\end{eqnarray}
%by using \lemref{utilemma}, \eqref{gonza2} and \assref{assumption5} that
In fact, employing \lemref{utilemma} and \eqref{gonza2}, it holds that
\begin{eqnarray}
\label{fact1}
&&\left\Vert \left(-(A_{h,k}+J^h_k)\right)^{-1-\epsilon}\delta_k^{(1)h}\right\Vert\nonumber\\
&\leq& C\Delta t^3\left\Vert \left(-(A_{h,k}+J^h_k)\right)^{-\epsilon}\chi\left(\Delta t\left(A_{h,k}+J^h_k\right)\right)\left(-(A_{h,k}+J^h_k)\right)^{\epsilon}\right\Vert_{L(H)}\nonumber\\
&&\times\left\Vert\left(-(A_{h,k}+J^h_k)\right)^{-\epsilon}\left(L^h_k\right)'\left(t_k+\frac{\Delta t}{2}\right)\right\Vert\nonumber\\
&\leq& C\Delta t^3t_k^{-1+\beta/2}.
\end{eqnarray}
Note that $J_2$ can be recast in two terms as follows
\begin{eqnarray}
\label{cast1}
J_2&=&-\sum_{k=0}^{m-1}S^h_{m-1,k+1}\varphi_1\left(\Delta t\left(A_{h,k}+J^h_k\right)\right)\delta^{(1)h}_{k+1}\nonumber\\
&-&\sum_{k=0}^{m-1}S^h_{m-1,k+1}\varphi_1\left(\Delta t\left(A_{h,k}+J^h_k\right)\right)\delta^{(2)h}_{k+1}\nonumber\\
&=:& J_{21}+J_{22}.
\end{eqnarray}
Using \lemref{bonlemma}, \eqref{very1},  \lemref{stabilite}, \lemref{lemmastig} and \lemref{lemmestrategie2} it holds that
\begin{eqnarray}
\label{cast2}
&&\Vert J_{21}\Vert\nonumber\\
&\leq& \sum_{k=0}^{m-1}\left\Vert S^h_{m-1,k+1}(-A_h(0))^{1-\epsilon/2}\right\Vert_{L(H)}\nonumber\\
&&\times\left\Vert(-A_h(0))^{-1+\epsilon/2}\varphi_1\left(\Delta t\left(A_{h,k}+J^h_k\right)\right)\left(-(A_{h,k}+J^h_k)\right)^{1+\epsilon/2}\right\Vert_{L(H)}\nonumber\\
&&\times \left\Vert \left(-(A_{h,k}+J^h_k)\right)^{-1-\epsilon/2}\delta^{(1)h}_{k+1}\right\Vert\nonumber\\
&\leq& C\Delta t^{3}\sum_{k=0}^{m-1}t_{m-k-1}^{-1+\epsilon}t_{k+1}^{-1+\beta/2}\left\Vert(-A_h(0))^{-1+\epsilon/2}\left(-(A_{h,k}+J^h_k)\right)^{1-\epsilon/2}\right\Vert_{L(H)} \nonumber\\
&&\times\left\Vert\left(-(A_{h,k}+J^h_k)\right)^{-1+\epsilon/2}\varphi_1\left(\Delta t\left(A_{h,k}+J^h_k\right)\right)\left(-(A_{h,k}+J^h_k)\right)^{1+\epsilon/2}\right\Vert_{L(H)}\nonumber\\
&\leq& C\Delta t^{3-\epsilon}\sum_{k=0}^{m-1}t_{m-k-1}^{-1+\epsilon}t_{k+1}^{-1+\beta/2}\nonumber\\
&\leq& C\Delta t^{2-\epsilon}.\Delta t\sum_{k=0}^{m-1}t_{m-1-k}^{-1+\epsilon}t_{k+1}^{-1+\beta/2}\nonumber\\
&\leq& C\Delta t^{2-\epsilon}t_{m-1}^{-1+\beta+\epsilon}\leq C\Delta t^{2-\epsilon} \Delta t^{-1+\beta/2}\leq C\Delta t^{1+\beta/2-\epsilon}.
\end{eqnarray}
Using \lemref{bonlemma}, \eqref{fact2} and \lemref{stabilite}, it holds that
\begin{eqnarray}
\label{cast3a}
\Vert J_{22}\Vert&\leq& \sum_{k=0}^{m-1}\left\Vert S^h_{m-1,k+1}(-A_h(0))^{1-\epsilon}\right\Vert_{L(H)}\nonumber\\
&&\times\left\Vert(-A_h(0))^{-1+\epsilon}\varphi_1\left(\Delta t\left(A_{h,k}+J^h_k\right)\right)\left(-(A_{h,k}+J^h_k)\right)\right\Vert_{L(H)}\nonumber\\
&&\times \left\Vert \left(-(A_{h,k}+J^h_k)\right)^{-1}\delta^{(2)h}_{k+1}\right\Vert\nonumber\\
&\leq& C\Delta t^3\sum_{k=0}^{m-1} t_{k+1}^{-2+\beta/2}\left\Vert(-A_h(0))^{-1+\epsilon}\left(-(A_{h,k}+J^h_k)\right)^{1-\epsilon}\right\Vert_{L(H)}\nonumber\\
&&\times\left\Vert\left(-(A_{h,k}+J^h_k)\right)^{-1+\epsilon}\varphi_1\left(\Delta t\left(A_{h,k}+J^h_k\right)\right)\left(-(A_{h,k}+J^h_k)\right)\right\Vert_{L(H)}\nonumber\\
&\leq& C\Delta t^3\sum_{k=0}^{m-1}\Delta t^{-\epsilon} t_{k+1}^{-2+\beta/2}\nonumber\\
&\leq&C\Delta t^{2-\epsilon}\,\Delta t\sum_{k=0}^{m-1}t_{k+1}^{-2+\beta/2}.
\end{eqnarray}
Note that 
{\small
\begin{eqnarray}
\label{cast3b}
\Delta t\sum_{k=0}^{m-1}t_{k+1}^{-2+\beta/2}=\Delta t^{-1+\beta/2}\sum_{k=0}^{m-1}(k+1)^{-2+\beta/2}=\Delta t^{-1+\beta/2}\sum_{k=1}^mk^{-2+\beta/2}.
\end{eqnarray}
}
The sequence $v_k=k^{-2+\beta/2}$ is decreasing. Therefore, by comparison with the integral we have
\begin{eqnarray}
\label{cast3c}
\sum_{k=1}^mv_k=\sum_{k=1}^mk^{-2+\beta/2}\leq 1+\int_1^mt^{-2+\beta/2}dt\leq 1+Cm^{-1+\beta/2}.
\end{eqnarray}
Substituting \eqref{cast3c} in \eqref{cast3b} yields
\begin{eqnarray}
\label{cast3d}
\Delta t\sum_{k=0}^{m-1}t_{k+1}^{-2+\beta/2}\leq C\Delta t^{-1+\beta/2}+Ct_m^{-1+\beta/2}.
\end{eqnarray}
Substituting \eqref{cast3d} in \eqref{cast3a} yields
\begin{eqnarray}
\label{cast3}
\Vert J_{22}\Vert\leq C\Delta t^{1+\beta/2-\epsilon}+C\Delta t^{2-\epsilon}t_m^{-1+\beta/2}\leq C\Delta t^{1+\beta/2-\epsilon}.
\end{eqnarray}
Substituting \eqref{cast3} and \eqref{cast2} in \eqref{cast1} yields 
\begin{eqnarray}
\label{cast4}
\Vert J_2\Vert\leq \Vert J_{21}\Vert+\Vert J_{22}\Vert\leq C\Delta t^{1+\beta/2-\epsilon}.
\end{eqnarray}
Substituting \eqref{cast4} in \eqref{cast0} yields
\begin{eqnarray}
\label{cast5}
\Vert e^h_m\Vert\leq C\Delta t^{1+\beta/2-\epsilon}+C\Delta t\sum_{k=0}^{m-1}\Vert e^h_k\Vert.
\end{eqnarray}
Applying the discrete Gronwall's inequality to \eqref{cast5} yields
\begin{eqnarray}
\Vert e^h_m\Vert\leq C\Delta t^{1+\beta/2-\epsilon}.
\end{eqnarray}
This completes the proof of \thmref{mainresult1}.
\section{Numerical simulations}
\label{numericalexperiment}
We consider the following  reactive  advection
diffusion reaction  with diagonal difussion tensor  
\begin{eqnarray}
\label{reactiondif1}
 \dfrac{\partial u}{\partial t}=\left[D(t) \left(\varDelta u-\nabla \cdot(\mathbf{v}u)\right)+\dfrac{e^{-t} u}{\vert u\vert +1}\right],
\end{eqnarray}
with  mixed Neumann-Dirichlet boundary conditions on $\Lambda=[0,L_1]\times[0,L_2]$. 
The Dirichlet boundary condition is $u=1$ at $\Gamma=\{ (x,y) :\; x =0\}$ and 
we use the homogeneous Neumann boundary conditions elsewhere. The initial solution is $u(0)=0$.
To check our theoritical result in \thmref{mainresult1}, we use $D(t)=1+e^{-t}$.
For comparaison with current exponential  Rosenbrock method \cite{Antjd2} for constant operator $A$, we have taken $D(t)=1$.
In \figref{FIGII}, we will use the following notations
\begin{itemize}
 \item 'Magnus-Rosenbrock'  is used for  the errors graph  of the  Magnus  Rosenbrock  scheme 
 for the nonautonomous equation \eqref{reactiondif1}  corresponding to the coefficient $D(t)=1+e^{-t}$.
 \item  'C-Magnus-Rosenbrock' is  used for the errors graph of the novel Magnus  Rosenbrock  scheme for  fixed coefficient $D(t)=1$ in  \eqref{reactiondif1}
 (constant operator linear operator).
 \item 'Exponential-Rosenbrock' is used for the errors graph for the second order exponential Euler Rosenbrock scheme \cite{Antjd2} for fixed coefficient $D(t)=1$ 
 in \eqref{reactiondif1}(constant operator linear operator).
 \end{itemize}
 In all graphs, the reference solution or 'exact solution' is numerical solution with the smaller time step $\Delta t= 1/4096$.
   The linear operator $A(t)$ is given by 
\begin{eqnarray}
A(t)=(1+e^{-t})\left(\varDelta(.)-\nabla. \mathbf{v}(.)\right),\quad t\in[0, T],
\end{eqnarray}
where $\mathbf{v}$ is the Darcy velocity obtained as in \cite[Fig 6]{Antonionews}.
Clearly $\mathcal{D}(A(t))=\mathcal{D}(A(0))$, $t\in[0, T]$ and $\mathcal{D}((-A(t))^{\alpha})=\mathcal{D}((-A(0))^{\alpha})$, $t\in[0, T]$, $0\leq \alpha\leq 1$. 
The  function $q_{ij}(x,t)$ defined in \eqref{family} is given by $q_{ii}(x,t)=1+e^{-t}$, and $q_{ij}(x,t)=0,\, i\neq j$ . Since $q_{ii}(x,t)$ is bounded below by $1+e^{-T}$, 
it follows that the ellipticity  condition \eqref{ellip} holds and therefore as a consequence of \secref{numscheme}, 
it follows that $A(t)$ is sectorial. Obviously \assref{assumption2} is fulfills. 
  The nonlinear function $F$ is given by $F(t,v)= \dfrac{e^{-t} v}{1+\vert v\vert}$, $t\in[0, T]$, $v\in H$ and obviously satisfies \assref{assumption3}.  Let $f:[0, T]\times \Lambda\times\mathbb{R}\longrightarrow \mathbb{R}$  be  defined by $f(t, x, z)=\frac{e^{-t}z}{1+\vert z\vert}$.  We take $F :[0, T]\times H\longrightarrow H$ to be the Nemytskii operator defined as follows
 \begin{eqnarray}
 (F(t, v))(x)=f(t, x, v(x)),\quad t\in[0, T],\quad x\in \Lambda,\quad v\in H.
 \end{eqnarray}
 One can easily check that
 \begin{eqnarray}
 \frac{\partial f}{\partial z}(t, x, z)=-\frac{e^{-t}\vert z\vert}{(1+\vert z\vert)^2},\quad (t, x, z)\in[0, T]\times\Lambda\times\mathbb{R}.
 \end{eqnarray}
 Therefore
 \begin{eqnarray}
 (F'(t, v))(u(x))=\frac{\partial f}{\partial z}(t, x, v(x)).u(x)=-\frac{e^{-t}\vert v(x)\vert}{(1+\vert v(x)\vert)^2}.u(x).
 \end{eqnarray}
One can easily check that 
\begin{eqnarray}
\label{deriveebornee}
\frac{e^{-t}\vert v(x)\vert}{(1+\vert v(x)\vert)^2}\leq \frac{e^{-t}\vert v(x)\vert}{1+\vert v(x)\vert}\leq e^{-t}\leq C,\quad t\in[0, T],\quad x\in \Lambda,\quad v\in H.
\end{eqnarray}
Therefore, it holds that
\begin{eqnarray}
\left\Vert \frac{\partial F}{\partial u}(t,u)\right\Vert_{L(H)}\leq C,\quad -\left\langle F'(t, u)v, v\right\rangle_H\geq 0,\quad t\in[0, T],\quad u,v \in H.
\end{eqnarray}
One can also obviously prove that
\begin{eqnarray}
\left\Vert\frac{\partial^kF }{\partial t\partial u}(t,u)\right\Vert_{L(H)}\leq C,\quad\left\Vert\frac{\partial^2F}{\partial u^2}(t,u)\right\Vert_{L(H\times H; H)}\leq C,\nonumber
\end{eqnarray}
for all $t\in[0, T]$ and $u\in H$. Hence \assref{assumption3} is fulfilled. 
%and hence
%\begin{eqnarray}
%\Vert F'(v)u\Vert\leq C\Vert u\Vert,\quad u\in H.
%\end{eqnarray}
%Using the boundedness of $(-A(0))^{-\gamma}$ and \eqref{deriveebornee}, it follows that
%\begin{eqnarray}
%\left\Vert (-A(0))^{-\alpha}F'(t,v)(-A(0))^{\gamma}u\right\Vert&\leq &\Vert (-A(0))^{-\gamma}F\Vert_{L(H)} \Vert F'(v)(-A(0))^{\gamma}u\Vert\nonumber\\
%&\leq& C\Vert (-A(0))^{\gamma}u\Vert\leq \Vert u\Vert_{\gamma}.
%\end{eqnarray}
%Using the definition of the norm application $\Vert .\Vert_{L(X, Y)}$, where $X$ and $Y$ are two Banach spaces yields 
%\begin{eqnarray}
%\Vert (-A(0))^{-\gamma}F'(v)(-A(0))^{\gamma}\Vert_{L\left(\mathcal{D}(-A(0))^{\gamma}, H\right)}\leq C,\quad v\in H,\quad \gamma\in[0, 1].
%\end{eqnarray}

\begin{figure}[!ht]
 \begin{center}
 \includegraphics[width=0.60\textwidth]{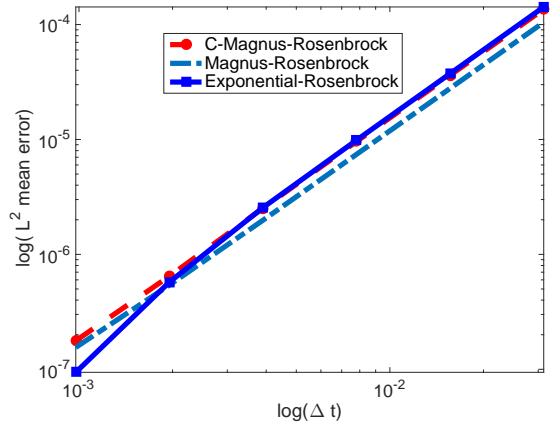}
  \end{center}
 \caption{Convergence of the Magnus  Rosenbrock  scheme  at final time  $T=1$.  For constant coefficient $D(t)=1$, we have
 compared the Magnus  Rosenbrock scheme with  the second order exponential Euler Rosenbrock scheme \cite{Antjd2}.
 The order of convergence in time  is $1. 92$ Magnus  Rosenbrock  scheme (with  $D(t)=1+e^{-t}$), $1.95$ for  the 
 Magnus  Rosenbrock  scheme (with  $D(t)=1$)and  $2.08$  for the second order exponential Euler Rosenbrock scheme.
 }
 \label{FIGII}
 \end{figure}
 In \figref{FIGII}, we can observe the convergence of the Magnus  Rosenbrock scheme ($D(t)=1+e^{-t}$ and   $D(t)=1$), 
 and the second order exponential Euler Rosenbrock scheme ($D(t)=1$). The order of convergence in time  is $1. 92$  for Magnus Rosenbrock  
 scheme ($D(t)=1+e^{-t}$), $1.95$ for  the  Magnus  Rosenbrock  scheme ($D(t)=1$) and $2.08$  for the second order exponential Euler Rosenbrock scheme ($D(t)=1$).
As we can also observe, the convergence orders in time  of  the Magnus  Rosenbrock  scheme are well in agreement  
with our theoretical result in \thmref{mainresult1} as the theoretical order  is $2$
 with order reduction $\epsilon$, which is very small here.
%\begin{figure}[!ht]e 
 
\end{document}